\begin{document}

\newcommand{\wk}{\mbox{$\,<$\hspace{-5pt}\footnotesize )$\,$}}

\numberwithin{equation}{section}
\newtheorem{teo}{Theorem}
\newtheorem{lemma}{Lemma}

\newtheorem{coro}{Corollary}
\newtheorem{prop}{Proposition}

\newtheorem{definition}{Definition}
\theoremstyle{remark}
\newtheorem{remark}{Remark}

\newtheorem{scho}{Scholium}
\newtheorem{open}{Question}
\newtheorem{example}{Example}
\numberwithin{example}{section}
\numberwithin{lemma}{section}
\numberwithin{prop}{section}
\numberwithin{teo}{section}
\numberwithin{definition}{section}
\numberwithin{coro}{section}
\numberwithin{figure}{section}
\numberwithin{remark}{section}
\numberwithin{scho}{section}

\bibliographystyle{abbrv}

\title{Concepts of curvatures in normed planes}
\date{}
\author{Vitor Balestro\footnote{Corresponding author} \\ CEFET/RJ Campus Nova Friburgo \\ 28635000 Nova Friburgo - RJ \\ Brazil \\ vitorbalestro@id.uff.br \and Horst Martini \\ Fakult\"{a}t f\"{u}r Mathematik \\ Technische Universit\"{a}t Chemnitz \\ 09107 Chemnitz\\ Germany \\ martini@mathematik.tu-chemnitz.de \and  Emad Shonoda \\ Department of Mathematics \& Computer Science \\ Faculty of Science \\ Port Said Unversity \\42521 Port Said\\ Egypt \\ en$\_$shonoda@yahoo.de}

\maketitle

\epigraph{\emph{``Everyone knows what a curve is, until he has studied enough
Mathematics to become confused through the countless number
of possible exceptions."}}{Felix Klein}

\begin{abstract}
The theory of classical types of curves in normed planes is not strongly developed. In particular, the knowledge on existing concepts of curvatures of planar curves is widespread and not systematized in the literature. Giving a comprehensive overview on geometric properties of and relations between all introduced curvature concepts, we try to fill this gap. Certainly, this yields a basis for further research and also for possible extensions of the whole existing framework. In addition, we derive various new results referring in full broadness to the variety of known curvature types in normed planes. These new results involve characterizations of curves of constant curvature, new characterizations of Radon planes and the Euclidean subcase, and analogues to classical statements like the four vertex theorem and the fundamental theorem on planar curves. We also introduce a new curvature type, for which we  verify corresponding properties. As applications of the little theory developed in our expository paper, we study the curvature behaviour of curves of constant width and obtain also new results on notions like evolutes, involutes, and parallel curves.
\end{abstract}

\noindent\textbf{MSC 2010 Classification} 26B15, 46B20, 51M25, 51N25, 52A10, 52A21, 53A04, 53A35.\\

\noindent\textbf{Keywords} anti-norm, Birkhoff orthogonality, circular curvature, constant width, evolute, four vertex theorem, involute, Minkowski curvature, Minkowski geometry, normal curvature, normed plane, parallel curves, Radon plane.

\tableofcontents

\section{Introduction}

The generalization of notions from Euclidean geometry to non-Euclidean geometries having the Euclidean one as subcase very often yields the following situation: one fixed Euclidean notion has a multiplicity of extended analogues, since properties which naturally coincide in the initial (Euclidean) situation can occur in different generalized forms. Regarding Minkowski geometry (i.e., the geometry of finite dimensional real Banach spaces), a well known example is given by the large variety of orthogonality concepts in normed (= Minkowski) planes; another one is presented by the curvature types introduced (up to now) for the study of classical curves in normed planes. In the first case comprehensive surveys exist (see, e.g., \cite{alonso}), in the latter case these concepts are only widespread and partially hidden in the literature, and a systematic representation of their variety and relations to each other (as good starting point for further research) is still missing. We refer to the paper \cite{martiniandwu}, where this unsatisfying situation regarding curve theory in normed planes is already described, and another related and comprehensive source (even referring to non-symmetric metrics, called gauges) is \cite{Ja-Ma-Ri}. It is our goal to fill the gap outlined here for the particular viewpoint of curvature notions. In the present expository paper we want to present geometric properties of the different existing curvature types in normed planes, to shed light on the relations between them and even to develop, based on this, a little theory in modern terms. We also derive various new results related to these notions. More precisely, we study, compare and classify the notions of circular, normal, and Minkowski curvature of curves embedded into normed planes. These concepts of curvatures in normed planes were introduced and treated by authors such as Biberstein \cite{biberstein}, Busemann \cite{Bus3}, Ghandehari \cite{Ghan1,Ghan2}, and Petty \cite{Pet}. Referring to these extensions of the Euclidean curvature, we characterize curves of constant curvatures, present characterizations of Radon planes or of the Euclidean subcase, and establish analogues of classical statements like the four vertex theorem and the fundamental theorem on planar curves. Completing the whole picture, we introduce a new type of curvature via arc length. Also for this curvature concept we prove results of the same kind as mentioned above. It is also our aim to unify and modernize the methods that can be used in this little field. As applications of the developed machinery, we get also results on curvature properties of curves of constant width, and we also study notions like evolutes, involutes, and parallel curves from the viewpoint of singularity theory, characterizing the involved concepts as singular points of certain maps.\\

In the mentioned papers \cite{Bus3} and \cite{Pet}, Busemann and Petty use an auxiliary Euclidean structure to deal with the so-called \emph{isoperimetrices}, which are solutions of the isoperimetric problem. Such solutions present homothety classes, and their representatives are said to be \emph{normalized} if their perimeters equal twice their area (see also \cite[Section 4.4]{thompson}). We follow the approach given in \cite{martiniantinorms} where, in the planar setting, the new norm is induced by the isoperimetrix taken as unit circle. \\

Roughly speaking, Biberstein's approach to define curvature is based on measuring the variation (in the sense of rotation) of the tangent vector with respect to the arc length of the curve. He re-obtains Petty's \emph{Minkowski curvature} (see again \cite{biberstein} and \cite{Pet}). Similarly, Biberstein defines his \emph{anti-curvature} which, roughly speaking, describes the variation of the rotation of the \emph{normal} vector with respect to the arc length (``normal" has not a unique meaning in Minkowski geometry, and we shall explain later what we mean here). Taylor \cite[Sections 3.3 and 3.4]{taylor} also studied these concepts. It turns out that, as we shall see, Biberstein's anti-curvature is Petty's \emph{isoperimetric curvature}. Using this machinery, both authors define (in the same way) analogues to Euclidean Frenet formulas. The same curvature types and formulas were also obtained in \cite{haddou}. The new curvature type, introduced in this paper via arc length and already mentioned, turns out to be precisely the \emph{Minkowski curvature in the anti-norm}. Also, Gage \cite{gage1} noticed that a certain curvature motion in a relative geometry can be regarded as a motion by the isoperimetric curvature of a Minkowski geometry (see also \cite{gage2}). \\

Petty \cite{Pet} defines a further concept that he called \emph{circular curvature}. As we will see, this can be regarded as the inverse of the radius of the \emph{osculating circle} (which is a 2nd-order contact Minkowski circle attached to the curve at the considered point). This curvature type was also used by Craizer \cite{craizer}, Craizer et al. \cite {Cra-Tei-Ba} and Ghandehari \cite{Ghan1,Ghan2}. Guggenheimer \cite{Gug1} works with an osculating anti-circle, obtaining, as we shall see, the isoperimetric curvature. A related discrete framework appears in \cite{craizermartini} and \cite{Tei-Cra-Ba}.

\section{Curvature in the Euclidean plane}\label{curvatureeuclid}

We want to describe the curvature concepts in normed planes in a way to emphasize their geometric meaning. For this sake, we shall start by taking a look at the usual curvature of curves in the Euclidean plane. Intuitively, by curvature the rotation of the tangent field with respect to the traveled length along the curve is measured (see Figure \ref{bigsmallcurvature}).

\begin{figure}[h]
\centering
\includegraphics{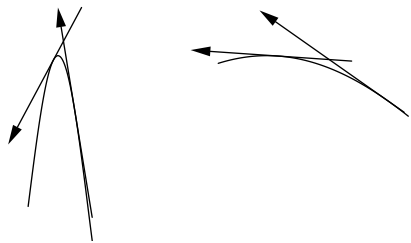}
\caption{``Large" and ``small" curvatures.}
\label{bigsmallcurvature}
\end{figure}

In the Euclidean plane (with usual fixed orientation), we may parametrize the studied curve by arc length, and the ``amount of rotation" can be measured considering the area swept by the tangent field when identified within the unit circle (Figure \ref{euclidcurvature}). \\

\begin{figure}[h]
\centering
\includegraphics{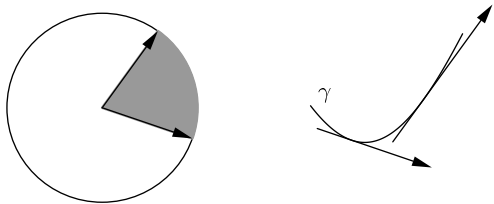}
\caption{The swept area by the curve's tangent field.}
\label{euclidcurvature}
\end{figure}

Formally, we define the curvature as follows: let $\gamma:[0,l]\rightarrow\mathbb{R}^2$ be a smooth curve parametrized by the \emph{arc length} $s$, and let $u:[0,l]\rightarrow\mathbb{R}$ be the function which associates to each $s \in [0,l]$ twice the (signed) value of the area of the sector between $\gamma'(0)$ and $\gamma'(s)$, where the symbol $'$ always denotes the derivative with respect to $s$. Then we define the (signed) curvature $k(s)$ of $\gamma$ at $\gamma(s)$ to be
\begin{align*} k(s) := u'(s), \ \ s \in [0,l].
\end{align*}
This definition can be rewritten in other terms. In \cite{manfredo}, for example, the curvature is regarded as the (signed) length of the derivative of the (unit) tangent field. Since $\gamma'(s)$ is a point of the unit circle for each $s \in [0,l]$, it follows that $\gamma''(s)$ points in the direction normal to $\gamma'(s)$. Letting $n(s)$ be the positively oriented normal vector to $\gamma'(s)$ of unit length, we write
\begin{align*} \gamma''(s) = k(s)n(s), \ \ s \in [0,l].
\end{align*}
And to verify that both definitions coincide, one just has to let $\varphi:[0,2\pi]\rightarrow\mathbb{R}^2$ be a (positively oriented) parametrization of the unit circle by twice the sector area (which is also an arc-length parametrization). Considering $u(s)$ as above, writing $\varphi(u(s))=\gamma'(s)$ and differentiating, we have
\begin{align*} \gamma''(s) = u'(s)\frac{d\varphi}{du} = u'(s)n(s),
\end{align*}
as we wished. Here one should notice that the derivative of the normal vector field $n(s)$ can be written as $n'(s) = -k(s)\gamma'(s)$. This can be intuitively understood by remembering the fact that the area determined in the unit circle by the unit normal field $n(s)$ equals the area determined by the unit tangent field $\gamma'(s)$. As we will see later, the differentiations of the tangent and the normal field do not necessarily yield the same curvature in an arbitrary normed plane. \\

There is a third way to obtain curvature in the Euclidean plane, which will later give birth to a certain curvature type in the normed plane (namely, the \emph{circular curvature}). Actually, we obtain the \emph{curvature radius} of a curve in each point. Let $\gamma:[0,l]\rightarrow \mathbb{R}^2$ be a smooth curve parametrized by the arc length $s$, and let $\sigma:[0,2\pi]\rightarrow\mathbb{R}^2$ be an arc-length (or twice the sector area) parametrization of the unit circle (we call this parameter $u$). Assume that $\gamma$ does not contain segments. Then we may locally consider a re-parametrization $u = s(u)$ of $\gamma$ in such a way that the tangent vector to $\gamma$ at $\gamma(s(u))$ points in the direction (and orientation) of $\sigma'(u)$. We write 
\begin{align*} \frac{d}{du}\gamma(s(u)) = \rho(u)\sigma'(u).
\end{align*}
Geometrically, we are fitting a circle (with radius $\rho(u)$), attached at the point $\gamma(s(u))$ of the curve, in such a way that the tangent vectors of the curve and of the circle coincide at this point. The number $\rho(u)$ is called the \emph{curvature radius} of $\gamma$ at $s(u)$, and we claim that
\begin{align*} k(s(u)) = \frac{1}{\rho(s(u))},
\end{align*}
where $k(s(u))$ is, as usual, the curvature of $\gamma$ at $s(u)$. Indeed,
\begin{align*} \rho(u)\sigma'(u) = \frac{d}{du}\gamma(s(u)) = \frac{ds}{du}(u)\frac{d\gamma}{ds}(s(u)),
\end{align*}
and since both $\sigma'(u)$ and $\frac{d\gamma}{ds}(s(u))$ are unit, we have
\begin{align*} \frac{1}{\rho(u)} = \frac{du}{ds}(s(u)).
\end{align*}
The latter is, by definition, the curvature of $\gamma$ at $s(u)$. The circle of radius $\rho(u)$ and passing through $\gamma(s(u))$ in the described way is called the \emph{osculating circle} (or \emph{circle of curvature}) of $\gamma$ at $\gamma(s(u))$. Figure \ref{osculating} illustrates this construction. \\

\begin{figure}[h]
\centering
\includegraphics{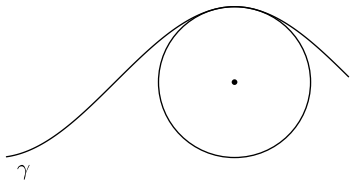}
\caption{An osculating circle of $\gamma$.}
\label{osculating}
\end{figure}

We finish this section by enunciating the fundamental theorem of the local theory of curves. Later this will inspire similar theorems for curvatures in normed planes.

\begin{teo}\label{fundamental} Let $k:[0,c]\rightarrow\mathbb{R}^2$ be a $C^1$ function. Then there exists a curve whose curvature function in the arc-length parameter is $k$. Such a curve is unique up to the choice of the initial point and the initial tangent vector.
\end{teo}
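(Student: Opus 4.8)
The plan is to reduce the statement to two applications of the fundamental theorem of calculus, after re-encoding the curvature as the derivative of the \emph{turning angle} of the tangent field (here $k$ is of course real-valued, as the curvature is a scalar). For existence, I would fix an initial point $p_0\in\mathbb{R}^2$ and an initial unit tangent vector $v_0=(\cos\theta_0,\sin\theta_0)$, and then set
\[
\theta(s):=\theta_0+\int_0^s k(t)\,dt,\qquad \gamma(s):=p_0+\int_0^s\bigl(\cos\theta(t),\sin\theta(t)\bigr)\,dt,\qquad s\in[0,c].
\]
Since $k$ is $C^1$ (continuity already suffices), $\theta$ is $C^1$ and $\gamma$ is $C^2$; moreover $\gamma'(s)=(\cos\theta(s),\sin\theta(s))$ has unit Euclidean length, so $s$ is indeed the arc-length parameter of $\gamma$. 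Differentiating once more gives $\gamma''(s)=\theta'(s)(-\sin\theta(s),\cos\theta(s))=k(s)n(s)$, where $n(s)$ is the positively oriented unit normal to $\gamma'(s)$; by the identity $\gamma''=kn$ recalled above, this says precisely that the curvature of $\gamma$ in the arc-length parameter is the prescribed function $k$. One should also check consistency with the primary definition $k=u'$: for the Euclidean unit circle, twice the signed sector area between two unit vectors is exactly the signed angle between them, so $u(s)=\theta(s)-\theta(0)$ and $u'(s)=\theta'(s)=k(s)$.

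For uniqueness, suppose $\gamma_1,\gamma_2:[0,c]\to\mathbb{R}^2$ are two arc-length parametrized curves with the same curvature function $k$, the same initial point $\gamma_1(0)=\gamma_2(0)$, and the same initial tangent $\gamma_1'(0)=\gamma_2'(0)$. Each $\gamma_i'$ is a $C^1$ map into the Euclidean unit circle, hence lifts to a $C^1$ angle function $\theta_i$ with $\gamma_i'=(\cos\theta_i,\sin\theta_i)$ (standard lifting argument), and the curvature hypothesis forces $\theta_i'=k$ on $[0,c]$. Thus $\theta_1-\theta_2$ is constant; evaluating at $s=0$ and using that the initial tangents agree gives $\theta_1\equiv\theta_2 \pmod{2\pi}$, hence $\gamma_1'\equiv\gamma_2'$, and integrating from the common initial point yields $\gamma_1\equiv\gamma_2$. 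Without normalizing the initial data, two solutions for the same $k$ differ by the orientation-preserving rigid motion (a rotation followed by a translation) carrying the initial frame of one to that of the other, which is the asserted uniqueness "up to the choice of the initial point and the initial tangent vector".

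There is essentially no deep obstacle here: the content is the scalar linear first-order equation $\theta'=k$ together with one quadrature, and the regularity assumed on $k$ is more than enough to justify all differentiations. The only point that genuinely warrants care — and is worth isolating, since it becomes the crux in the normed setting treated in the following sections — is the verification that the turning-angle description used in the proof really coincides with the swept-area definition of curvature stated at the beginning of this section; I would phrase the argument so as to make this equivalence explicit, as a template for the parallel discussion of circular, normal, and Minkowski curvature, where these descriptions no longer agree.
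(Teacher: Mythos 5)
Your proof is correct, and it takes a genuinely different route from the paper's. The paper does not integrate explicitly: it writes the Frenet equations as the linear first-order system $x''=-ky'$, $y''=kx'$ in the unknowns $(x',y')$, observes that $\|\gamma'\|^2$ is automatically constant along any solution, and then invokes standard ODE existence and uniqueness theory with the initial data $(x'(0),y'(0))=v$. You instead solve that system in closed form via the turning angle, $\theta=\theta_0+\int_0^s k$ followed by one more quadrature, and handle uniqueness by the lifting argument rather than by Picard--Lindel\"{o}f. What your version buys is elementarity and an explicit formula (and it makes visible that continuity of $k$ already suffices, so the $C^1$ hypothesis is not needed here); your closing remark about reconciling the turning-angle description with the swept-area definition $k=u'$ is a worthwhile addition that the paper leaves implicit. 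What the paper's formulation buys is that it is phrased directly in terms of the Frenet equations $\gamma''=kn$, $n'=-k\gamma'$, which is the shape of the data available in a general normed plane where no global angle function plays this role --- though in the end the normed-plane fundamental theorem is proved by converting each curvature type into a prescribed Euclidean curvature and citing this theorem, so either proof would serve as the base case. Both arguments establish uniqueness only up to an orientation-preserving rigid motion matching the initial frames, which is exactly the uniqueness asserted.
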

\begin{proof} Finding a curve $\gamma:[0,c]\rightarrow \mathbb{R}^2$ parametrized by arc length and with curvature $k$ is equivalent to solving the system of ordinary differential equations
\begin{align*} \left\{\begin{array}{ll} x''(s) = -k(s)y'(s) \\ y''(s) = k(s)x'(s)\end{array}\right.,
\end{align*}
where $y(s) = (x(s),y(s))$ is the coordinate representation. Indeed, if the above equalities hold, then
\begin{align*} x'(s)x''(s) = -k(s)y'(s)x'(s) = -y'(s)y''(s),
\end{align*}
and therefore $||\gamma'(s)||^2$ is constant. This is a linear first order system in $(x'(s),y'(s))$, and hence existence and uniqueness are guaranteed by the standard ODE theory when we impose the initial condition $(x'(0),y'(0)) = v$, where $v$ is a unit vector. Since the tangent field $\gamma'(s)$ is now uniquely determined, it follows (again by the standard ODE theory) that the curve is uniquely determined up to the choice of an initial point. This concludes the proof.

\end{proof}

\section{Minkowski geometry in the plane (some background)}

This section is devoted to provide some necessary background on plane \emph{Minkowski geometry}, i.e., the geometry of two-dimensional real Banach spaces. Good references to the area subject are the surveys \cite{martini1} and \cite{martini2}. For the recent state of curve theory in such planes we refer the reader to \cite{martiniandwu}. A \emph{normed} (or \emph{Minkowski}) \emph{plane} is a two dimensional vector space $X$ endowed with a norm, and we shall denote it by $(X,||\cdot||)$. We define its \emph{unit ball} to be the set $B:=\{x \in X:||x|| \leq 1\}$, and the \emph{unit circle} is the boundary $S:=\{x \in X:||x||=1\}$ of $B$. It is clear that $B$ is a \emph{convex body} (i.e., a compact, convex set with interior points) symmetric with respect to the origin, and $S$ is a \emph{convex curve}. Throughout the text we shall always assume that $S$ is of class (at least) $C^2$ and is a \emph{strictly convex curve}, i.e., it bounds a convex region without segments in the boundary. A little disclaimer regarding names: usually, the word \emph{smooth} is used to describe a unit ball supported by a unique line at each point of its boundary. Since we are assuming that $S$ is always at least of class $C^2$, it follows that all unit balls considered are smooth. When it comes to curves, the same word usually denotes the ones which are infinitely many times differentiable. Here, whenever we say that a curve is \emph{smooth} we mean that it has the sufficient differentiability to all the involved derivatives making sense.  \\

In order to study curvatures, we need notions of length of curves and area measures. The \emph{length} of a curve $\gamma:[a,b]\rightarrow X$ is defined in the usual way by
\begin{align*} l(\gamma) = \sup_P\left(\sum_{j=1}^n||\gamma(t_j)-\gamma(t_{j-1})||\right),
\end{align*}
where the supremum is taken over all partitions $P:=\{a=t_0,t_1,...,t_n=b\}$ of $[a,b]$. If $l(\gamma) < +\infty$, we say that $\gamma$ is \emph{rectifiable}. An area measure in a normed plane is obtained by fixing a \emph{determinant form} (i.e., a symplectic non-degenerate bilinear form) $[\cdot,\cdot]:X\times X \rightarrow \mathbb{R}$. The choice of such a form is unique up to constant multiplication, and it fixes an area element plus an orientation in the plane. At the moment we will not consider any normalization of the area element, unlike Busemann \cite{Bus3} and Petty \cite{Pet}.  \\

Here the first structural difference between Euclidean geometry and Minkowski geometry appears, and it implies how we will deal with curvatures: one can clearly parametrize the unit circle by arc length and by twice the area of the sector, but the parametrizations will not necessarily coincide. As we shall see a little later, the area parametrization equals the arc-length parametrization of a different norm. The second difference is that we work with an orthogonality concept which is not necessarily symmetric. Given two vectors $x, y \in X$, we say that $x$ is \emph{Birkhoff orthogonal} to $y$ (denoted $x\dashv_B y$) whenever $||x|| \leq ||x+ty||$ for any $t \in \mathbb{R}$. Geometrically, this means that the supporting line of the unit ball at $x/||x||$ is parallel to $y$. \\

In their work on Minkowski geometry, Busemann and Petty used to regard the \emph{isoperimetrix} (the solution to the isoperimetric problem in a normed plane, see \cite{Bus6}) as an important structural object. This is mainly based on the fact that the isoperimetrix is a convex body which reverses Birkhoff orthogonality. We follow the approach given in \cite{martiniantinorms}, where the isoperimetrix is considered as the unit circle of a new norm in the plane. For a normed plane with a fixed determinant form $[\cdot,\cdot]$ we define the \emph{anti-norm} to be 
\begin{align*} ||x||_a := \sup\{|[x,y]|:y \in S\},
\end{align*}
where $x \in X$. It is known that the supremum is attained for $y \in S$ if and only if $y \dashv_B x$, and it is easy to see that $||\cdot||_a$ is indeed a norm in $X$ (see \cite{martiniantinorms}). We shall denote the unit ball and the unit circle of the anti-norm by $B_a$ and $S_a$, respectively. As mentioned, the anti-norm reverses Birkhoff orthogonality. In other words, if $x \dashv_B y$, then $y \dashv_B^a x$, where $ \dashv_B^a$ denotes the Birkhoff orthogonality in the anti-norm (see Figure \ref{circleanticircle}).

\begin{figure}[h]
\centering
\includegraphics{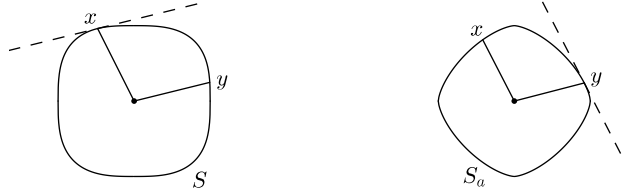}
\caption{The anti-norm reverses Birkhoff orthogonality.}
\label{circleanticircle}
\end{figure}

In the figure above, $x$ and $y$ present directions rather than vectors. This is due to the fact that Birkhoff orthogonality is homogeneous, and hence it can be regarded as a relation between directions rather than vectors. When dealing with a \emph{Radon plane} (see \cite{martiniantinorms}), we will always consider that $[\cdot,\cdot]$ is scaled in such a way that $||\cdot|| = ||\cdot||_a$ (as is well known, Radon planes are characterized as those normed planes where Birkhoff orthogonality is symmetric). \\

We denote the \emph{length of a curve $\gamma$ in the norm} by $l(\gamma)$, and \emph{in the anti-norm} by $l_a(\gamma)$. Also, the fixed determinant form induces an orientation and a standard area measure. We shall denote this area measure by $\lambda$. Let $\varphi(u):[0,2\lambda(B)]\rightarrow X$ be a positively oriented parametrization of the unit circle $S$ by twice the area of sectors (we shall use this notation throughout the text). We have
\begin{align*} u = \int_0^u[\varphi(t),\varphi'(t)] \ dt, \ \ u \in [0,2\lambda(B)].
\end{align*}
Differentiating with respect to $u$, we get
\begin{align*} 1 = [\varphi(u),\varphi'(u)] = ||\varphi(u)||\cdot||\varphi'(u)||_a = ||\varphi'(u)||_a,
\end{align*}
where the second equality follows from $\varphi(u) \dashv_B \varphi'(u)$. We observe here that a parametrization of the unit circle by means of area of sectors is a parametrization in the arc length with respect to the anti-norm. The norm and the anti-norm have a duality relation since the anti-norm of the anti-norm (in the same determinant form) is the original norm (see \cite{martiniantinorms}).  Therefore, the area parameter in the unit anti-ball equals the norm arc-length parameter in it. \\

From now on, when dealing with more than one parameter we shall reserve the symbol $'$ for the derivative with respect to the Minkowski arc length (which will be usually denoted by the letter $s$). Despite this agreement, we may use $'$ to denote the differentiation with respect to other parameters when there is no possibility of confusion.

\section{Defining curvature types in a normed plane} \label{secdefmink}

We present now the curvature concepts already defined in the literature for normed planes, and we explain how each of them is obtained by one of the perspectives described in Section \ref{curvatureeuclid}. \\

The first concept is called \emph{Minkowski curvature} by Petty in \cite{Pet}, and it is the very same that Biberstein called simply \emph{curvature} in \cite{biberstein}. This curvature concept is based on the first idea described in Section \ref{curvatureeuclid}: we study the variation of the area swept in the unit circle by the tangent vector.\\

Let $\gamma(s):[0,l(\gamma)]\rightarrow X$ be a smooth curve parametrized by arc length (in the norm). We identify the area swept by the tangent field $\gamma'$ by writing 
\begin{align}\label{tangentfield} \gamma'(s) = \varphi(u(s)), \ \ s \in [0,l(\gamma)].
\end{align}
In other words, the vector $\gamma'(s)$ is identified within the unit circle in the area sector parameter. The function $u(s):[0,l(\gamma)]\rightarrow X$ associates the length traveled on $\gamma$ with the area swept within $B$. Therefore, we define the \emph{Minkowski curvature} of $\gamma$ at $\gamma(s)$ to be
\begin{align}\label{minkowskicurvature} k_m(s) := u'(s), \ \ s \in [0,l(\gamma)].
\end{align}

In the Euclidean plane, we can also differentiate the tangent field in order to obtain the curvature. In a Minkowski plane we can do something similar, but we have more than one possible choice for the normal field (recall that Birkhoff orthogonality is not symmetric). Since the area parameter fixed in $S$ is the arc-length parameter in the anti-norm, we proceed as follows: define the \emph{right normal field} to $\gamma$ as the mapping $n_{\gamma}(s):[0,l(\gamma)]\rightarrow X$ which associates to each $s \in [0,l(\gamma)]$ the unique vector $n_{\gamma}(s)$ such that $\gamma'(s) \dashv_B n_{\gamma}(s)$ and $[\gamma'(s),n_{\gamma}(s)] = 1$. In other words, this last equality means that $n_{\gamma}(s)$ is unit in the anti-norm and that the basis $\{\gamma'(s),n_{\gamma}(s)\}$ is positively oriented. Differentiating equality (\ref{tangentfield}), we have
\begin{align} \label{frenet1} \gamma''(s) = u'(s)\frac{d\varphi}{du}(u(s)) = k_m(s)n_{\gamma}(s), \ \ s \in [0,l(\gamma)].
\end{align}
Notice that we are using the symbol $'$ to denote the derivative only with respect to $s$. Figure \ref{frenetframe} illustrates the situation.  \\

\begin{figure}[h]
\centering
\includegraphics{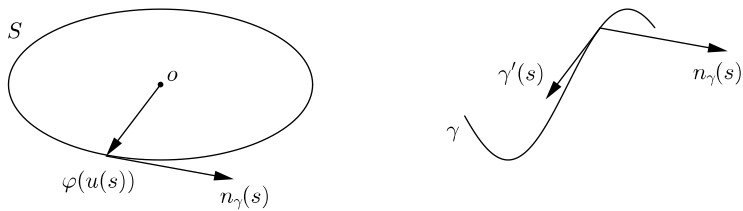}
\caption{The tangent and right normal fields of $\gamma$.}
\label{frenetframe}
\end{figure}

As long as the tangent field $\gamma'(s)$ varies through the unit circle, the right normal field $n_{\gamma}(s)$ varies through the unit anti-circle. Now these vector fields do not necessarily sweep the same area, and therefore they may yield different curvatures, unlike the Euclidean case. \\

To deal with the area swept in the unit anti-circle by the vector field $n_{\gamma}(s)$, let $\psi(v):[0,2\lambda(S_a)]\rightarrow X$ be a parametrization of the unit anti-circle by twice the area of the sector. For each $s \in [0,l(\gamma)]$ we let $v(s) \in [0,2\lambda(S_a)]$ be such that $n_{\gamma}(s) = \psi(v(s))$ and define the \emph{normal curvature} of $\gamma$ at $\gamma(s)$ to be the number
\begin{align} \label{normalcurvature} k_n(s) := v'(s), \ \ s \in [0,l(\lambda)].
\end{align}
This concept was called \emph{isoperimetric curvature} by Petty in \cite{Pet} (motivated by the fact that for any normed plane the anti-circle solves the isoperimetric problem) and \emph{anti-curvature} by Biberstein in \cite{biberstein}. Gage \cite{gage1} called it \emph{Minkowski curvature}, since he obtained, but did not work with, the concept named like this in our paper. Notice that due to duality we can re-obtain the normal curvature by differentiating $n_{\gamma}(s)$:
\begin{align}\label{frenet2} n_{\gamma}'(s) = v'(s)\frac{d\psi}{dv}(v(s)) = -k_n(s)\gamma'(s), \ \ s \in [0,l(\gamma)].
\end{align}
Notice that (\ref{frenet1}) and (\ref{frenet2}) can be regarded as \emph{Frenet formulas for a normed plane} (as pointed out in \cite{haddou}, \cite{gage1}, \cite{Pet}, and \cite{shonoda2}). From the last formula, notice carefully that $|k_n(s)| = ||n'_{\gamma}(s)||$ for all $s \in [0,l(\gamma)]$.\\

We continue now with defining \emph{circular curvature}. This concept was defined by Petty \cite{Pet}, and used later by Ghandehari \cite{Ghan1,Ghan2} and Craizer \cite{craizer}. To define it, let $\gamma(s):[0,l(\gamma)]\rightarrow X$ be a curve parametrized by arc length, and assume that $\varphi(t):[0,l(S)]\rightarrow X$ is a parametrization of the unit circle by arc length. We let $t(s)$ be the function which associates each $s \in [0,l(\gamma)]$ to the number $t(s) \in [0,l(S)]$ such that 
\begin{align*} \gamma'(s) = \frac{d\varphi}{dt}(t(s)).
\end{align*}
In other words, $t(s)$ is the length traveled along the unit circle when the vector field $\gamma'(s)$ varies as its tangent field (see Figure \ref{circcurv}).\\

\begin{figure}[h]
\centering
\includegraphics{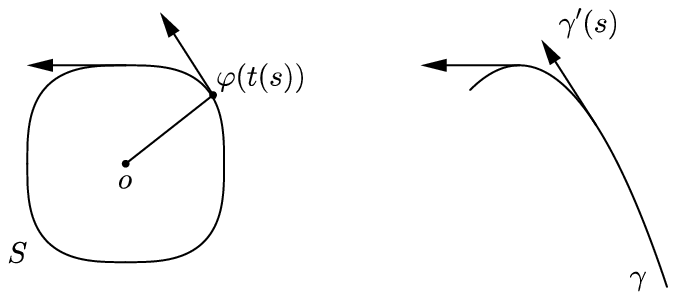}
\caption{$\gamma'(s) = \displaystyle\frac{d\varphi}{dt}(t(s))$.}
\label{circcurv}
\end{figure}

We define the \emph{circular curvature} of $\gamma$ measuring the variation of the length $t(s)$ with respect to $s$. Formally, the curvature of $\gamma$ at $\gamma(s)$ is given by
\begin{align*} k_c(s) := t'(s).
\end{align*}

In \cite{craizer}, Craizer defines the circular curvature inspired by contact geometry (see also \cite{craizermartini} for the discrete framework). The idea is to consider a Minkowski circle with second order contact in each point of the curve, and to define the curvature to be the inverse of the curvature radius. With this point of view it is also possible to define osculating circles. \\

Notice that the re-parametrization $\varphi(s):=\varphi(t(s))$ is such that the tangents to the unit circle at $\varphi(s)$ and to $\gamma$ at $\gamma(s)$ are parallel. Hence, since for each fixed $s_0 \in [0,l(\gamma)]$ we have
\begin{align}\label{eqcirc} \varphi'(s_0) = t'(s_0)\frac{d\varphi}{dt}(t(s_0)) = k_c(s_0)\gamma'(s_0),
\end{align}
it follows that the Minkowski circle 
\begin{align*} s \mapsto \gamma(s_0) - k_c(s_0)^{-1}\varphi(s_0) + k_c(s_0)^{-1}\varphi(s)
\end{align*}
has  second order contact with $\gamma$ at the point $\gamma(s_0)$. This translated circle is called the \emph{osculating circle} of $\gamma$ at $\gamma(s)$. The number $k_c(s)^{-1}$ is the \emph{curvature radius} of $\gamma$ at $\gamma(s)$. \\

It is worth saying that we have given an interpretation of circular curvature by means of contact geometry, but circular curvature in a Minkowski plane can indeed be directly defined on those lines. To do so, let $\varphi(\theta):[0,c]\rightarrow X$ be any regular parametrization of the unit circle, and let $\gamma$ be a smooth curve endowed with a parametrization $\gamma(\theta):[0,c]\rightarrow X$ such that for each $\theta \in [0,c]$ the vector $\gamma'(\theta)$ is a non-negative multiple of $\varphi'(\theta)$. In other words, we have a function $\rho:[0,c]\rightarrow[0,+\infty)$ for which
\begin{align*} \gamma'(\theta) = \rho(\theta)\varphi'(\theta), \ \ \theta \in [0,c].
\end{align*}
Of course, the number $\rho$ can be interpreted by a curvature radius, and we claim that indeed $k_c(\theta) = \rho(\theta)^{-1}$. To check this fact, let $\gamma(s):[0,l(\gamma)]\rightarrow X$ be an arc length parametrization of $\gamma$ and write
\begin{align*} \gamma'(s) = \theta'(s)\frac{d\gamma}{d\theta}(\theta(s)) = \theta'(s)\rho(\theta(s))\frac{d\varphi}{d\theta}(\theta(s)) = \rho(\theta(s))\varphi'(s).
\end{align*}
Thus, $\varphi'(s) = \rho(s)^{-1}\gamma'(s)$. From equality (\ref{eqcirc}) we get what we claimed. Notice carefully that this interpretation does not make sense for inflection points of a curve. We highlight now an interesting relation between circular and normal curvatures.

\begin{prop} The circular curvature is the normal curvature in the anti-norm.
\end{prop}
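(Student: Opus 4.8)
The plan is to unwind the three definitions in play---normal curvature, circular curvature, and the duality norm/anti-norm---and check that the recipe producing $k_n$ in the norm is literally the recipe producing $k_c$ in the anti-norm. The starting observation is the one already recorded in the excerpt: a parametrization of the unit circle $S$ by twice the sector area (the map $\varphi(u)$) is precisely an arc-length parametrization of $S$ with respect to the anti-norm, and dually the area parametrization $\psi(v)$ of the unit anti-circle $S_a$ is an arc-length parametrization of $S_a$ in the original norm. So when we compute circular curvature \emph{in the anti-norm}, the relevant auxiliary curve is $S_a$ parametrized by anti-norm-arc-length, which is $S_a$ parametrized by area, i.e.\ exactly the map $\psi(v)$ used to define $k_n$.

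Concretely, let $\gamma$ be a smooth curve and let $s$ denote its Minkowski (norm) arc length. First I would recall from (\ref{frenet1})--(\ref{frenet2}) the right normal field $n_\gamma(s)$ and the fact that, writing $n_\gamma(s) = \psi(v(s))$, one has $k_n(s) = v'(s)$. Next I would compute the anti-norm arc length of $\gamma$: since $n_\gamma(s) \dashv_B^a \gamma'(s)$ (the anti-norm reverses Birkhoff orthogonality) and $[\gamma'(s), n_\gamma(s)] = 1$, the vector $\gamma'(s)$ has anti-norm equal to... one needs the relation $\|\gamma'(s)\|_a \cdot \|n_\gamma(s)\| = [\gamma'(s), n_\gamma(s)] = 1$, but $\|n_\gamma(s)\|$ is the \emph{norm} length of the right normal, which need not be $1$; rather $\|n_\gamma(s)\|_a = 1$. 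The clean way around this is to parametrize $\gamma$ by its anti-arc-length $s_a$ and observe that $\frac{d\gamma}{ds_a}$ is the anti-norm-unit tangent, Birkhoff-anti-orthogonal to the appropriate normal direction, which in the anti-norm geometry means $\frac{d\gamma}{ds_a}$ is a positive multiple of the tangent to $S_a$ at the corresponding point---i.e.\ a multiple of $\psi'(v)$. Then by the direct definition of circular curvature (the $\rho(\theta)\varphi'(\theta)$ discussion at the end of Section \ref{secdefmink}, applied with the anti-norm in place of the norm and $\psi$ in place of $\varphi$), the circular curvature in the anti-norm equals $\rho^{-1}$ where $\frac{d\gamma}{ds_a} = \rho\,\psi'(v)$, and a short chain-rule computation identifies $\rho^{-1}$ with $v'(s_a)$. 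Since $v(s)$ and $v(s_a)$ describe the same function up to the monotone reparametrization $s \leftrightarrow s_a$, and since $k_n$ was defined as $\frac{dv}{ds}$ while we now get $\frac{dv}{ds_a}$, I must be careful that the normal curvature is itself defined using whichever arc length is ``native'' to the ambient geometry: in the anti-norm the native arc length is $s_a$, so $k_c^{(\text{anti})} = \frac{dv}{ds_a}$ is exactly the quantity that plays the role of normal curvature there.

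The main obstacle is bookkeeping the two arc-length parameters and making sure the roles of $S$, $S_a$, $\varphi$, $\psi$ get swapped consistently when we pass to the anti-norm: one has to use the self-duality (the anti-norm of the anti-norm is the norm) at the right moment, and one has to remember that the right normal field of $\gamma$ in the norm, viewed in the anti-norm, becomes (up to orientation) the anti-norm-unit tangent direction of a reparametrized $\gamma$---this is precisely where ``the anti-norm reverses Birkhoff orthogonality'' does the work. Once the dictionary $(\|\cdot\|, S, \varphi, n_\gamma, s) \leftrightarrow (\|\cdot\|_a, S_a, \psi, \gamma', s_a)$ is set up, both sides literally compute the derivative of $v$ with respect to the anti-arc-length, so the identity $k_c^a = k_n$ follows. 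I would close by remarking that, combined with the earlier observation identifying the new curvature type with ``Minkowski curvature in the anti-norm'', this proposition shows the quadruple (Minkowski, normal, circular, new) is closed under the norm$\leftrightarrow$anti-norm involution, with $k_m \leftrightarrow k_{\text{new}}$ and $k_c \leftrightarrow k_n$.
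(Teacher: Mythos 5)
Your plan --- prove the dual identity ``circular curvature in the anti-norm equals normal curvature'' by matching the tangent-matched point of $S_a$ with the right normal field $n_\gamma$ --- is sound, and it is essentially the paper's own argument run on the other side of the duality (the paper proves $k_c=k_n^a$ by identifying the anti-norm right normal field with the tangent-matched point of $S$). But your dictionary is wrong at the decisive entry, and the error propagates. The area parametrization $\psi(v)$ of $S_a$ is the arc-length parametrization of $S_a$ \emph{in the norm}, not in the anti-norm (the paper states this explicitly: the area parameter in the unit anti-ball equals the norm arc-length parameter in it). Hence $\frac{d\psi}{dv}$ is norm-unit while $\frac{d\gamma}{ds_a}$ is anti-norm-unit, and your equation $\frac{d\gamma}{ds_a}=\rho\,\psi'(v)$ is not an instance of the direct definition at the end of Section \ref{secdefmink}, which requires the curve and the circle to be differentiated with respect to one \emph{common} parameter. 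The $\rho$ you wrote down is in fact $ds/ds_a=\|\gamma'(s)\|_a^{-1}$, not a curvature radius, and the asserted identity $\rho^{-1}=v'(s_a)$ is false in general. You then land on $k_c^a=\frac{dv}{ds_a}$, notice that this is not $k_n=\frac{dv}{ds}$, and resolve the mismatch by declaring $\frac{dv}{ds_a}$ to be ``the quantity that plays the role of normal curvature'' in the anti-norm. That hybrid (area swept on $S_a$, differentiated with respect to the anti-norm arc length of $\gamma$) is none of the four defined curvatures and is not the $k_n$ of the proposition; your two sides differ by the factor $\|\gamma'(s)\|_a$.

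The missing step is exactly the cancellation of that factor, which is the heart of the paper's proof. Let $\tau$ be the genuine anti-norm arc-length coordinate on $S_a$, so that $k_c^a=\frac{d\tau}{ds_a}$ evaluated along the tangent-matched point. At that point $\psi(v(s))=\pm n_\gamma(s)$, the norm-unit tangent $\frac{d\psi}{dv}$ is parallel to the norm-unit vector $\gamma'(s)$ (both are Birkhoff-anti-orthogonal, on the correct side, to $n_\gamma(s)$, and such a direction is unique by smoothness), whence $\frac{d\tau}{dv}=\bigl\|\frac{d\psi}{dv}\bigr\|_a=\|\gamma'(s)\|_a$. Therefore
\begin{align*}
k_c^a\;=\;\frac{d\tau}{ds_a}\;=\;\frac{d\tau}{dv}\cdot\frac{dv}{ds}\cdot\frac{ds}{ds_a}\;=\;\|\gamma'(s)\|_a\cdot k_n(s)\cdot\frac{1}{\|\gamma'(s)\|_a}\;=\;k_n(s),
\end{align*}
up to the sign/antipode bookkeeping. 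This is the same cancellation that appears in the paper as $|s_a'(s)|\,|k_n^a(s_a)|=|k_c(s)|\,\|\gamma'(s)\|_a$ together with $s_a'(s)=\|\gamma'(s)\|_a$. Finally, make the last step explicit: you prove $k_c^a=k_n$, while the proposition asserts $k_c=k_n^a$; these are equivalent because the anti-norm of the anti-norm is the norm, but that application of the involution must be stated.
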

\begin{proof} Let $\gamma$ be a smooth curve, and assume that $\gamma(s):[0,l(\gamma)]\rightarrow X$ is a parametrization by arc length. As before, let $\varphi(t):[0,l(S)]\rightarrow X$ be a parametrization of the unit circle by arc length, and let $t(s):[0,l(\gamma)]\rightarrow[0,l(S)]$ be the function which associates each $s$ to the number $t(s)$ for which $\gamma'(s) = \frac{d\varphi}{dt}(t(s))$. By definition, $k_c(s) = t'(s)$ for each $s \in [0,l(\gamma)]$. \\

To obtain the normal curvature in the anti-norm, we follow the same procedure as already described, but taking the anti-norm instead of the norm as reference. Let $\gamma(s_a):[0,l_a(\gamma)]\rightarrow X$ be an (orientantion preserving) parametrization of $\gamma$ in the anti-norm arc length. First notice that since 
\begin{align*} \gamma'(s) = s_a'(s)\frac{d\gamma}{ds_a}(s_a(s)),
\end{align*}
we have that $s_a'(s) = ||\gamma'(s)||_a$. Now, for each $s_a \in [0,l_a(\gamma)]$, let $n_{\gamma}^a(s_a)$ be the unique vector such that 
\begin{align*}\frac{d\gamma}{ds_a}(s_a) \dashv_B^a n_{\gamma}^a(s_a)\ \  \mathrm{and} \ \ \left[\frac{d\gamma}{ds_a}(s_a),n_{\gamma}^a(s_a)\right] = 1.
\end{align*}
We always have that $n_{\gamma}^a(s_a) \in S$, and to obtain the normal curvature in the anti-norm we let $\varphi(u):[0,2\lambda(S)]\rightarrow X$ be a usual area parametrization of $S$, and let $u(s_a)$ be such that $n_{\gamma}^a(s_a) = \varphi(u(s_a))$. Following from the definition, the normal curvature in the anti-norm is $k_{n}^a(s_a) = \frac{du}{ds_a}(s_a)$. \\

Having described both curvatures that we want to relate, we just need two steps. First, by differentiation we obtain
\begin{align*} \frac{dn_{\gamma}^a}{ds_a}(s_a) = k_n^a(s_a)\frac{d\varphi}{du}(u(s_a)).
\end{align*}
And second, since $u$ is an anti-norm arc-length parameter, we have that
\begin{align*} \left|k_n^a(s_a)\right| = \left|\left|\frac{dn_{\gamma}^a}{ds_a}(s_a)\right|\right|_a.
\end{align*}
Since the anti-norm reverses Birkhoff orthogonality, we have that $n_{\gamma}^a(s_a) \dashv_B \frac{d\gamma}{ds_a}(s_a)$. Therefore, for each $s \in [0,l(\gamma)]$ we have that $n_{\gamma}^a(s_a(s)) = \varphi(t(s))$. Differentiating both sides, we get
\begin{align*} s_a'(s)\frac{dn_{\gamma}^a}{ds_a}(s_a(s)) = k_c(s)\frac{d\varphi}{dt}(t(s)) = k_c(s)\gamma'(s).
\end{align*}
Passing to the anti-norm in both sides, we obtain
\begin{align*} |s_a'(s)|\cdot|k_n^a(s_a)| = |k_c(s)|\cdot||\gamma'(s)||_a.
\end{align*}
Hence, since the signs of $k_c$ and $k_n^a$ are obviously the same, we have the desired equality. Notice that by duality we also have that the normal curvature is the circular curvature in the anti-norm.

\end{proof}

\begin{remark} Guggenheimer \cite{Gug1} worked with normal curvature, but he interpreted it as the circular curvature in the anti-norm. Ait-Haddou et al. \cite{haddou} obtained the Frenet formulas, but gave the same interpretation to the normal curvature. Guggenheimer called this concept simply \emph{curvature}, while Ait-Haddou used the name \emph{$\mathcal{U}$-circular curvature}.
\end{remark}

\begin{coro} The following statements are equivalent:\\

\noindent\textbf{(a)} $X$ is a Radon plane, \\

\noindent\textbf{(b)} the normal and circular curvatures of the unit circle coincide, and \\

\noindent\textbf{(c)} the normal and circular curvatures of any curve coincide.
\end{coro}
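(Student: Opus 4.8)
The plan is to exploit the duality already established: the previous proposition says that circular curvature equals normal curvature in the anti-norm, and (by the remark on duality at the end of that proof) normal curvature equals circular curvature in the anti-norm. So on any curve the two curvatures $k_n$ and $k_c$ are related by passing to the anti-norm, and they coincide for \emph{all} curves precisely when the norm and the anti-norm induce the same geometry, i.e., when the plane is Radon. The implication $\mathbf{(c)}\Rightarrow\mathbf{(b)}$ is trivial. So I would prove $\mathbf{(a)}\Rightarrow\mathbf{(c)}$ and $\mathbf{(b)}\Rightarrow\mathbf{(a)}$, closing the cycle.

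For $\mathbf{(a)}\Rightarrow\mathbf{(c)}$: in a Radon plane we have normalized the determinant form so that $\|\cdot\| = \|\cdot\|_a$, hence $S = S_a$, the area parametrization $\varphi$ of $S$ coincides with that of $S_a$ (up to the agreed orientation), and arc length in the norm equals arc length in the anti-norm. Running the definitions of $k_n$ and $k_c$ side by side, both are computed by identifying a unit tangent/normal vector within the same circle $S=S_a$ and differentiating the same parameter; since Birkhoff orthogonality is symmetric in a Radon plane, the right normal field $n_\gamma$ and the tangent field $\gamma'$ live in the same circle, and the two constructions literally collapse to the same computation. Concretely, the Frenet formula $n_\gamma'(s) = -k_n(s)\gamma'(s)$ together with the anti-norm interpretation of $k_c$ forces $k_n = k_c$ pointwise; I would simply invoke the proposition with $\|\cdot\|=\|\cdot\|_a$.

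For $\mathbf{(b)}\Rightarrow\mathbf{(a)}$: this is the substantive direction and the main obstacle. Suppose the normal and circular curvatures of the unit circle $S$ agree. Parametrize $S$ itself as the curve $\gamma$. The circular curvature of $S$ measured against $S$ is identically $1$ (the unit circle is its own osculating circle everywhere, so $k_c \equiv 1$ along $S$). On the other hand, the normal curvature of $S$ is, by the proposition, the circular curvature of $S$ in the anti-norm, i.e., it measures how the tangent direction of $S$ sweeps through $S_a$; being identically $1$ would force the arc-length parametrization of $S$ to match, direction by direction, the arc-length parametrization of $S_a$ up to an additive constant, which means $S_a$ is a rotate-free reparametrization of $S$ with the same speed, hence $S = S_a$ (using strict convexity and that both pass through corresponding Birkhoff-orthogonal directions with the normalization $[\varphi,\varphi']=1$). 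Equality $S=S_a$ under the fixed determinant form is exactly the statement that Birkhoff orthogonality is symmetric, i.e., $X$ is Radon. The delicate point to get right is the bookkeeping of which parametrization ($\|\cdot\|$-arc length vs.\ area vs.\ $\|\cdot\|_a$-arc length) is being used on each side, and to argue that pointwise equality of these two speeds — not merely of their integrals — yields $S_a = S$ as \emph{sets}; strict convexity and $C^2$-smoothness of $S$ are what make this identification legitimate. Once $S = S_a$ is in hand, one cites the characterization of Radon planes (symmetry of Birkhoff orthogonality) recalled in Section~3 to conclude.
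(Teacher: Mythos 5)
Your decomposition (dispose of \textbf{(c)}$\Rightarrow$\textbf{(b)} trivially, get \textbf{(a)}$\Rightarrow$\textbf{(c)} from the proposition after normalizing $||\cdot||=||\cdot||_a$, and concentrate on \textbf{(b)}$\Rightarrow$\textbf{(a)}) is exactly the paper's, which simply declares that it suffices to prove \textbf{(b)}$\Rightarrow$\textbf{(a)}. For that substantive direction, however, you take a genuinely different, dual route. You use that $k_n$ of $S$ equals the circular curvature of $S$ computed in the anti-norm; its being identically $1$ says that $S$ and $S_a$, both run by anti-norm arc length, have the same anti-norm-unit, equally oriented tangent vector at parameter values differing by a constant. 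That conclusion is correct, but the step you yourself flag as delicate is left as a gesture: ``rotate-free reparametrization with the same speed, hence $S=S_a$'' only becomes an argument once you observe that two anti-norm-unit vectors pointing in the same oriented direction are equal \emph{as vectors}, so that integrating the identity of tangent fields gives $S(s_a)=S_a(s_a+c)+v$ for a constant $v$, and the central symmetry of both closed curves about the origin forces $v=0$; invoking ``strict convexity and $C^2$-smoothness'' does not by itself do this work. The paper instead integrates the other Frenet equation: from $k_n\equiv k_c\equiv 1$ and $n_{\gamma}'(s)=-k_n(s)\gamma'(s)$ it gets $n_{\varphi}'(s)=-\varphi'(s)$, chooses $\varphi(0)$ at an endpoint of a conjugate diameter so that $n_{\varphi}(0)=-\varphi(0)$ and the integration constant vanishes, and concludes $n_{\varphi}(s)=-\varphi(s)$; since $||n_{\varphi}(s)||_a=1$ by construction and now also $||n_{\varphi}(s)||=1$, the norm and the anti-norm agree in every direction. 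The two integrations are dual to each other (yours along the tangent field, the paper's along the right normal field); the paper's is marginally cleaner because the initial-condition bookkeeping is absorbed into one well-chosen base point, whereas yours needs the symmetry argument to kill the residual translation. With that one step made explicit, your proof is complete.
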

\begin{proof} Clearly, it is enough to prove \textbf{(b)}$\Rightarrow$\textbf{(a)}. Of course, the circular curvature of the unit circle is equal to $1$, and hence if \textbf{(b)} holds, we have from the Frenet formula (\ref{frenet2}) that
\begin{align*} n_{\varphi}'(s) = -\varphi'(s),
\end{align*}
where $\varphi(s):[0,l(S)]\rightarrow X$ is a usual arc length parametrization of the unit circle such that $\varphi(0)$ is an endpoint of a conjugate diameter (notice that this gives $n_{\varphi}(0) = -\varphi(0)$). By integration it follows that $||n_{\varphi}(s)||$ equals $1$. Since $||n_{\varphi}(s)||_a$ equals $1$ by definition, it follows that the norm and the anti-norm coincide in every direction. 

\end{proof}

Intuitively, one could measure curvature by considering the length on the unit circle determined by the unit tangent vector field, instead of the area (as in the definition of Minkowski curvature). Let $\varphi(t):[0,l(S)]\rightarrow X$ be an arc-length parametrization in the unit circle, and let $t(s):[0,l(\gamma)]\rightarrow[0,l(S)]$ be the function such that $\gamma'(s) = \varphi(t(s))$. Then we define the \emph{arc-length curvature} of $\gamma$ to be the number 
\begin{align*} k_l(s) := t'(s).
\end{align*}
It turns out that the \emph{arc-length curvature is precisely the Minkowski curvature in the anti-norm}. Notice that, in some sense, \emph{this is a duality relation as in the case of normal and circular curvatures}.\\

To prove our claim, let us calculate the Minkowski curvature in the anti-norm in the usual way: we assume that $\psi(v):[0,2\lambda(S_a)]\rightarrow X$ is a parametrization in the anti-circle by twice the area of sectors, and let $v(s):[0,l(\gamma)]\rightarrow[0,2\lambda(S_a)]$ be the function such that $\gamma'(s) = \psi(v(s))$. Therefore, denoting the Minkowski curvature in the anti-norm of $\gamma$ by $k_{m,a}(s)$ and differentiating $\gamma'(s)$ in two ways, we have
\begin{align*} k_l(s)\frac{d\varphi}{dt}(t(s)) = \gamma''(s) = k_{m,a}(s)\frac{d\psi}{dv}(v(s)).
\end{align*}
Then it follows that the signs of $k_l$ and $k_{m,a}$ are equal. Moreover, since $\frac{d\varphi}{dt}(t(s))$ and $\frac{d\psi}{dv}(v(s))$ are always unit vectors, we have that $|k_l(s)| = |k_{m,a}(s)|$ for any $s \in [0,l(\gamma)]$. 

\begin{remark} Thompson \cite[p. 270]{thompson} briefly defines two curvature types in a Minkowski plane. The first one (which he denoted by $\kappa$) coincides with Minkowski curvature, and the second one (which he denoted by $\kappa'$) is defined by measuring the variation of the area determined by the tangent vector field \emph{in the unit anti-circle}, but with respect to the arc length \emph{in the norm}. By our previous discussion, if we consider the variation of the area determined by the tangent vector field within the unit anti-circle with respect to the arc length \emph{in the anti-norm}, we obtain the arc-length curvature. 
\end{remark}

\section{Using an auxiliary Euclidean structure}

From the ``heuristic" point of view it would be more interesting to develop the differential geometry of curves in normed planes without considering anything else than general concepts of vector spaces. Indeed, the reader may notice that in the previous section we obtained definitions of curvature types without using even a coordinate system, although in almost all papers on Minkowski geometry in some way an auxiliary Euclidean structure is used. (Moreover, for some of the related results it even seems to be hard to get them without such a structure). Some of the formulas that we obtain in this section appeared (with slight variations) in \cite{haddou} and \cite{Pet}. \\

Any two-dimensional real vector space is isomorphic to $\mathbb{R}^2$, and Minkowski geometry is invariant through affine transformations. In other words, if a convex body $K$ is symmetric with respect to the origin and thus yields a norm in a plane $X$ (having $K$ as its unit ball), and $T:X\rightarrow X$ is an affine transformation, then the geometry derived from $T(K)$ is exactly the same (under the identification $x\mapsto Tx$, of course). Therefore, any Minkowski geometry can be studied by identifying its unit ball with a convex body $K$ centered at the origin of the plane $\mathbb{R}^2$, endowed with the usual coordinate basis $\{e_1,e_2\}$, and assuming that the fixed determinant form is the standard determinant. \\

Conversely, a convex body $K$ in $\mathbb{R}^2$ centered at the origin induces a Minkowski norm by dividing the Euclidean length of a vector by half the Euclidean length of the diameter of $K$ in the same direction of the vector. Having said this, we shall consider our unit circle $S$ parametrized in the polar form
\begin{align}\label{polar} \varphi(\theta) = p(\theta)\cdot(\cos\theta,\sin\theta), \ \ \theta \in [0,2\pi],
\end{align}
where $p:[0,2\pi]\rightarrow\mathbb{R}$ is a $\pi$-periodic function of class (at least) $C^2$. Geometrically, the polar form gives the Euclidean length of a diameter of the unit circle in a given direction. It is clear that, to obtain the unit anti-circle, one just has to consider the parametrized curve $\psi:[0,2\pi]\rightarrow\mathbb{R}^2$ given by
\begin{align} \label{anti} \psi(\theta) := -\frac{\varphi'(\theta)}{[\varphi(\theta),\varphi'(\theta)]} = -\frac{p'(\theta)}{p(\theta)^2}(\cos\theta,\sin\theta) - \frac{1}{p(\theta)}(-\sin\theta,\cos\theta), \ \ \theta \in [0,2\pi].
\end{align}
The reader may notice that, obviously, this is not a polar equation of $S_a$. We can easily derive its (Euclidean) \emph{support function} $h_{\psi}$ (which is the function that associates each oriented direction to the distance of the orthogonal tangent line of a given curve to the origin). To verify this, set $h(\theta):=p(\theta)^{-1}$ and notice that $\psi(\theta) = h'(\theta)\cdot(\cos\theta,\sin\theta) - h(\theta)\cdot(-\sin\theta,\cos\theta)$. Therefore,
\begin{align*} \psi'(\theta) = (h''(\theta) + h(\theta))\cdot(\cos\theta,\sin\theta),
\end{align*}
 and then the normal direction to $S_a$ in the point $\psi\left(\theta+\frac{\pi}{2}\right)$ is $(\cos\theta,\sin\theta)$. Since
\begin{align*} \psi\left(\theta + \frac{\pi}{2}\right) = h\left(\theta + \frac{\pi}{2}\right)\cdot(\cos\theta,\sin\theta) + h'\left(\theta+\frac{\pi}{2}\right)\cdot(-\sin\theta,\cos\theta),
\end{align*}
it follows that the support function of $S_a$ is given by $h_{\psi}(\theta) = h\left(\theta+\frac{\pi}{2}\right) = p\left(\theta+\frac{\pi}{2}\right)^{-1}$. Notice carefully that this is evaluated at the point of $S_a$ in which its normal direction is $(\cos\theta,\sin\theta)$. \\

\begin{figure}[h]
\centering
\includegraphics{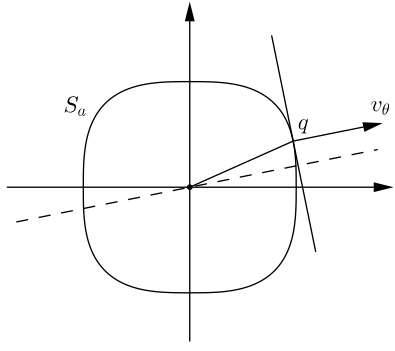}
\caption{$q$ is the point $\psi\left(\theta + \frac{\pi}{2}\right)$, where the normal direction to $S_a$ is $v_{\theta}:=(\cos\theta,\sin\theta)$.}
\label{suppanticircle}
\end{figure}

Our next goal is to obtain an expression for the Minkowski curvature of a smooth curve in terms of its Euclidean curvature (which we will denote by $k_e$), in a similar way as used by Petty in \cite{Pet}. For this sake, let $\gamma(s):[0,l(\gamma)]\rightarrow\mathbb{R}^2$ be a smooth curve parametrized by Minkowski arc length. For each $s \in [0,l(\gamma)]$ let $\theta(s) \in [0,2\pi]$ be such that $ \gamma'(s) = p(\theta(s))\cdot(\cos\theta(s),\sin\theta(s))$. Differentiating this, we get
\begin{align*} \gamma''(s) = \theta'(s)\frac{dp}{d\theta}(\theta(s))\cdot(\cos\theta(s),\sin\theta(s)) + \theta'(s)p(\theta(s))\cdot(-\sin\theta(s),\cos\theta(s)).
\end{align*}
Now, to obtain the Minkowski curvature of $\gamma$ at $\gamma(s)$, we just need to write the right normal field $n_{\gamma}(s)$ of $\gamma(s)$ (defined in Section \ref{secdefmink}) in terms of the auxiliary Euclidean structure. From (\ref{anti}) we have
\begin{align}\label{rightnormal} n_{\gamma}(s) = \frac{1}{p(\theta(s))^2}\frac{dp}{d\theta}(\theta(s))\cdot(\cos\theta(s),\sin\theta(s)) + \frac{1}{p(\theta(s))}\cdot(-\sin\theta(s),\cos\theta(s)).
\end{align}
Therefore, we may write $\gamma''(s) = \theta'(s)p(\theta(s))^2n_{\gamma}(s)$. It follows that the Minkowski curvature of $\gamma$ at $\gamma(s)$ is given by
\begin{align}\label{minkcurv} k_m(s) = \theta'(s)p(\theta(s))^2,
\end{align}
for each $s \in [0,l(\gamma)]$. \\

To determine the Euclidean curvature of $\gamma$, we let $\gamma(s_e):[0,l_e(\gamma)]\rightarrow\mathbb{R}^2$ be a parametrization by Euclidean arc length and, given $s_e \in [0,l_e(\gamma)]$, let $\theta(s_e)\in [0,2\pi]$ be the number for which 
\begin{align*}
\frac{d\gamma}{ds_e}(s_e) = (\cos\theta(s_e),\sin\theta(s_e)).
\end{align*}
Differentiating this, we get
\begin{align*} \frac{d^2\gamma}{ds_e^2}(s_e) = \frac{d\theta}{ds_e}(s_e)\cdot(-\sin\theta(s_e),\cos\theta(s_e)),
\end{align*}
and thus the Euclidean curvature of $\gamma$ at $\gamma(s_e)$ is given by $k_e(s_e) = \frac{d\theta}{ds_e}(s_e)$. To relate both curvatures, first notice that $s\mapsto s_e(s)$ is a re-parametrization of $\gamma$, and we may differentiate to obtain
\begin{align*} \gamma'(s) = s_e'(s)\frac{d\gamma}{ds_e}(s_e(s)) = s_e'(s)\cdot(\cos\theta(s_e(s)),\sin\theta(s_e(s))),
\end{align*}
from where we have $s_e'(s) = p(\theta(s))$. Now,
\begin{align}\label{eqtheta} \theta'(s) = s_e'(s)\frac{d\theta}{ds_e}(s_e(s)) = p(\theta(s))k_e(s_e(s)).
\end{align}
Substituting at (\ref{minkcurv}) it follows that the Minkowski and Euclidean curvatures of $\gamma$ at $\gamma(s)$ are related by the equality
\begin{align}\label{minkeuclidcurv} k_m(s) = k_e(s)p(\theta(s))^3, \ \ s\in [0,l(\gamma)].
\end{align}
Since the arc-length curvature is the Minkowski curvature in the anti-norm, we just have to proceed the same way to obtain an expression for it. Let $\gamma$ be parametrized by the arc length $s_a$, and assume that $q(\theta)$ is the polar equation of the unit anti-circle. Then, if $\theta(s_a)$ is the function such that $\gamma'(s_a) = q(\theta(s_a))\cdot(\cos\theta(s_a),\sin\theta(s_a))$, we have
\begin{align}\label{alceuclid} k_l(s_a) = k_e(s_a)q(\theta(s_a))^3.
\end{align}
 
To get an expression for the normal curvature, we have to derivate the right normal field expressed in (\ref{rightnormal}). For the sake of having an easier notation, we put again $h(\theta):=p(\theta)^{-1}$ and rewrite 
\begin{align*} n_{\gamma}(s) = -\frac{dh}{d\theta}(\theta(s))\cdot(\cos\theta(s),\sin\theta(s)) + h(\theta(s))\cdot(-\sin\theta(s),\cos\theta(s)). 
\end{align*}
Differentiating with respect to $s$, we have

\begin{align*} n'_{\gamma}(s) = \theta'(s)\left(-\frac{d^2h}{d\theta^2}(\theta(s)) - h(\theta(s))\right)\cdot(\cos\theta(s),\sin\theta(s)) =\\= \frac{\theta'(s)}{p(\theta(s))}\left(-\frac{d^2h}{d\theta^2}(\theta(s)) - h(\theta(s))\right)\gamma'(s) = k_e(s)\left(-\frac{d^2h}{d\theta^2}(\theta(s)) - h(\theta(s))\right)\gamma'(s),
\end{align*}
where the last equality comes from (\ref{eqtheta}). It follows from (\ref{frenet2}) that
\begin{align*} k_n(s) = k_e(s)\left(\frac{d^2h}{d\theta^2}(\theta(s)) + h(\theta(s))\right).
\end{align*}

Recall now that the unit anti-circle is parametrized by $\psi(\theta) = h'(\theta)\cdot(\cos\theta,\sin\theta) - h(\theta)\cdot(-\sin\theta,\cos\theta)$. The geometric interpretation of this parametrization is that $\psi'(\theta)$ points in the direction of $(\cos\theta,\sin\theta)$. Hence, the tangent line to the unit anti-circle at $\psi(\theta(s))$ is parallel to $\gamma'(s)$. The Euclidean curvature of the unit anti-circle at $\psi(\theta)$ is easily calculated, namely
\begin{align*} k_{\psi}(\theta) = \frac{[\psi'(\theta),\psi''(\theta)]}{||\psi'(\theta)||_e^3} = \left(\frac{d^2h}{d\theta^2}(\theta)+h(\theta)\right)^{-1},
\end{align*}
where $||\cdot||_e$ denotes the Euclidean norm. Therefore, we have
\begin{align}\label{normaleuclid} k_n(s) = \frac{k_e(s)}{k_{\psi}(\theta(s))}.
\end{align}
In other words, the normal curvature of a curve $\gamma$ at $\gamma(s)$ is the ratio between the Euclidean curvature of $\gamma$ at $\gamma(s)$ and the Euclidean curvature of the unit anti-circle at a point where the direction $\gamma'(s)$ supports it.\\

Finally, since the circular curvature is the normal curvature in the anti-norm, we have that the circular curvature of $\gamma$ at $\gamma(s)$ is given by
\begin{align}\label{circulareuclid} k_c(s) = \frac{k_e(s)}{k_{\varphi}(\theta(s))},
\end{align} 
where $k_{\varphi}(\theta(s))$ is the Euclidean curvature of the unit circle at a point where the direction $\gamma'(s)$ supports it. The reader may notice that this was the definition used by Ghandehari in \cite{Ghan1}.\\

In \cite{Pet}, Petty mentioned that if the Minkowski curvature and the normal curvature coincide for any curve, then the geometry is Euclidean. We now present a formal proof of a similar result. 

\begin{teo}\label{minknormequal} Let $(X,||\cdot||)$ be a normed plane. If there exists a smooth curve $\gamma:[0,c]\rightarrow X$ whose tangent field goes through every direction of the plane, and whose Minkowski curvature and normal curvature coincide in each point, then the norm is Euclidean.
\end{teo}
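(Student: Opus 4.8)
The plan is to translate the hypothesis into an ordinary differential equation for the polar radius of the unit circle and then recognize that the only solutions come from ellipses. Work in the auxiliary Euclidean structure, writing $\varphi(\theta)=p(\theta)\cdot(\cos\theta,\sin\theta)$ and $h:=p^{-1}$. By formula (\ref{minkeuclidcurv}), the Minkowski curvature of $\gamma$ at $\gamma(s)$ is $k_m(s)=k_e(s)\,p(\theta(s))^3$, and by the computation preceding (\ref{normaleuclid}), the normal curvature is $k_n(s)=k_e(s)\bigl(h''(\theta(s))+h(\theta(s))\bigr)$. Hence at every parameter $s$ with $k_e(s)\neq 0$ the assumption $k_m(s)=k_n(s)$ yields $p(\theta(s))^3=h''(\theta(s))+h(\theta(s))$, and since $p=h^{-1}$ this is
\begin{align*} h''(\theta)+h(\theta)=h(\theta)^{-3}. \end{align*}

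The first real step is to promote this identity from the directions realized with $k_e\neq 0$ to \emph{all} directions. By (\ref{eqtheta}) we have $\theta'(s)=p(\theta(s))\,k_e(s_e(s))$, so $k_e(s)=0$ exactly when $\theta'(s)=0$; thus on any open set where $k_e$ vanishes the function $\theta$ is locally constant. If the set $\{\theta(s):k_e(s)\neq 0\}$ were not dense in the circle, its closure would miss an open arc $J$; then $\theta^{-1}(J)$ would be open and contained in $\{\theta'=0\}$, hence a union of intervals on each of which $\theta$ is constant with value in $J$ — but a component not equal to all of $[0,c]$ has an endpoint mapped into $\partial J$, contradicting constancy and continuity, while a component equal to $[0,c]$ would force $\theta([0,c])\subseteq J$, contradicting that the tangent field meets every direction. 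So the set is dense, and since $p$, $h$, $h''$ are continuous, the displayed equation holds for every $\theta\in[0,2\pi]$ (its left-hand side is automatically $\pi$-periodic, consistently with $p$).

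Now solve $h''+h=h^{-3}$. Multiplying by $h'$ and integrating gives the first integral
\begin{align*} (h')^2+h^2+h^{-2}=2C \end{align*}
for a constant $C$. Put $u:=h^2$, so $u'=2hh'$ and $u''=2(h')^2+2hh''$; using $hh''=h(h^{-3}-h)=u^{-1}-u$ and $(h')^2=2C-u-u^{-1}$, one gets $u''=4C-4u$, i.e. $u''+4u=4C$. Therefore $h(\theta)^{-2}=u(\theta)=C+a\cos 2\theta+b\sin 2\theta$ for constants $a,b$, and hence $p(\theta)^2\bigl(C+a\cos 2\theta+b\sin 2\theta\bigr)=1$. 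Writing $(x,y)=\varphi(\theta)$ and using $p^2=x^2+y^2$, $p^2\cos 2\theta=x^2-y^2$, $p^2\sin 2\theta=2xy$, this becomes
\begin{align*} (C+a)x^2+2bxy+(C-a)y^2=1, \end{align*}
a centered conic; since the unit circle $S$ is bounded, the quadratic form is positive definite and $S$ is an ellipse. Thus $B$ is an elliptical disk, so there is a linear isomorphism carrying $B$ onto the Euclidean unit disk, i.e. the norm is Euclidean.

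I expect the main obstacle to be the second paragraph: extending the pointwise identity off the set $\{k_e\neq 0\}$ is the only place the hypothesis on $\gamma$ is used, and it requires the slightly delicate argument above about components of $\theta^{-1}(J)$. The remaining part is the routine energy-integral reduction, and the substitution $u=h^2$ is the step that makes the elliptic structure of the unit circle manifest; identifying the resulting conic as an ellipse (hence the norm as Euclidean) is then immediate.
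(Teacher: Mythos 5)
Your proposal is correct, and it shares its first step with the paper -- both reduce the hypothesis, via (\ref{minkeuclidcurv}) and (\ref{normaleuclid}), to the Ermakov--Pinney equation $h''+h=h^{-3}$ for $h=p^{-1}$ -- but the two arguments then genuinely diverge. The paper imposes the periodic boundary conditions $h(0)=h(\pi)$, $h'(0)=h'(\pi)$ and asserts that $h\equiv 1$ is the \emph{unique} solution of that boundary value problem, concluding that the unit circle is the round Euclidean circle; you instead integrate the equation completely (energy integral, then the substitution $u=h^2$ giving $u''+4u=4C$) and find the full solution family $p(\theta)^{-2}=C+a\cos 2\theta+b\sin 2\theta$, i.e.\ all centered ellipses of the appropriate area. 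Your route is the safer one: the paper's uniqueness claim is in fact not right as stated, since every member of your family (equivalently $h=\sqrt{A\cos^2\theta+2B\cos\theta\sin\theta+C\sin^2\theta}$ with $AC-B^2=1$) satisfies the same periodic boundary conditions, so the BVP does not pin down $h\equiv 1$; what survives is exactly what you prove, namely that $S$ is an ellipse and hence the norm is Euclidean (which is all the theorem asserts, as ``Euclidean'' here means induced by an inner product, not that $S$ is the round circle in the chosen coordinates). You also supply a density argument to propagate the pointwise identity across the zero set of $k_e$, using the hypothesis that the tangent field meets every direction; the paper passes over this point silently, so this is added care rather than a deviation. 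Two small blemishes, neither affecting correctness: the line ``$h(\theta)^{-2}=u(\theta)$'' should read $h(\theta)^{2}=u(\theta)$ (your subsequent display $p(\theta)^2\bigl(C+a\cos 2\theta+b\sin 2\theta\bigr)=1$ is the correct consequence), and you could say one word about why $C$ is a single constant on all of $[0,2\pi]$ (the identity holds on a connected set, so the first integral is globally constant).
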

\begin{proof} If such a curve $\gamma$ exists, then the equalities (\ref{minkeuclidcurv}) and (\ref{normaleuclid}) yield that the function $h(\theta) = p(\theta)^{-1}$ restricted to $[0,\pi]$ is a solution to the problem
\begin{align*} \left\{\begin{array}{ll}h''(\theta) + h(\theta) = h(\theta)^{-3}, \ \ \theta \in [0,\pi] \\ h(0) = h(\pi), \ \ h'(0) = h'(\pi) \end{array}\right.,
\end{align*}
which is an Ermakov-Pinney equation with boundary conditions (see \cite{pinney}). The function identical to $1$ is a solution, and it must be unique since another solution would give a non-zero solution to the problem
\begin{align*} \left\{\begin{array}{ll}y''(\theta) + y(\theta) = 0, \ \ \theta \in [0,\pi] \\ y(0) = y(\pi), \ \ y'(0) = y'(\pi) \end{array}\right..
\end{align*}
It follows that $p$ equals $1$, and hence the unit circle is the Euclidean circle. The reader can find more on the relations between these equations in \cite{pinney}.

\end{proof}

\begin{coro} The same conclusion holds under the same hypothesis if we replace Minkowski curvature and normal curvature by, respectively, arc-length curvature and circular curvature. 
\end{coro}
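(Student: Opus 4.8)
The plan is to deduce the corollary from Theorem \ref{minknormequal} by passing to the \emph{anti-plane} $(X,\|\cdot\|_a)$. The two duality facts established in Section \ref{secdefmink} are exactly what is needed: the arc-length curvature of a curve equals its Minkowski curvature measured in the anti-norm, and --- by the Proposition stating that the circular curvature is the normal curvature in the anti-norm --- its circular curvature equals its normal curvature measured in the anti-norm.

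The first step is to observe that the hypothesis of the corollary, stated for $(X,\|\cdot\|)$, is literally the hypothesis of Theorem \ref{minknormequal} stated for $(X,\|\cdot\|_a)$: the curve $\gamma$ is unchanged; the requirement that its tangent field run through every direction of the plane is affine, hence norm-independent; and, by the two identifications above, the pointwise coincidence of the arc-length and circular curvatures of $\gamma$ is precisely the pointwise coincidence of its Minkowski and normal curvatures \emph{computed with respect to $\|\cdot\|_a$}. One should also record that $S_a$ inherits from $S$ the $C^2$-regularity and strict convexity that are assumed throughout, so that the constructions of Section \ref{secdefmink} and the auxiliary-Euclidean formulas above apply equally to the anti-norm; this is the standard behaviour of the polar-type duality defining $\|\cdot\|_a$.

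The second step is immediate: Theorem \ref{minknormequal}, applied to the normed plane $(X,\|\cdot\|_a)$, gives that $\|\cdot\|_a$ is Euclidean. To conclude, recall that the anti-norm of the anti-norm (with respect to the same determinant form) is the original norm, so $\|\cdot\| = (\|\cdot\|_a)_a$; since the anti-norm of a Euclidean norm is again Euclidean --- the anti-operation maps ellipses to ellipses, which is a one-line verification --- it follows that $\|\cdot\|$ is Euclidean, which is the assertion.

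Alternatively, one may run the computational argument of Theorem \ref{minknormequal} directly in the anti-norm: with $g := q^{-1}$, where $q$ is the polar function of $S_a$, formula (\ref{alceuclid}) gives $k_l = k_e\,g(\theta)^{-3}$, while the anti-norm analogue of (\ref{normaleuclid}) gives $k_c = k_e\bigl(g''(\theta) + g(\theta)\bigr)$, both evaluated at the point of $\gamma$ whose tangent direction has polar angle $\theta$; equating these along $\gamma$ and using that $\theta$ sweeps all of $[0,\pi]$ forces $g$ to solve the same Ermakov-Pinney boundary value problem that appears in the proof of Theorem \ref{minknormequal}, whence $S_a$, and therefore $S$, is Euclidean. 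In either route the main obstacle is the same: one must be sure that the regularity and strict convexity hypotheses carry over to the anti-norm, so that the apparatus of this section is available there. Granting that, the corollary is a straightforward translation of the theorem already proved.
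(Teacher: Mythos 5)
Your proposal is correct and follows essentially the same route as the paper, which disposes of the corollary in one line: the hypotheses transfer to the anti-norm via the two duality identifications, Theorem \ref{minknormequal} then shows the anti-norm is Euclidean, and hence so is the norm. Your additional remarks (regularity of $S_a$, the direct Ermakov--Pinney computation in the anti-norm) merely flesh out details the paper leaves implicit.
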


NameIy, in this case the anti-norm is Euclidean, and so is the norm.

\section{Curves of constant curvature}

In the Euclidean plane, a curve with constant curvature must be an arc of a circle. We will study \emph{curves of constant curvature in Minkowski planes}, considering the three curvature types that we defined. We begin with identifying the curves which have constant circular and normal curvatures, since we will not use an auxiliary Euclidean structure to do so. This is not the case for the Minkowski curvature, and for this reason we shall deal with it later.

\begin{teo} A curve $\gamma:[0,c]\rightarrow X$ has constant circular curvature if and only if it is an arc of a Minkowski circle.
\end{teo}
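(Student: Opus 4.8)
The plan is to reduce everything to the elementary description of Minkowski circles, using the re-parametrization of the unit circle introduced just before equality (\ref{eqcirc}). For the ``if'' part, suppose $\gamma$ is an arc of the Minkowski circle of radius $r>0$ centered at $x_0$, say the image of $t\mapsto x_0+r\varphi_S(t)$ for $t$ in some interval, where $\varphi_S$ denotes an arc-length parametrization of $S$. Since $\|\varphi_S'\|=1$, this curve has speed $r$, so its arc-length parametrization is $s\mapsto x_0+r\varphi_S(t_1+s/r)$ for a suitable $t_1$; differentiating gives $\gamma'(s)=\varphi_S'(t_1+s/r)$, so in the notation defining circular curvature (with the unit circle parametrized by arc length) we have $t(s)=t_1+s/r$ and hence $k_c(s)=t'(s)=1/r$ for all $s$. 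Thus the circular curvature is constant (its sign depending only on the chosen orientation).

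Conversely, assume $k_c\equiv c_0$. If $c_0=0$, then $t(s)$ is constant, $\gamma'$ is a fixed unit vector, and $\gamma$ is a line segment; this degenerate case is to be excluded, since (as noted before the proposition on circular and normal curvatures) the circular-curvature/osculating-circle construction does not make sense at inflection points. So assume $c_0\neq 0$ and write $\varphi(s):=\varphi(t(s))$ for the re-parametrized unit circle whose tangent at $\varphi(s)$ is parallel to $\gamma'(s)$. By (\ref{eqcirc}) we have $\varphi'(s)=k_c(s)\gamma'(s)=c_0\,\gamma'(s)$ for every $s$, and integrating from $0$ to $s$ yields $\varphi(t(s))-\varphi(t(0))=c_0\bigl(\gamma(s)-\gamma(0)\bigr)$, whence
\begin{align*}
\gamma(s)=\Bigl(\gamma(0)-c_0^{-1}\varphi(t(0))\Bigr)+c_0^{-1}\varphi(t(s)).
\end{align*}
Since $S$ is centrally symmetric, $c_0^{-1}S=|c_0|^{-1}S$ is a Minkowski circle of radius $|c_0|^{-1}$, so $\gamma$ is an arc of the Minkowski circle of radius $|c_0|^{-1}$ centered at $\gamma(0)-c_0^{-1}\varphi(t(0))$; this also identifies the osculating circle of a circular arc with the arc itself. (Equivalently, one may argue directly: $k_c\equiv c_0$ gives $t(s)=c_0 s+t_0$, and integrating $\gamma'(s)=\frac{d\varphi}{dt}(c_0 s+t_0)$ produces the same formula.)

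The argument is essentially bookkeeping, so I do not expect a genuine obstacle. The only points that require care are the consistent use of the \emph{arc-length} parametrization of $S$ that enters the definition of $k_c$ and equality (\ref{eqcirc}), the handling of orientation and of the sign of $c_0$ (absorbed via central symmetry of $S$), and the explicit exclusion of the flat case $c_0=0$, for which no osculating Minkowski circle exists.
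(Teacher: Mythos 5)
Your proposal is correct and follows essentially the same route as the paper: both rest on integrating the identity $\frac{d}{ds}\varphi(t(s))=k_c(s)\gamma'(s)$ from (\ref{eqcirc}) when $k_c$ is constant, the only cosmetic difference being that the paper first normalizes $k_c\equiv 1$ by homothety while you carry a general $c_0$ through the integration. Your explicit treatment of the degenerate case $c_0=0$ and of the ``if'' direction (which the paper dismisses as obvious) is a welcome bit of extra care but does not change the argument.
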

\begin{proof} It is clearly enough to consider that the circular curvature $k_c$ equals $1$. In this case, we proceed as usual and consider that $\varphi(t)$ is an arc length parametrization of the unit circle, and that $t(s)$ is the function such that $t(0) = 0$ and $\gamma'(s) = \frac{d\varphi}{dt}(t(s))$ for each $s \in [0,c]$. Since $k_c(s) = t'(s) = 1$ for each $s \in [0,c]$, it follows that
\begin{align*} t(s) = t(0) + \int_0^st(u) \ du = s,
\end{align*}
and hence $\varphi'(s) = \gamma'(s)$ for every $s \in [0,c]$. Therefore, $\gamma$ is an arc of a unit circle. The converse is obvious.

\end{proof}

\begin{coro} A curve in a normed plane has constant normal curvature if and only if it is an arc of an anti-circle.
\end{coro}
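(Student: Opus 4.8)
The plan is to obtain this as an immediate consequence of the preceding theorem together with duality. Recall from Section~\ref{secdefmink} (and the Proposition proved there) that the normal curvature of a curve is precisely its circular curvature computed with respect to the anti-norm, and that the anti-norm of the anti-norm, taken in the same determinant form, is the original norm, so that the unit anti-circle of $(X,||\cdot||_a)$ is $S$ itself. Hence I would simply apply the theorem to the normed plane $(X,||\cdot||_a)$: a curve $\gamma$ has constant circular curvature in the anti-norm --- equivalently, constant normal curvature in the norm --- if and only if it is an arc of a Minkowski circle of $(X,||\cdot||_a)$, which is by definition an arc of an anti-circle. Both implications are then settled at once.

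Alternatively, one can argue directly from the Frenet formula (\ref{frenet2}), mimicking the proof of the theorem. After rescaling it suffices to treat $k_n\equiv 1$. Then (\ref{frenet2}) reads $n_{\gamma}'(s) = -\gamma'(s)$, so $\gamma(s)+n_{\gamma}(s)$ is a constant vector $c_0$, whence $\gamma(s) = c_0 - n_{\gamma}(s)$. Since $n_{\gamma}(s)$ runs over an arc of the unit anti-circle $S_a$ and $S_a$ is centrally symmetric, $\gamma$ is an arc of the translated anti-circle $c_0 + S_a$. For the converse one checks, exactly as for Minkowski circles, that translation leaves the normal curvature unchanged while scaling the anti-circle by a factor $r$ scales its normal curvature by $r^{-1}$ (this follows by differentiating the Frenet formula under the reparametrization $s\mapsto rs$, using that $S_a$ has normal curvature $1$); hence every arc of an anti-circle has constant normal curvature.

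The only point deserving a word of care is that $(X,||\cdot||_a)$ must be an admissible normed plane for the theorem, i.e. that $S_a$ meets the standing hypotheses of being at least $C^2$ and strictly convex. This is guaranteed by the assumptions on $S$, the explicit formula (\ref{anti}) for the parametrization $\psi$ of $S_a$, and the duality $(S_a)_a = S$. I expect this regularity bookkeeping to be the only --- and a rather mild --- obstacle; once it is in place, the duality argument is essentially a one-liner.
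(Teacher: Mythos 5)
Your first paragraph is exactly the paper's proof: the paper likewise invokes the duality "normal curvature equals circular curvature in the anti-norm" and applies the preceding theorem in the plane $(X,\|\cdot\|_a)$, concluding that the curve is an arc of an anti-circle. The alternative Frenet-formula argument and the regularity remark are correct but not needed; the proposal is right and matches the paper's approach.
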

\begin{proof} Since the circular curvature is the normal curvature in the anti-norm, any curve of constant normal curvature must have constant circular curvature in the anti-norm. Therefore, from the previous theorem it follows that such a curve must be an arc of an anti-circle. 

\end{proof}

It is known that if all the normal lines of a curve in the Euclidean plane intersect at one point, then this curve must be contained in a circle. In a more general setting, hypersurfaces whose \emph{affine normal lines} all meet at one point are well-studied in the field of affine differential geometry (see \cite{nomizu}). We now classify the curves of a normed plane whose right and left-normal lines respectively meet all at one point. 

\begin{prop} Let $\gamma:[0,c]\rightarrow X$ be a curve of class $C^2$. If all the left-normal lines of $\gamma$ meet at a point, then $\gamma$ is contained in a circle. Analogously, if all the right normal lines of $\gamma$ meet at a point, then the $\gamma$ is contained in an anti-circle. 
\end{prop}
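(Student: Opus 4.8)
The plan is to express the hypothesis through the circular-curvature apparatus and then read off two scalar conditions that determine $\gamma$ up to reparametrization. First I would reparametrize $\gamma$ by Minkowski arc length, take the arc-length parametrization $\varphi:[0,l(S)]\to X$ of the unit circle, and let $t(s)$ be the function with $\gamma'(s)=\frac{d\varphi}{dt}(t(s))$ used to define the circular curvature. The key point is that the \emph{left-normal line} of $\gamma$ at $\gamma(s)$ is precisely the line through $\gamma(s)$ spanned by $\varphi(t(s))$: since $\varphi(t(s))\dashv_B\varphi'(t(s))=\gamma'(s)$, the vector $\varphi(t(s))$ is the norm-unit vector whose supporting line of $B$ is parallel to $\gamma'(s)$, which is exactly what ``left normal'' means.

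Now let $P$ be a common point of all the left-normal lines; after a translation assume $P=0$. Then $\gamma(s)$ is a scalar multiple of $\varphi(t(s))$ for every $s$, so we may write $\gamma(s)=r(s)\varphi(t(s))$. The vectors $\varphi(t(s))$ and $\gamma'(s)=\varphi'(t(s))$ are linearly independent, because $[\varphi,\varphi']>0$ along a positively oriented convex curve enclosing the origin; pairing $\gamma(s)=r(s)\varphi(t(s))$ with $[\,\cdot\,,\gamma'(s)]$ gives $r(s)=[\gamma(s),\gamma'(s)]/[\varphi(t(s)),\varphi'(t(s))]$, so $r$ is $C^1$ (and $t$ is $C^1$ since $\gamma$ is $C^2$). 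Differentiating $\gamma(s)=r(s)\varphi(t(s))$ yields $r'(s)\varphi(t(s))=(1-r(s)t'(s))\gamma'(s)$, and linear independence forces \emph{both} $r'(s)=0$ and $r(s)t'(s)=1$ for all $s$. Hence $r\equiv r_0$ is a nonzero constant, $t'(s)=k_c(s)=1/r_0$ is constant, and $\gamma(s)=r_0\,\varphi(t_0+s/r_0)$ is an arc of the Minkowski circle of radius $|r_0|$ centered at $P$; equivalently, one may finish by invoking the theorem above stating that curves of constant circular curvature are arcs of Minkowski circles.

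For the right-normal statement I would avoid repeating the computation and use duality instead. The right normal $n_\gamma(s)$ satisfies $\gamma'(s)\dashv_B n_\gamma(s)$, hence $n_\gamma(s)\dashv_B^a\gamma'(s)$ because the anti-norm reverses Birkhoff orthogonality; therefore the right-normal line of $\gamma$ with respect to the norm coincides with the left-normal line of $\gamma$ with respect to the anti-norm. Since the anti-norm of the anti-norm is the original norm, applying the first part inside the anti-normed plane shows that if all these lines concur, then $\gamma$ is contained in a Minkowski circle of the anti-norm, i.e., in an anti-circle.

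The genuinely necessary step is extracting the two conditions $r'=0$ and $rt'=1$ from a single vector identity, which rests only on the linear independence of a point of $S$ and the tangent there; I do not expect a serious obstacle. The points needing a word of care are the $C^1$ regularity of $r$ and $t$, the automatic fact that a regular $\gamma$ cannot pass through $P$ (otherwise the constant $r_0$ would vanish), and the correct use of the two structural facts about the anti-norm (reversal of Birkhoff orthogonality, and the anti-norm of the anti-norm being the original norm) in the second half.
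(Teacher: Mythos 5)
Your proposal is correct and follows essentially the same route as the paper: write $\gamma(s)=p+f(s)\varphi(t(s))$ (your $r$ is the paper's $f$), differentiate, use linear independence of $\varphi(t(s))$ and $\varphi'(t(s))$ to force $f'\equiv 0$ and $f\,t'\equiv 1$, conclude constant circular curvature, and handle the right-normal case by passing to the anti-norm. Your added remarks on the $C^1$ regularity of $r$ and on $\gamma$ not passing through $P$ are sensible refinements of details the paper leaves implicit, not a different argument.
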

\begin{proof} Assume that $\gamma$ is parametrized by arc length. If $\varphi(t)$ is, as usual, an arc length parametrization of the unit circle, and $t(s)$ is, also as usual, the function such that $\gamma'(s) = \frac{d\varphi}{dt}(t(s))$, then the hypothesis on the left-normal lines yields that we may write
\begin{align*} \gamma(s) = p + f(s)\varphi(t(s)),
\end{align*}
for some function $f:[0,c]\rightarrow\mathbb{R}$. Therefore,
\begin{align*} \frac{d\varphi}{dt}(t(s)) = \gamma'(s) = f'(s)\varphi(t(s)) + f(s)t'(s)\frac{d\varphi}{dt}(t(s)).
\end{align*}
Hence, we must have $f'(s) = 0$ and $f(s) = k_c(s)^{-1}$. It follows that the circular curvature is constant, and then $\gamma$ is contained in a circle. For the right normal lines we just have to work with the anti-circle instead of the circle. 

\end{proof}

To obtain the curves of constant Minkowski curvature we follow Petty \cite{Pet}. Let $\gamma:[0,c]\rightarrow X$ be a curve whose Minkowski curvature equals $1$ (the other cases are clearly homothets). From (\ref{minkeuclidcurv}) it follows that the Euclidean curvature of $\gamma$ is given by $k_e(s) = p(\theta(s))^3$.  Recall that $\theta(s)$ is the direction of the tangent vector to $\gamma$ at $\gamma(s)$. Therefore, at the point where the normal to $\gamma$ points in the direction of the vector $(\cos\theta,\sin\theta)$, the Euclidean curvature is given by $p(\theta+\pi/2)^3$.  Since this curvature function is positively defined over the interval $[0,2\pi]$ and is $\pi$-periodic, we may search a curve which is closed, convex and (up to a translation) symmetric with respect to the origin. The Euclidean curvature of such a curve at the point where the normal points in the direction $(\cos\theta,\sin\theta)$ is known to be given in terms of its support function $h_{\gamma}:[0,2\pi]\rightarrow X$ by $[h_{\gamma}''(\theta) + h_{\gamma}(\theta)]^{-1}$. Hence this support function must be a solution of the boundary value Sturm-Liouville problem (see \cite{coddington})
\begin{align*} \left\{\begin{array}{ll} h_{\gamma}''(\theta) + h_{\gamma}(\theta) = p(\theta + \pi/2)^3, \ \ \theta \in [0,\pi] \\ h_{\gamma}(0) = h_{\gamma}(\pi), \ \   h'_{\gamma}(0) = h'_{\gamma}(\pi) \end{array}\right.,
\end{align*}
for which existence and uniqueness are guaranteed. A calculation shows that the solution is given by
\begin{align*} h_{\gamma}(\theta) = \frac{1}{2}\int_{\theta}^{\theta + \pi}p(u+\pi/2)^3\sin(u-\theta) \ du.
\end{align*}

Petty \cite{Pet} observed that the curve with support function as above is a homothet of the so-called \emph{centroid curve}, which consists of all loci of the centroids of the regions obtained by cutting the unit circle with lines through the origin. It is clear that the curves of constant arc-length curvature will be the homothetic to the centroid curve of the unit anti-circle. \\

In \cite{Pet} there is a proof of the fact that if an anti-circle has constant Minkowski curvature, then the plane is Euclidean. There it is also mentioned that if a Minkowski circle has constant Minkowski curvature, then the plane must be Euclidean as well. The first result can be regarded as a consequence of Theorem \ref{minknormequal}. Indeed, if the unit anti-circle has constant Minkowski curvature, then, up to rescaling, we may assume that the determinant form is unit. But the unit anti-circle has also constant unit normal curvature, and then we have the desired. Petty did not present a proof for the second result, and so we do it now.

\begin{teo} If the unit circle of a normed plane has constant Minkowski curvature, then the plane is Euclidean. The same holds if the unit anti-circle has constant arc-length curvature.
\end{teo}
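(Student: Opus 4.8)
The plan is to reduce the statement to one claim, translate it into the auxiliary Euclidean language, and then prove a rigidity assertion. First I would observe that the two claims are equivalent: since the arc-length curvature of a curve is its Minkowski curvature in the anti-norm, since the anti-norm of the anti-norm is again the norm, and since a normed plane is Euclidean exactly when its anti-norm is, the assertion about the unit anti-circle is the assertion about the unit circle applied to the anti-norm. So it suffices to show that if the unit circle $S$ has constant Minkowski curvature, then the norm is Euclidean. Rescaling the fixed determinant form multiplies $\lambda(B)$, hence the Minkowski curvature of $S$, by a positive constant, so we may assume this constant equals $1$.

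Next I would move to the auxiliary Euclidean structure. Taking $\gamma$ to be the unit circle itself in formula (\ref{minkeuclidcurv}) and parametrizing $S$ by the direction $\theta$ of its tangent line, the hypothesis $k_m\equiv 1$ on $S$ becomes
\begin{align*} h_S(\theta) + h_S''(\theta) = p(\theta)^3, \end{align*}
where $h_S$ is the Euclidean support function of $S$ written in the tangent-angle variable (so that $h_S+h_S''$ is the Euclidean radius of curvature at the point whose tangent direction is $\theta$) and $p$ is the radial function of $S$, also evaluated at $\theta$. Equivalently, by the formula for the support function of a curve of constant Minkowski curvature obtained above, $S$ must agree, up to a homothety centered at the origin, with its own centroid curve.

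The heart of the proof is then the rigidity statement that only ellipses satisfy this. The difficulty is that $p$ and $h_S$ describe the same body but are linked by the \emph{nonlinear} polar--support (Legendre-type) correspondence $p=\sqrt{h_S^2+h_S'^2}$, taken after the appropriate change of angular variable; substituting this into the displayed identity produces a single nonlinear functional-differential equation for $h_S$. Support functions of ellipses solve it --- this is exactly the computation behind the centroid curve of an ellipse being a homothet of the ellipse --- and the task is to exclude all others. I would attack this by linearizing around the circular solution: writing $h_S=1+\varepsilon\eta$ with $\eta$ smooth and $\pi$-periodic, the first-order equation reads $\eta(\theta)+\eta''(\theta)=3\,\eta(\theta+\tfrac{\pi}{2})$, and a Fourier-mode computation shows that its only $\pi$-periodic solutions are the second harmonics $a\cos 2\theta+b\sin 2\theta$, i.e.\ precisely the ellipse perturbations. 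Upgrading this to the full nonlinear statement is the delicate part; I expect it to go either through a global uniqueness argument of the Ermakov--Pinney / Sturm--Liouville type used in the proof of Theorem \ref{minknormequal} (after a substitution casting the equation in that form), or by invoking the known characterization of ellipses as the only centrally symmetric convex bodies homothetic to the body produced by this centroid construction.

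Once $S$ is known to be an ellipse, an affine change of coordinates makes it a Euclidean circle, so the norm is Euclidean, and the unit anti-circle statement then follows by the duality noted at the outset. The step I expect to be the main obstacle is precisely this nonlinear rigidity: the equation is self-referential and nonlinearly coupled, so uniqueness does not fall out of a routine ODE argument, and identifying the structure (or the external result) that forces the solution to be an ellipse is the crux of the matter.
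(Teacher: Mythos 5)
Your reduction of the second claim to the first via the anti-norm duality is correct and matches the paper, and your translation of the hypothesis into the equation $h_S''+h_S = p^3$ (with the quarter-turn shift between tangent and normal angles) is the right Euclidean reformulation. But the proof has a genuine gap exactly where you say it does: the nonlinear rigidity step is never established. What you actually prove is only the \emph{linearized} statement at the round circle (that the second harmonics are the only $\pi$-periodic solutions of $\eta+\eta''=3\,\eta(\theta+\tfrac{\pi}{2})$), which at best shows there are no nearby non-ellipse solutions; it says nothing about bodies far from the disk. Neither of your two proposed completions closes this. The Ermakov--Pinney argument from Theorem \ref{minknormequal} worked because the equation $h''+h=h^{-3}$ is \emph{local} in $h$ (the right-hand side at $\theta$ is a function of $h(\theta)$), so a second solution forces a nonzero solution of the linear boundary-value problem; here the right-hand side involves the radial function at a shifted angle, related to $h_S$ by the nonlocal, nonlinear polar--support correspondence, and that reduction does not go through. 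The centroid-body route would require (i) identifying the paper's ``centroid curve'' with the boundary of the centroid body $\Gamma B$ and (ii) invoking Petty's theorem that $\Gamma K$ homothetic to a centrally symmetric $K$ forces $K$ to be an ellipse --- a correct but substantial external result that you would need to cite and verify applies, not merely gesture at. As written, the crux of the theorem is asserted, not proved.

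For contrast, the paper avoids this analytic difficulty entirely. It observes that constant Minkowski curvature of $S$ means the sector-area parameter and the arc-length parameter of $S$ are affinely related, so $S$ is an \emph{equiframed} curve, hence (being smooth) a Radon curve; moreover $\varphi(s)\dashv_B\varphi(s+c)$ with $c=l(S)/4$, so two directions are Birkhoff orthogonal exactly when their diameters quarter the area of $B$, and by a known characterization (Proposition 3 of \cite{alonso1997area}) this forces the plane to be Euclidean. If you want to salvage your analytic approach, you must either carry out the centroid-body identification and cite the ellipse characterization properly, or find a genuinely global uniqueness argument for the nonlocal equation; the linearization alone does not suffice.
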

\begin{proof} Notice first that, up to rescaling the determinant form $[\cdot,\cdot]$, we may assume that the Minkowski curvature is unit. Consider that $\varphi(s):[0,l(S)]\rightarrow X$ is an arc-length parametrization of the unit circle, and let $u(s):[0,l(S)]\rightarrow[0,2\lambda(S)]$ be the function such that $\varphi'(s) = \varphi(u(s))$, where $u$ is the usual area parameter in $S$. Since $k_m(s)$ equals $1$, it follows that $u'(s)$ is equal to $1$, and hence $u(s) = s + c$ for some constant $c \in \mathbb{R}$. The fact that the area and arc-length parameters are proportional gives that the unit circle is an \emph{equiframed curve}, and since $S$ is smooth it follows that it must be a \emph{Radon curve} (see \cite{martiniantinorms} and \cite{martiniequiframed}). Even more, if we write
\begin{align*} \varphi'(s) = \varphi(u(s)) = \varphi(s+c), \ \ s \in [0,l(S)],
\end{align*}
then $\varphi(s) \dashv_B \varphi(s+c)$ for any $s \in [0,l(S)]$. Geometrically this means that to find the right Birkhoff orthogonal direction to a given vector we always travel the same arc length over the unit circle. Since the plane is Radon we have that $-\varphi(s) = \varphi''(s) = \varphi(s+2c)$, and hence we may consider $c = l(S)/4$. Using again the proportionality between area and arc length in the unit circle, and remembering the uniqueness of Birkhoff orthogonality in both sides (the considered normed plane is smooth and strictly convex) we have that two non-zero vectors are Birkhoff orthogonal if and only if the diameters in their directions divide the domain bounded by the unit circle into four pieces of equal areas. According to \cite[Proposition 3]{alonso1997area} this is a characterization of the Euclidean plane (see also \cite{Ma-Wu2010}). The other claim comes straightforwardly replacing the norm by the anti-norm. 

\end{proof}

We mention a further result in this direction: \emph{if an anti-circle has constant circular curvature, then the plane is Radon}. Indeed, if an anti-circle has constant circular curvature, then it has constant normal curvature in the anti-norm. Hence it must be a circle. Since a circle and an anti-circle are homothets, it follows that the plane is Radon. By duality, the same holds if a circle has constant normal curvature. We finish this section with the simple observation that a \emph{curve for which a curvature type is equal to $0$ must be a straight line segment}. This is immediately noticed when using an auxiliary Euclidean structure, since a curvature type vanishes if and only if the Euclidean curvature vanishes.

\section{The fundamental theorem and invariance under isometries}

Using the results of the previous section we can easily derive fundamental theorems (in the sense of Theorem \ref{fundamental}) for the curvature concepts in a normed plane. The first part of this section refers to this topic.

\begin{teo} Let $k:[0,c]\rightarrow\mathbb{R}$ be a $C^1$ function, and let $(X,||\cdot||)$ be a normed plane. Then there exists a curve $\gamma:[0,c]\rightarrow X$ whose Minkowski curvature function in the arc length parameter is $k$. Such a curve is unique up to the choice of the initial point and the initial tangent vector. The same holds for the normal curvature, for the circular curvature, and for the arc-length curvature. 
\end{teo}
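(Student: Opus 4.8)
The plan is to reduce each of the four statements to a single quadrature, exploiting the fact that every one of these curvatures was \emph{defined} as the derivative of a scalar ``parameter function'' recording how the tangent field (or the right normal field) of $\gamma$ runs through a copy of the unit circle. Concretely, in each case there is a fixed $C^1$ map $\tau$ from an interval onto the Minkowski unit circle $S$ (extended periodically), injective modulo its period, such that a curve $\gamma:[0,c]\to X$ parametrized by Minkowski arc length has the prescribed curvature exactly when its velocity satisfies $\gamma'(s)=\tau(w(s))$, where $w$ is the corresponding parameter function and $w'=k$. For the Minkowski curvature, $w=u$ and $\tau=\varphi$ is the area-sector parametrization of $S$; for the arc-length curvature, $w=t$ and $\tau=\varphi$ is the (norm) arc-length parametrization of $S$; for the circular curvature, $w=t$ and $\tau=\dfrac{d\varphi}{dt}$ is the unit tangent field of $S$; and for the normal curvature, $w=v$ and $\tau(v)$ is the unique vector $x\in S$ with $x\dashv_B\psi(v)$ and $[x,\psi(v)]=1$, where $\psi$ is the area-sector parametrization of the anti-circle $S_a$ (so that $\gamma'=\tau(v)$ if and only if $n_\gamma=\psi(v)$). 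In all four cases $\tau$ takes values in $S$, because a Minkowski-arc-length velocity is automatically a norm-unit vector, which is what makes the quadrature below produce an arc-length parametrization with no extra work.

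For existence, fix $P_0\in X$ and an initial tangent $\tau_0\in S$, pick $w_0$ (unique mod the period of $\tau$) with $\tau(w_0)=\tau_0$, set $w(s):=w_0+\int_0^sk(t)\,dt$, and define
\[
\gamma(s):=P_0+\int_0^s\tau(w(t))\,dt,\qquad s\in[0,c].
\]
Then $\gamma'(s)=\tau(w(s))\in S$, so $\gamma$ is parametrized by Minkowski arc length, and unwinding the definition of the relevant curvature shows that its curvature function is $w'(s)=k(s)$; since $k$ is $C^1$, the curve $\gamma$ has the regularity needed for this curvature to be defined. Unlike the Euclidean Theorem~\ref{fundamental}, which is proved through an ODE, this is a plain quadrature, since the Frenet-type frame of a normed plane is encoded directly by the parametrizations $\varphi$ and $\psi$. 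Note also that the construction never divides by $k$, so no assumption on the sign or the zeros of $k$ is needed.

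For uniqueness, let $\gamma$ be any Minkowski-arc-length curve with the prescribed curvature function $k$. By the definition of the curvature type, writing $\gamma'(s)=\tau(w(s))$ forces $w'=k$, hence $w(s)=w(0)+\int_0^sk$; the value $w(0)$ is determined modulo the period by the initial tangent $\gamma'(0)$ through the injectivity of $\tau$, so $\gamma'$ is completely determined by $\gamma'(0)$, and then $\gamma(s)=\gamma(0)+\int_0^s\gamma'(t)\,dt$ is determined once $\gamma(0)$ is chosen. This settles all four cases simultaneously. (For the normal and arc-length curvatures one could alternatively invoke duality, namely that circular curvature is normal curvature in the anti-norm and arc-length curvature is Minkowski curvature in the anti-norm, once the first two cases are established.) I do not anticipate a genuine obstacle here; the only points meriting attention are the verification that each $\tau$ is a well-defined $C^1$ map onto $S$ that is injective on one period --- for $\tau=\dfrac{d\varphi}{dt}$ this rests on the strict convexity of $S$, and for the normal-curvature $\tau$ on the fact that the anti-norm is built smoothly from the norm --- together with the harmless bookkeeping caused by the periodicity of $\varphi$ and $\psi$.
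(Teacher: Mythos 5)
Your proof is correct, but it follows a genuinely different route from the paper's. The paper proves this theorem by falling back on the auxiliary Euclidean structure: it invokes the identities $k_m = k_e\,p(\theta)^3$, $k_n = k_e/k_\psi$, $k_c = k_e/k_\varphi$ and $k_l = k_e\,q(\theta)^3$ to translate the prescribed Minkowski-type curvature into a prescribed Euclidean curvature, and then applies the Euclidean fundamental theorem (Theorem \ref{fundamental}), whose proof rests on existence and uniqueness for a linear ODE system. You instead stay entirely intrinsic: you observe that each of the four curvatures is by definition the derivative of a parameter function $w$ along a fixed $C^1$ parametrization $\tau$ of $S$ (area-sector, arc-length, unit tangent of $S$, or the left-Birkhoff-orthogonal companion of the anti-circle parametrization), so that prescribing the curvature amounts to the two explicit quadratures $w = w_0 + \int k$ and $\gamma = P_0 + \int \tau(w)$; the key point making this work, which you correctly isolate, is that $\tau$ takes values in $S$, so the constructed curve is automatically in Minkowski arc-length parametrization. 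What your approach buys is a coordinate-free argument with an explicit formula for the curve and no appeal to ODE theory; what the paper's approach buys is brevity, since the Euclidean reduction formulas have already been established and are reused throughout. The points you flag as needing verification (that each $\tau$ is $C^1$, onto $S$, and injective modulo its period) are indeed the only delicate ones, and they all follow from the standing hypotheses that $S$ is $C^2$ and strictly convex: in particular, for the normal-curvature case the map $x \mapsto n(x)$ from $S$ to $S_a$ is a bijection because Birkhoff orthogonality is left- and right-unique in a smooth, strictly convex plane, and $|[x,\psi(v)]| = \|x\|\,\|\psi(v)\|_a = 1$ pins down the orientation. So your argument is complete as it stands.
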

\begin{proof} In view of  (\ref{minkeuclidcurv}), prescribing the Minkowski curvature yields the Euclidean curvature. Hence, we just have to apply Theorem \ref{fundamental}. For the normal and circular curvature types the argument is, in view of (\ref{normaleuclid}) and (\ref{circulareuclid}), precisely the same. For the arc-length curvature we just have to use (\ref{alceuclid}) together with a re-parametrization.

\end{proof}

In \cite{peri}, this theorem was extended for Biberstein's anti-curvature to
planes with convex unit balls which no longer need to be symmetric at the origin.\\

An \emph{isometry} between normed planes $(X,||\cdot||_X)$ and $(Y,||\cdot||_Y)$ is a surjective map $T:X\rightarrow Y$ such that $||Tu - Tv||_Y = ||u-v||_X$ for any $u,v \in X$. Any isometry of the Euclidean plane can be presented as combination of (translations, rotations, and) reflections. Regarding isometries of a normed plane, the Mazur-Ulam Theorem guarantees that they are linear up to translation (see \cite[Theorem 3.1.2]{thompson}), and so we will always work with linear isometries. In particular, any isometry of a normed plane is either \emph{orientation preserving}, namely when the sign of the fixed determinant form does not change under its action, or \emph{orientation reversing} otherwise. We also have the following important result.

\begin{lemma}\label{isometryantinorm} Let $(X,||\cdot||)$ be a normed plane with fixed determinant form $[\cdot,\cdot]$ and associated anti-norm $||\cdot||_a:X\rightarrow\mathbb{R}$, and let $T:X\rightarrow X$ be an isometry. Then $T$ is an isometry in the anti-norm.
\end{lemma}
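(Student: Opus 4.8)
The plan is to reduce the statement to the fact that an isometry $T$ of $(X,\|\cdot\|)$ either preserves or reverses the fixed determinant form up to a constant factor, and then to show that this constant is forced to be $\pm 1$. Concretely, first I would recall that the Mazur--Ulam theorem (already invoked in the excerpt) lets us assume $T$ is linear. A linear map on a two-dimensional space multiplies every determinant form by its determinant, so $[Tx,Ty] = \det(T)\,[x,y]$ for all $x,y \in X$. Thus the only thing that could go wrong is that $\det(T) \neq \pm 1$, in which case $T$ would rescale the anti-norm by $|\det(T)|$ rather than preserve it.

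Next I would pin down $|\det(T)| = 1$ using the fact that $T$ is a norm isometry and hence maps the unit circle $S$ onto itself. Since $T(B) = B$, the Lebesgue (or $\lambda$-)measure of $B$ is preserved, and because $T$ is linear, $\lambda(T(B)) = |\det(T)|\,\lambda(B)$. Hence $|\det(T)| = 1$. Combining this with $[Tx,Ty] = \det(T)\,[x,y]$ gives $[Tx,Ty] = \pm[x,y]$, the sign being $+$ when $T$ is orientation preserving and $-$ when it is orientation reversing, in accordance with the terminology fixed just before the lemma.

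Finally I would feed this into the definition of the anti-norm. For $x \in X$ we have, using $T(S) = S$ and the substitution $y = Tz$ (which ranges over $S$ as $z$ does),
\begin{align*}
\|Tx\|_a = \sup\{|[Tx,y]| : y \in S\} = \sup\{|[Tx,Tz]| : z \in S\} = \sup\{|\det(T)|\cdot|[x,z]| : z \in S\} = \|x\|_a,
\end{align*}
so $T$ is an isometry of $(X,\|\cdot\|_a)$ as claimed. (The same computation shows $T$ maps $S_a$ onto $S_a$.)

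I do not anticipate a genuine obstacle here; the only subtlety worth stating carefully is the appeal to linearity, which must be justified via Mazur--Ulam (and a reduction to the case $T(0)=0$ by composing with a translation, translations being trivially anti-isometries), and the remark that the orientation sign of $T$ as a norm isometry coincides with its orientation sign as an anti-norm isometry, since both are governed by the same scalar $\det(T)$. If anything needs care it is simply making sure the supremum manipulation is legitimate, which it is because $T$ is a bijection of $S$ onto itself.
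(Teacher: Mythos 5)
Your proof is correct and follows essentially the same route as the paper: both arguments hinge on the observation that $T(B)=B$ forces $|\det T|=1$, hence $|[Tx,Ty]|=|[x,y]|$, from which the invariance of the anti-norm follows. Your version is somewhat more explicit in the final step (computing $\|Tx\|_a=\|x\|_a$ directly from the definition of the anti-norm rather than arguing that $T(B_a)=B_a$), but the underlying idea is identical.
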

\begin{proof} Since $T$ is an isometry, it maps the unit ball onto the unit ball. Therefore, the absolute value of its determinant is $1$, and so it leaves the absolute value of the determinant form invariant: $|[x,y]| = |[T(x),T(y)]|$ for any $x,y \in X$. Because $T$ is an isometry, it leaves all properties of X, that are defined only in terms of the norm, invariant. Thus, if $B_a$ is the unit ball of the anti-norm, then $T(B_a)$ is also the unit ball of the anti-norm (not just any anti-norm ball, but the unit anti-ball, because $|[\cdot,\cdot]|$ stays invariant). This yields $T(B_a)=B_a$, which means that $T$ is also an isometry for the anti-norm.

\end{proof}

\begin{remark} Since the anti-norm of the anti-norm is the original norm, it follows that an isometry of the anti-norm must be also an isometry of the norm. \\
\end{remark}

It is clear that in the Euclidean plane two curves have the same curvature if and only if they are equal up to a \emph{rigid motion} (which we define here to be the composition of a translation with a rotation). Geometrically, they have the same shape, and there is a composition of a translation (which relates their respective initial points) and a rotation (relating their corresponding tangent vectors) yielding a coinciding position of them. When it comes to reflections, Euclidean curvature changes its sign, but its absolute value is preserved. In other words, an isometry of the Euclidean plane acts on the curvature of a curve preserving its absolute value and changing or maintaining its sign according to its orientation. This discussion raises the analogous question: what is the action of an isometry of a Minkowski plane on the curvatures of a given curve? In addition one may ask: what can be said on curves having (one or more) equal curvature(s)? 

\begin{teo} The Minkowski, normal, circular and arc-length curvatures of a curve are, up to the sign, invariant under an isometry of the considered normed plane. Their signs are preserved if and only if the isometry preserves orientation. 
\end{teo}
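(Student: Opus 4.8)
The plan is to argue directly from the Frenet-type formulas (\ref{frenet1}) and (\ref{frenet2}) rather than from the Euclidean expressions of the curvatures: an isometry $T$ of $(X,\|\cdot\|)$ need not be a Euclidean isometry, so it would change $k_e$ in a way one would then have to track together with the change of the polar function $p$, whereas the intrinsic formulas transform transparently. First I would invoke the Mazur--Ulam theorem (recalled before Lemma \ref{isometryantinorm}): $T$ is linear up to translation, and since all four curvatures are defined through derivatives of an arc-length parametrization, translations play no role, so we may assume $T$ linear. Two elementary facts about a linear isometry will be used throughout: (i) $T$ (and $T^{-1}$) preserve Birkhoff orthogonality, since $\|Tx\|\le\|Tx+tTy\|$ for all $t$ is equivalent to $\|x\|\le\|x+ty\|$ for all $t$; and (ii) $T$ maps the unit ball onto itself, so $\det T=\pm 1$, with $\det T=1$ precisely when $T$ preserves the fixed orientation. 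Set $\tilde\gamma:=T\circ\gamma$; if $\gamma$ is parametrized by Minkowski arc length then so is $\tilde\gamma$, and $\tilde\gamma'=T\gamma'$, $\tilde\gamma''=T\gamma''$.

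Next I would identify the right normal field of $\tilde\gamma$. From $\gamma'(s)\dashv_B n_\gamma(s)$ and fact (i) we get $\tilde\gamma'(s)\dashv_B Tn_\gamma(s)$, while $[\tilde\gamma'(s),Tn_\gamma(s)]=\det T\cdot[\gamma'(s),n_\gamma(s)]=\det T$. Since the plane is smooth and strictly convex, the direction Birkhoff orthogonal to $\tilde\gamma'(s)$ is unique, and the normalization $[\tilde\gamma'(s),\,\cdot\,]=1$ then pins down $n_{\tilde\gamma}(s)$; comparing with the displayed equalities forces $n_{\tilde\gamma}(s)=(\det T)\,Tn_\gamma(s)$. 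Substituting into (\ref{frenet1}) gives $\tilde\gamma''(s)=T\gamma''(s)=k_m(s)\,Tn_\gamma(s)=(\det T)k_m(s)\,n_{\tilde\gamma}(s)$, so the Minkowski curvature of $\tilde\gamma$ is $(\det T)k_m$. For the normal curvature, differentiate $n_{\tilde\gamma}=(\det T)Tn_\gamma$ and apply (\ref{frenet2}): $n_{\tilde\gamma}'(s)=(\det T)Tn_\gamma'(s)=-(\det T)k_n(s)\,T\gamma'(s)=-(\det T)k_n(s)\,\tilde\gamma'(s)$, whence the normal curvature of $\tilde\gamma$ equals $(\det T)k_n$.

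For the circular and arc-length curvatures I would invoke Lemma \ref{isometryantinorm}, so that $T$ is also an isometry of the anti-norm; since the anti-norm is built from the same determinant form, $T$ is orientation-preserving for the anti-norm exactly when it is for the norm, and with the same value $\det T$. Because the circular curvature is the normal curvature taken in the anti-norm and the arc-length curvature is the Minkowski curvature taken in the anti-norm, running the two computations of the previous paragraph verbatim inside the anti-norm (using the anti-norm arc-length parametrization, which $T$ again preserves) shows that $k_c$ and $k_l$ are likewise multiplied by $\det T$. Collecting the four identities $k^{\tilde\gamma}=(\det T)\,k^{\gamma}$ yields the invariance of the absolute values (since $|\det T|=1$) and shows that the signs are preserved when $T$ preserves orientation and reversed otherwise; for a curve that is not a line segment this gives the stated equivalence. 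The only delicate point is the bookkeeping of the factor $\det T$ through the passage to the anti-norm; everything else is a direct substitution into the Frenet formulas.
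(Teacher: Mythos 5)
Your proposal is correct and follows essentially the same route as the paper: reduce to a linear isometry, determine how $T$ acts on the right normal field (your identity $n_{T\circ\gamma}=(\det T)\,Tn_{\gamma}$ is just a repackaging of the paper's equation $u_{\gamma}'(s)T(n_{\gamma}(s))=u_{\sigma}'(s)n_{\sigma}(s)$ together with its sign analysis via the determinant form), substitute into the Frenet formulas (\ref{frenet1}) and (\ref{frenet2}), and invoke Lemma \ref{isometryantinorm} to transfer the result to the anti-norm for the circular and arc-length curvatures.
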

\begin{proof} Let $\gamma(s):[0,l(\gamma)]\rightarrow X$ be a curve parametrized by arc length, and let $T:X\rightarrow X$ be an isometry. Let $\sigma(s):[0,l(\gamma)]\rightarrow X$ be the image curve $\sigma(s) = T(\gamma(s))$. We start with the Minkowski curvature, which we shall denote by $k_{m,\gamma}$ and $k_{m,\sigma}$ for $\gamma$ and $\sigma$, respectively. Notice that since $T$ is an isometry, $s$ is an arc-length parameter of $\sigma$ as well. Therefore, in order to obtain the Minkowski curvatures of the curves we shall proceed as usual and let $\varphi(u):[0,2\lambda(S)]\rightarrow X$ be an area parametrization of the unit circle. Next, we let $u_{\gamma}, u_{\sigma}:[0,l(\gamma)]\rightarrow\mathbb{R}$ be functions such that $\gamma'(s) = \varphi(u_{\gamma}(s))$ and $\sigma'(s) = \varphi(u_{\sigma}(s))$ for any $s \in [0,l(\gamma)]$. Since $T$ is linear, we have
\begin{align*} T(\gamma''(s)) = \sigma''(s) = u_{\sigma}'(s)\frac{d\varphi}{du}(u_{\sigma}(s)) = u_{\sigma}'(s)n_{\sigma}(s).
\end{align*}
On the other hand, since $\gamma''(s) = u_{\gamma}'(s)n_{\gamma}(s)$, we have the equality
\begin{align} \label{eqcurvs} u_{\gamma}'(s)T(n_{\gamma}(s)) = u_{\sigma}'(s)n_{\sigma}(s).
\end{align}
From Lemma \ref{isometryantinorm} we have $||T(n_{\gamma}(s))||_a = ||n_{\gamma}(s)||_a = 1$. It follows that
\begin{align*} |u'_{\gamma}(s)| = |u'_{\sigma}(s)|,
\end{align*}
that is, $|k_{m,\sigma}(s)| = |k_{m,\gamma}(s)|$. To verify that the sign is preserved if and only if $T$ is orientation preserving, one just has to notice that
\begin{align*} 1 = [\sigma'(s),n_{\sigma}(s)] = \pm[T(\gamma'(s)),T(n_{\gamma}(s))],
\end{align*}
with the same sign for any $s \in [0,l(\gamma)]$, where the positive one appears when $T$ is orientation preserving, and the negative one occurs otherwise. Therefore, $n_{\sigma}(s)$ has the same orientation as $T(n_{\gamma}(s))$ if and only if $T$ is orientation preserving. Now equality (\ref{eqcurvs}) gives the desired regarding signs.\\

Let now $k_{n,\gamma}$ and $k_{n,\sigma}$ be the normal curvatures of $\gamma$ and $\sigma$, respectively. We just write down the equations
\begin{align*} n'_{\gamma}(s) = -k_{n,\gamma}(s)\gamma'(s) \  \mathrm{and} \\ n_{\sigma}'(s) = -k_{n,\sigma}(s)\sigma'(s).
\end{align*}
Since $\sigma'(s) = T(\gamma'(s))$ and $n_{\sigma}(s) = \pm T(n_{\gamma}(s))$, where the sign depends on whether or not $T$ is orientation preserving, it follows that
\begin{align*} -k_{n,\sigma}(s)\sigma'(s) = n'_{\sigma}(s) = \pm T(n_{\gamma}(s)) = \pm k_{n,\gamma}(s)T(\gamma'(s)) = \pm k_{n,\gamma}(s)\sigma'(s).
\end{align*}
Therefore $k_{n,\sigma}(s) = \pm k_{n,\gamma}(s)$, where the positive sign stands if and only if $T$ is orientation preserving. \\

For the arc-length curvature and the circular curvature, just recall that they are, respectively, the Minkowski curvature and the normal curvature in the anti-norm. Since an isometry of the norm is also an isometry of the anti-norm, we have that the absolute value of the circular curvature is invariant under an isometry. For the sign, one just has to notice that the orientation of a linear map is not depending on the norm of the plane. 

\end{proof}
 
We deal now with the converse question: if two curves have the same curvature function, does then exist an isometry that carries one onto the other? The answer is no for each curvature type. To provide counterexamples for circular and normal curvatures, the idea is to consider portions of the unit circle and unit anti-circle, respectively, which do not \emph{coincide} (i.e., cover each other) under a linear isometry. For this sake, we need the following result (which is interesting for itself, since it is a metric characterization of the Euclidean plane among all Minkowski planes). 

\begin{prop} If any two arcs of equal length of the unit circle of a normed plane coincide under some orientation preserving linear isometry, then the plane is Euclidean. 
\end{prop}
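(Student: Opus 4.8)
The plan is to exploit the hypothesis to force a strong symmetry of the unit circle, and then invoke a known characterization of the Euclidean plane among normed planes by such symmetry. Concretely, I would argue that if \emph{any} two arcs of equal length of $S$ coincide under an orientation-preserving linear isometry, then in particular the arc length traveled along $S$ between a point $\varphi(t)$ and the point $\varphi(t')$ whose tangent direction is Birkhoff orthogonal to that of $\varphi(t)$ is the same for every $t$. Indeed, let $\varphi(t):[0,l(S)]\to X$ be an arc-length parametrization of $S$. For each $t$ there is a unique $t^*$ with $\varphi'(t)\dashv_B\varphi'(t^*)$ (uniqueness by smoothness and strict convexity). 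The function $t\mapsto t^*-t\pmod{l(S)}$ is continuous; I want to show it is constant.

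To see this, take any two parameters $t_1,t_2$. By hypothesis the two arcs of $S$ from $\varphi(t_1)$ to $\varphi(t_1+\delta)$ and from $\varphi(t_2)$ to $\varphi(t_2+\delta)$ coincide under some orientation-preserving linear isometry $T$; moreover, letting $\delta$ range, I can pin down $T$ as the (unique, by strict convexity) isometry with $T\varphi(t_1)=\varphi(t_2)$ and $T\varphi'(t_1)=\varphi'(t_2)$, so that $T\varphi(t_1+\delta)=\varphi(t_2+\delta)$ for all $\delta$, i.e. $T$ implements a ``shift by $t_2-t_1$'' along $S$. Since a linear isometry preserves Birkhoff orthogonality (it maps $S$ to $S$ and supporting lines to supporting lines), $T$ maps the Birkhoff-orthogonal direction at $\varphi(t_1)$ to that at $\varphi(t_2)$; combined with the shift property this gives $t_1^* - t_1 \equiv t_2^* - t_2 \pmod{l(S)}$. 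Hence $t^*-t\equiv c$ is constant. Applying the relation twice, $\varphi'(t)\dashv_B\varphi'(t+c)$ and $\varphi'(t+c)\dashv_B\varphi'(t+2c)$; since $\varphi'(t+c)$ is (up to sign) a rotation of $\varphi'(t)$ by the Birkhoff relation and $-\varphi'(t)$ is also Birkhoff orthogonal to the tangent at $\varphi(t+c)$ on the opposite side, one gets $2c\equiv l(S)/2$, i.e. $c=l(S)/4$, so that Birkhoff orthogonality is realized by traveling a fixed quarter of the perimeter of $S$.

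From here the argument parallels the end of the proof of the theorem on circles of constant Minkowski curvature in Section~6. Since traversing a fixed arc length $l(S)/4$ of $S$ always produces the Birkhoff-orthogonal direction, and arc length along $S$ (in the Euclidean auxiliary structure, or intrinsically) is distributed so that conjugate diameters cut $S$ into four arcs of equal length, strict convexity and smoothness give that two nonzero vectors are Birkhoff orthogonal if and only if the diameters in their directions divide $B$ into four arcs of equal length. By the same citation used earlier (\cite{alonso1997area}, Proposition 3; see also \cite{Ma-Wu2010}) this property — equivalently, that conjugate diameters bound four sub-regions of $B$ of equal area — characterizes the Euclidean plane. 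Alternatively, one may phrase the conclusion via the equiframed/Radon curve dichotomy: the fixed-quarter-arc property forces $S$ to be a Radon curve in which Birkhoff orthogonality is the ``rotation by a quarter perimeter'', and a standard argument (or the cited results) upgrades Radon to Euclidean. This yields that the norm is Euclidean.

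The main obstacle is the first step: extracting a global, pointwise statement (the constancy of $t^*-t$) from the purely existential hypothesis that \emph{some} isometry matches each pair of equal-length arcs. The delicate point is showing that the matching isometry is forced to be the ``arc-length shift'' and that it respects the Birkhoff-orthogonality pairing in an orientation-consistent way; here one uses that equal-length arcs of all lengths $\delta$ coincide simultaneously (so the isometry is independent of $\delta$), that $T$ is orientation preserving (to rule out the reflection that would reverse the arc), and that strict convexity makes the tangent directions along $S$ injective so that $T$ is uniquely determined and genuinely acts as a shift on the parameter. Once constancy of $c$ is in hand, the remainder is a routine invocation of already-cited characterizations.
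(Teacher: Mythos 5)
Your opening step is sound and essentially reproduces the paper's: an orientation\-/preserving linear isometry matching two equal-length arcs must preserve the induced orientation of $S$ and arc length along $S$, hence acts on all of $S$ as the shift $\varphi(t)\mapsto\varphi(t+(t_2-t_1))$; and since linear isometries preserve Birkhoff orthogonality together with its orientation, the quantity $t^*-t$ is indeed a constant $c$. The genuine gap is the next step, the claim that $2c\equiv l(S)/2$, i.e.\ $c=l(S)/4$. Writing $b(x)$ for the right Birkhoff-orthogonal unit vector of $x\in S$, your constancy statement says $b(\varphi(t))=\varphi(t+c)$, and the quarter-perimeter claim is exactly $b^2=-\mathrm{id}$. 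But $b^2=-\mathrm{id}$ is equivalent to the symmetry of Birkhoff orthogonality, i.e.\ to the plane being Radon; your phrase ``$-\varphi'(t)$ is also Birkhoff orthogonal to the tangent at $\varphi(t+c)$'' is precisely this unproved symmetry, and in a general normed plane $b^2(x)\neq -x$, so nothing said so far pins down $c$. The same missing ingredient resurfaces at the end: the cited Proposition~3 of \cite{alonso1997area} characterizes the Euclidean plane by conjugate diameters cutting $B$ into four pieces of equal \emph{area}, whereas you only produce four arcs of equal \emph{length}; passing from one to the other again requires sector area to be proportional to arc length, which is once more the Radon property. Finally, ``a standard argument upgrades Radon to Euclidean'' is false as stated: Radon planes are in general not Euclidean, so some extra input beyond Radon is indispensable.

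The gap is closable, and the paper closes it in its very first sentence: a linear isometry has determinant $\pm 1$ and therefore preserves area, so two coinciding arcs of equal length bound sectors of equal area; hence sector area is proportional to arc length along $S$, i.e.\ $S$ is an equiframed curve, and a smooth equiframed curve is a Radon curve. With Radon in hand, $b^2=-\mathrm{id}$ holds, giving $2c\equiv l(S)/2$, and the translation from equal arcs to equal areas becomes legitimate, so your route then terminates correctly. (The paper's own endgame is different but equally short: it applies the isometry determined by $x\mapsto b(x)$, $b(x)\mapsto b^2(x)=-x$ to get $||x+b(x)||=||b(x)-x||$, so Birkhoff orthogonality of unit vectors implies isosceles orthogonality, a known characterization of the Euclidean plane.)
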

\begin{proof} First, notice that a plane with this property must be Radon, since the area of a sector of the unit ball must be proportional to the arc length of the corresponding arc of the unit circle. Now notice that if any two arcs of equal length can be carried one onto the other under an orientation preserving linear isometry, then any two points of the unit circle can be carried one onto the other by a linear isometry. Denote by $b(\cdot)$ the right Birkhoff orthogonal unit vector such that $[\cdot,b(\cdot)] > 0$. Therefore, given $x,y \in S$, any orientation preserving isometry $T$ such that $T(x) = y$ must accomplish $T(b(x)) = b(y)$, since any isometry preserves Birkhoff orthogonality. \\

It follows that, for any $x,y \in S$, the linear mapping defined by the equalities $T(x) = y$ and $T(b(x)) = b(y)$ is an isometry. We shall use this fact to prove the desired result with the help of a known characterization of the Euclidean plane. We say that two vectors $x,y \in X$ are \emph{isosceles orthogonal} (denoted $x \dashv_I y$) whenever $||x+y|| = ||x-y||$. If any Birkhoff orthogonal unit vectors are also isosceles orthogonal, then the plane is Euclidean (see \cite{alonso}). Let, then, $x,b(x) \in S$ be any pair of Birkhoff orthogonal vectors. The mapping defined by the equalities $T(x) = b(x)$ and $T(b(x)) = b^2(x)$ is an isometry, and since the plane is Radon, we have $b^2(x) = -x$. Therefore,

\begin{align*} ||x + b(x)|| = ||T(x+b(x))|| = ||b(x) - x||.
\end{align*}
Hence Birkhoff orthogonality of unit vectors indeed implies isosceles orthogonality of these vectors. This concludes the proof. 

\end{proof}

As we have mentioned, this proposition guarantees that in any non-Euclidean Minkowski plane we must have curves with equal normal or circular curvatures which do not coincide under an isometry of the plane. Indeed, any arc of the unit circle has circular curvature equal to $1$, and hence we just have to choose two arcs which cannot be carried one onto the other by an isometry. For the normal curvature we do the same, but replacing the unit circle by the unit anti-circle. \\

As a matter of fact, the group of isometries of a Minkowski plane is typically ``narrow"; see, e.g., pp. 75--76 from \cite{thompson}, and \cite{Ma-Spi-Stra}. For that reason, the task of finding curves with the same curvature which do not coincide under an isometry is not difficult. Take, for example, any usual $l_p$ norm in $\mathbb{R}^2$, with $2 < p < +\infty$, whose unit ball is illustrated below. \\

\begin{figure}[h]
\centering
\includegraphics{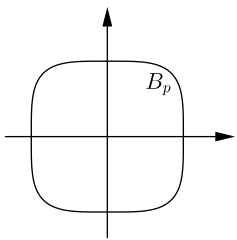}
\caption{The unit ball $B_p$ of an $l_p$ norm.}
\label{lpnorm}
\end{figure} 

It is easy to see that, if $2 < p < +\infty$, then the isometries of an $l_p$ plane are compositions of translations, reflections and rotations by multiples of $\pi/4$. Therefore, any two curves with the same curvature (any type), but whose initial tangent vectors form an angle which is not a multiple of $\pi/4$, do not coincide under an isometry of the plane. 

\section{Extremal curvature values and constant width}

There are many results regarding the curvature function of a closed, convex curve (which we will always assume to be at least of class $C^2$) in the Euclidean plane and its geometric properties. As a simple example, it is well known that convexity itself is related to the curvature function. Namely, a simple, closed curve is strictly convex if and only if its Euclidean curvature is (up to re-orientation of the parameter) strictly positive. This can be easily carried over to a normed plane: indeed, if any curvature type is strictly positive, then the Euclidean curvature is strictly positive. This is an immediate consequence of the equalities (\ref{minkeuclidcurv}), (\ref{normaleuclid}), and (\ref{circulareuclid}). \\

The \emph{four vertex theorem} states that the Euclidean curvature function of any closed, convex curve must have at least four local extrema (for a nice exposition of the topic see \cite{deturck}). This is possibly the most known and studied result regarding the curvature function of closed, convex plane curves. Versions of the four vertex theorem for normed planes were already studied. Heil \cite{heil} reduced the four vertex theorem in relative and Minkowski geometry to a generalization of a theorem on lacunary Fourier series. In \cite{tabachnikov}, Tabachnikov dealt with the circular curvature adopting a contact geometry point of view. In the introduction of this paper, one can also find some older references for the four vertex theorem in normed planes. Petty \cite{Pet} proved the four vertex theorem for Minkowski curvature, and we claim here that his method can be used to prove this theorem also for normal and circular curvatures. The chief ingredient of the proof is the following theorem.

\begin{teo}[Wilhelm S\"{u}ss, \cite{suss}] Two distinct points of a simple, closed and convex curve are said to be \emph{opposite} if the tangent lines to the curve at them are parallel. A simple, closed and strictly convex curve has at least three pairs of opposite points with the same Euclidean curvature. 
\end{teo}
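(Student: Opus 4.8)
The plan is to reduce the statement to a sign-change count for a single $2\pi$-periodic function, exactly in the spirit of Hurwitz's proof of the four vertex theorem. Since the curve is $C^2$ and strictly convex, its Euclidean curvature $k_e$ is continuous and strictly positive, so the assignment carrying a point of the curve to the angle $\theta$ of its outward unit normal is a homeomorphism onto angles modulo $2\pi$; I would take $\theta$ as the parameter. In this parametrization two points are opposite precisely when their normal angles differ by $\pi$, the radius of curvature $\rho(\theta):=k_e(\theta)^{-1}=h(\theta)+h''(\theta)$ (with $h$ the support function) is a continuous, strictly positive, $2\pi$-periodic function, and two opposite points carry the same Euclidean curvature exactly when $\rho(\theta)=\rho(\theta+\pi)$. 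So it suffices to find at least three values $\theta_0\in[0,\pi)$ with $\rho(\theta_0)=\rho(\theta_0+\pi)$.

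I would set $g(\theta):=\rho(\theta)-\rho(\theta+\pi)$, which is continuous, $2\pi$-periodic, and antiperiodic of period $\pi$: $g(\theta+\pi)=-g(\theta)$. The zero set of $g$ is therefore invariant under $\theta\mapsto\theta+\pi$, so a lower bound of $2N$ on the number of sign changes of $g$ on $[0,2\pi)$ yields $N$ genuinely distinct opposite pairs with equal curvature. Two families of linear constraints are available for $g$. First, antiperiodicity annihilates all even Fourier modes, so $\int_0^{2\pi}g\,d\theta=\int_0^{2\pi}g(\theta)\cos2\theta\,d\theta=\int_0^{2\pi}g(\theta)\sin2\theta\,d\theta=0$. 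Second, closure of the curve: since the curve with support function $h$ has velocity $\rho(\theta)(-\sin\theta,\cos\theta)$, integration over a full period gives $\int_0^{2\pi}\rho(\theta)\cos\theta\,d\theta=\int_0^{2\pi}\rho(\theta)\sin\theta\,d\theta=0$; substituting $\theta\mapsto\theta+\pi$ in the second of the two integrals that make up $\int_0^{2\pi}g(\theta)\cos\theta\,d\theta$ turns this into $\int_0^{2\pi}g(\theta)\cos\theta\,d\theta=\int_0^{2\pi}g(\theta)\sin\theta\,d\theta=0$ as well. Hence all Fourier coefficients of $g$ of orders $0$, $1$ and $2$ vanish.

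At this point I would invoke the Hurwitz sign-change lemma: a continuous $2\pi$-periodic function that is not identically zero and is orthogonal to every trigonometric polynomial of degree at most $m$ changes sign at least $2m+2$ times on $[0,2\pi)$. Applied with $m=2$ it gives the dichotomy: either $g\equiv0$, in which case all opposite pairs share their curvature and we are done \emph{a fortiori}; or $g$ changes sign at least six times on $[0,2\pi)$, hence at least three times on $[0,\pi)$ by the $\pi$-antiperiodicity, which produces the three asserted pairs of opposite points with equal Euclidean curvature.

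The only genuine content beyond bookkeeping is the Hurwitz lemma, which is precisely the ingredient underlying the classical four vertex theorem: were $g$ to have at most four sign changes, one could choose a nonzero trigonometric polynomial of degree at most $2$ vanishing exactly at those change points, so that its product with $g$ would have constant sign on $[0,2\pi)$ while integrating to zero by the order-$\le2$ orthogonality, a contradiction. The remaining matters to check are routine: that the normal-angle parametrization is a legitimate reparametrization (strict convexity and $C^2$ regularity), and that ``sign change'' is used so that distinct sign changes of $g$ descend to distinct opposite pairs. Sharpness of the number three is witnessed by $\rho(\theta)=1+\varepsilon\cos3\theta$ with $\varepsilon$ small, for which $g=2\varepsilon\cos3\theta$ has exactly three sign changes on $[0,\pi)$.
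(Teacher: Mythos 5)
The paper does not prove this statement at all: it is imported as a classical result of S\"{u}ss, with a citation, and is then used as the ``chief ingredient'' for the four vertex theorem in normed planes. So there is no in-paper argument to compare against; what you have supplied is a genuine, self-contained proof, and it is correct. Your reduction is the standard Hurwitz-type one: writing $\rho=h+h''$ in the normal-angle parametrization, the function $g(\theta)=\rho(\theta)-\rho(\theta+\pi)$ is $\pi$-antiperiodic (killing the Fourier modes of orders $0$ and $2$) and the closure condition $\int_0^{2\pi}\rho(\theta)(-\sin\theta,\cos\theta)\,d\theta=0$ kills the order-$1$ modes, so orthogonality to all trigonometric polynomials of degree at most $2$ forces either $g\equiv 0$ or at least six sign changes, hence at least three in $[0,\pi)$; the bookkeeping that pairs sign changes of $g$ with opposite pairs of equal curvature is right, and your example $\rho=1+\varepsilon\cos 3\theta$ correctly shows the bound is sharp. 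The only soft spot is the opening claim that $C^2$ plus strict convexity forces $k_e>0$ everywhere (a strictly convex $C^2$ curve can have isolated flat points, where $\rho$ is unbounded and $\theta$ fails to be a $C^1$ parameter); however, the paper itself adopts exactly this identification of strict convexity with strictly positive Euclidean curvature, so your argument is consistent with the conventions in force here, and under the usual ``oval'' hypothesis it is complete.
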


The Four Vertex Theorem for the Euclidean plane follows from this theorem by standard analysis arguments. Now the Four Vertex Theorems for all curvature types in a normed plane follows from an easy observation.

\begin{teo}[The four vertex theorem for normed planes] Let $\gamma:[0,c]\rightarrow X$ be a simple, closed and strictly convex curve in a normed plane. Then each curvature function of $\gamma$ has at least four local extrema. 
\end{teo}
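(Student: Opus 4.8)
The plan is to reduce the normed-plane four vertex theorem to S\"{u}ss's theorem, exactly as the text signals. Let $\gamma:[0,c]\rightarrow X$ be simple, closed, strictly convex. The key equalities are (\ref{minkeuclidcurv}), (\ref{normaleuclid}), (\ref{circulareuclid}), and (\ref{alceuclid}), each of which expresses a Minkowski curvature type at $\gamma(s)$ as the product of the Euclidean curvature $k_e(s)$ with a positive factor depending only on the tangent direction $\theta(s)$ of $\gamma$ at $\gamma(s)$. Explicitly, $k_m(s) = k_e(s)p(\theta(s))^3$, $k_n(s) = k_e(s)/k_\psi(\theta(s))$, $k_c(s) = k_e(s)/k_\varphi(\theta(s))$, and similarly for $k_l$ with the anti-circle's polar function. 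In all cases the direction-dependent factor is a fixed smooth positive $\pi$-periodic function of $\theta$.

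First I would invoke S\"{u}ss's theorem: the curve $\gamma$ has at least three pairs of opposite points, say $\gamma(s_i)$ and $\gamma(\bar s_i)$ for $i=1,2,3$, at which the Euclidean curvatures agree, $k_e(s_i)=k_e(\bar s_i)$. Opposite means the tangent lines at $\gamma(s_i)$ and $\gamma(\bar s_i)$ are parallel, so the tangent directions satisfy $\theta(\bar s_i)=\theta(s_i)+\pi$. Since the polar function $p$ is $\pi$-periodic, and likewise the Euclidean curvature functions $k_\varphi$, $k_\psi$ of the unit circle and anti-circle are $\pi$-periodic in the direction parameter, the direction-dependent factor takes the same value at $\theta(s_i)$ and at $\theta(\bar s_i)$. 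Therefore for each of the four curvature types $k_\bullet$ we get $k_\bullet(s_i)=k_\bullet(\bar s_i)$ for $i=1,2,3$: the given curvature function takes each of (at least) three values at least twice.

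Next I would run the standard analytic argument that converts ``three values attained twice'' into ``at least four local extrema.'' Parametrize $\gamma$ by Minkowski arc length on the circle $[0,c]$ (identifying endpoints), so $k_\bullet$ is a continuous periodic function, $C^1$ away from possible issues but in any case piecewise-smooth enough to have well-defined local extrema. A continuous periodic function that is not constant has an even number of local extrema (maxima and minima alternate around the circle), so it suffices to rule out exactly two. If $k_\bullet$ had only two local extrema, then the circle splits into two arcs on each of which $k_\bullet$ is monotone; hence every value in the open range interval is attained exactly twice and the extreme values exactly once. But we have three distinct points $s_1,s_2,s_3$ (which we may take with distinct curvature values — if two of the three prescribed values coincided we could perturb or simply note S\"{u}ss already gives three \emph{pairs}, hence at least three curvature values or a value attained four times) together with their opposite partners giving six points where $k_\bullet$ takes only three values; combined with a monotonicity-on-two-arcs structure this forces the two "monotone" arcs each to attain two of the three values with multiplicity, which is incompatible with the count unless $k_\bullet$ is constant. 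If $k_\bullet$ is constant there is nothing to prove in the trivial sense, but a constant Minkowski/normal/circular/arc-length curvature forces $\gamma$ to be (an arc of) a circle or anti-circle or centroid curve, which, being closed, still has infinitely many extrema in the degenerate sense — and in any event we may exclude this case or treat it as vacuously satisfying the conclusion with the convention that a constant function's extrema are handled separately.

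The main obstacle I anticipate is not conceptual but bookkeeping: making the "three values attained twice $\Rightarrow$ four extrema" step fully rigorous, including the degenerate situations where two of S\"{u}ss's three curvature values coincide or where $k_\bullet$ fails to be strictly monotone on the complementary arcs. The clean way around this is the classical lemma (used already in the Euclidean proof) that if a continuous periodic function assumes some value at (at least) three points with the same sign of "crossing," or assumes each of three values at least twice, then it has at least four local extrema; I would cite this as the standard analysis step (referencing the same source \cite{deturck} used earlier for the Euclidean case) rather than reprove it, and simply note that the $\pi$-periodicity of $p$, $k_\varphi$, and $k_\psi$ is exactly what lets the Euclidean input transfer verbatim.
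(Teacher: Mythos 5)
Your proposal is correct and follows essentially the same route as the paper: invoke S\"{u}ss's theorem, observe that each curvature type equals the Euclidean curvature times a $\pi$-periodic direction-dependent factor (so the three opposite pairs with equal Euclidean curvature also have equal Minkowski, normal, circular, and arc-length curvature), and then conclude by the standard analysis argument. The paper leaves that last step as ``standard analysis arguments,'' exactly as you propose to do by citation, so your extra bookkeeping discussion is more than the paper itself supplies.
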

\begin{proof} From the equalities (\ref{minkeuclidcurv}), (\ref{alceuclid}), (\ref{normaleuclid}), and (\ref{circulareuclid}) it follows immediately that $\gamma$ has at least three pairs of opposite points with respectively equal Minkowski (or normal, or circular) curvature. Indeed, these pairs will be exactly the same as the ones for the Euclidean curvature. Thus, from standard analysis arguments we have that each curvature function of $\gamma$ has at least four local extrema.

\end{proof}

Since we are working with curves and unit circles of class $C^2$, the circular curvature function of a simple, closed, strictly convex curve must have absolute maximum and minimum.  They are clearly associated to the smallest and to the largest circles of curvature (in the sense of curvature radius), respectively. The points of maximum and minimum \emph{normal} curvature are, of course, associated to the smallest and the largest \emph{anti-circles} of curvature, since the normal curvature is the circular curvature in the anti-norm. The natural questions that appear are: Does the region bounded by a curve contain its smallest circle of curvature? Is a curve always contained within the region bounded by its largest curvature circle? Guggenheimer \cite{Gug1} tackled those questions, and we shall obtain similar results as a consequence of the following, more general (but also intuitive) theorem.

\begin{teo}\label{inclusion} Let $\gamma,\sigma:[0,2\pi]\rightarrow X$ be simple, closed and strictly convex curves of class $C^2$ which have the same initial point and the same initial tangent direction. Assume that these curves are parametrized by the (oriented) angle of their respective tangent vectors with some fixed direction, and that they are positively oriented. If $k_{c,\gamma}(\theta) \leq k_{c,\sigma}(\theta)$ for each $\theta \in [0,2\pi]$ , then the curve $\sigma$ is contained in the region bounded by the curve $\gamma$. 
\end{teo}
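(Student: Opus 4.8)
The plan is to express both curves in terms of their (Euclidean) support functions and reduce the containment to a pointwise inequality between them. Since $\gamma$ and $\sigma$ are simple, closed, strictly convex curves of class $C^2$, each admits a support function; write $h_\gamma,h_\sigma:[0,2\pi]\to\mathbb{R}$ for the support functions with respect to the fixed determinant form and the auxiliary Euclidean structure, where $h(\theta)$ is evaluated at the boundary point whose outer normal makes angle $\theta$ with the fixed direction. Containment of $\sigma$ in the region bounded by $\gamma$ is equivalent to $h_\sigma(\theta)\le h_\gamma(\theta)$ for all $\theta$, once we have arranged that the two curves have a common boundary point with a common tangent there (which is exactly the hypothesis about the same initial point and initial tangent direction, together with the common orientation and the common parametrization by tangent angle). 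So the task is: translate the curvature inequality $k_{c,\gamma}(\theta)\le k_{c,\sigma}(\theta)$ into a statement about $h_\gamma-h_\sigma$, and then integrate.

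First I would set up the dictionary between the angular parameter used here and the Euclidean arc length. If $\gamma$ is parametrized by the angle $\theta$ of its tangent vector with the fixed direction, then the outer normal direction is $\theta+\pi/2$ (up to the orientation convention), and by the classical formula the Euclidean radius of curvature of $\gamma$ at that point is $h_\gamma''(\theta+\pi/2)+h_\gamma(\theta+\pi/2)$; equivalently the Euclidean curvature is $k_{e,\gamma}=[h_\gamma''+h_\gamma]^{-1}$. Next, formula (\ref{circulareuclid}) gives $k_{c,\gamma}(\theta)=k_{e,\gamma}(\theta)/k_\varphi(\theta)$, where $k_\varphi(\theta)$ is the Euclidean curvature of the unit circle at the point supported by the direction $\theta$, i.e.\ $k_\varphi(\theta)=[p(\theta)^{-1}{}''+p(\theta)^{-1}]^{-1}$ in the notation of (\ref{polar}); crucially this factor $k_\varphi$ is the same for $\gamma$ and for $\sigma$, because it depends only on the ambient norm and the tangent direction. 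Therefore the inequality $k_{c,\gamma}(\theta)\le k_{c,\sigma}(\theta)$ is equivalent to $k_{e,\gamma}(\theta)\le k_{e,\sigma}(\theta)$, i.e.\ to
\begin{align*}
 h_\gamma''(\theta)+h_\gamma(\theta)\ \ge\ h_\sigma''(\theta)+h_\sigma(\theta),\qquad \theta\in[0,2\pi],
\end{align*}
after the harmless shift of the normal angle by $\pi/2$. Setting $g:=h_\gamma-h_\sigma$, this reads $g''+g\ge 0$ everywhere, with the boundary data coming from the common initial point and tangent: $g(\theta_0)=0$ and $g'(\theta_0)=0$ at the starting normal angle $\theta_0$ (the common tangent forces both the zeroth and first order agreement of the support functions there).

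The remaining step is the differential-inequality argument: if $g''+g\ge 0$ on $[0,2\pi]$ with $g(\theta_0)=g'(\theta_0)=0$, then $g\ge 0$ on $[\theta_0,\theta_0+\pi]$. This is a standard comparison: writing $g(\theta)=\int_{\theta_0}^{\theta}\sin(\theta-u)\,(g''(u)+g(u))\,du$ via variation of parameters for the operator $\frac{d^2}{d\theta^2}+1$ with the given initial conditions, the kernel $\sin(\theta-u)$ is nonnegative for $u\le\theta\le u+\pi$, so $g\ge 0$ on a half-period; and since both curves are closed and convex and traversed once as $\theta$ runs over a full period, a half-period of normal directions already sweeps all of $\gamma$ and all of $\sigma$ (opposite boundary points correspond to $\theta$ and $\theta+\pi$), which is enough to conclude $h_\sigma\le h_\gamma$ on the whole circle of directions, hence $\sigma\subseteq\mathrm{int}\,\gamma\cup\gamma$. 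The main obstacle is bookkeeping rather than substance: one must be careful that the period of the tangent-angle parametrization is $2\pi$ for both curves (this uses that they are convex and simple, so the tangent turns monotonically by $2\pi$), that the orientation conventions make $k_\varphi$ cancel with the correct sign, and that the common initial point and tangent really do force $g(\theta_0)=g'(\theta_0)=0$ for the support functions and not merely for the curves; once those normalizations are pinned down, the Sturm-type comparison closes the argument.
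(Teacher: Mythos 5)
Your overall strategy is exactly the one the paper uses: pass to Euclidean support functions, use (\ref{circulareuclid}) to turn the circular-curvature inequality into $h_\gamma''+h_\gamma\ge h_\sigma''+h_\sigma$ (the factor $k_\varphi$ depending only on the tangent direction and hence cancelling), extract the initial conditions $g(\theta_0)=g'(\theta_0)=0$ from the common point and common tangent, and conclude by variation of parameters with the kernel $\sin(\theta-u)$, which is nonnegative on a half-period. Up to that point the proposal matches the paper's proof step for step.

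There is, however, a genuine gap in your last step. You claim that establishing $h_\sigma\le h_\gamma$ on the half-period $[\theta_0,\theta_0+\pi]$ already suffices because ``a half-period of normal directions sweeps all of $\gamma$ and all of $\sigma$.'' That is false: for a closed, strictly convex $C^2$ curve the \emph{outward} normal direction turns monotonically through the full angle $2\pi$ as the point traverses the curve once, so the directions in $[\theta_0,\theta_0+\pi]$ reach only half of the boundary points, and the support values $h(\theta)$ and $h(\theta+\pi)$ refer to two different (opposite) supporting lines. Containment of convex bodies requires $h_\sigma(\theta)\le h_\gamma(\theta)$ for \emph{every} $\theta\in[0,2\pi)$, and the inequality on a half-period does not imply it on the complementary half-period. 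The repair is easy and stays entirely within your framework (and is what the paper does): reverse the orientation of both curves and run the identical comparison starting again from $\theta_0$; the initial conditions $g(\theta_0)=g'(\theta_0)=0$ are unchanged, and integrating in the opposite direction produces the kernel $\sin(u-\theta)$, which is nonnegative on the other half-period $[\theta_0-\pi,\theta_0]$. Together the two half-periods give $h_\sigma\le h_\gamma$ on all of $[0,2\pi]$, and the containment follows.
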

\begin{proof} The strategy of the proof is to show that the (Euclidean) support function of $\gamma$ is always greater than or equal to the support function of $\sigma$. For the sake of simplicity, we shall adopt an auxiliary Euclidean structure positioned in such a way that the origin lies within the region bounded by $\sigma$, and the vector $(1,0)$ is the unit normal to $\gamma'(0)$. If $h_{\gamma}(\theta)$ and $h_{\sigma}(\theta)$ are the respective support functions of $\gamma$ and $\sigma$ (where the vector $(\cos\theta,\sin\theta)$ gives the outward pointing normal direction, as usual), then we may write
\begin{align*} \gamma(\theta) = h_{\gamma}(\theta)\cdot(\cos\theta,\sin\theta)+h_{\gamma}'(\theta)\cdot(-\sin\theta,\cos\theta) \ \ \mathrm{and} \\ 
\sigma(\theta) = h_{\sigma}(\theta)\cdot(\cos\theta,\sin\theta) + h_{\sigma}'(\theta)\cdot(-\sin\theta,\cos\theta),
\end{align*}
for $\theta \in [0,2\pi]$. Notice that since $\gamma(0) = \sigma(0)$ and both curves have the same normal direction at this point, it follows that 
\begin{align*} h_{\gamma}(0) = h_{\sigma}(0) \ \ \mathrm{and}\\
h'_{\gamma}(0) = h'_{\sigma}(0).
\end{align*}
From (\ref{circulareuclid}) we have that the \emph{Euclidean} curvatures of $\gamma$ and $\sigma$ are given by
\begin{align*} k_{e,\gamma}(\theta) = k_{\varphi}(\theta)k_{c,\gamma}(\theta) \ \ \mathrm{and} \\
k_{e,\sigma}(\theta) = k_{\varphi}(\theta)k_{c,\sigma}(\theta),
\end{align*} 
respectively, where $k_{\varphi}(\theta)$ is the Euclidean curvature of the unit circle at the point whose normal unit vector is $(\cos\theta,\sin\theta)$. Therefore, the support functions of $\gamma$ and $\sigma$ satisfy the respective differential equations
\begin{align*} h''_{\gamma}(\theta) + h_{\gamma}(\theta) = [k_{\varphi}(\theta)k_{c,\gamma}(\theta)]^{-1} \ \ \mathrm{and} \\
h''_{\sigma}(\theta) + h_{\sigma}(\theta) = [k_{\varphi}(\theta)k_{c,\sigma}(\theta)]^{-1}, 
\end{align*}
with equal initial conditions. Solving by variation of parameters, we can write the solution to the first equation as
\begin{align*} h_{\gamma}(\theta) = h_{\gamma}(0)\cos\theta + h'_{\gamma}(\theta)\sin\theta + \int_0^{\theta}[k_{\varphi}(u)k_{c,\gamma}(u)]^{-1}\sin(\theta - u) \ du,
\end{align*}
and the other equation has a completely analogous solution. Since the initial conditions coincide, we may write
\begin{align*} h_{\gamma}(\theta) - h_{\sigma}(\theta) = \int_0^{\theta}\left([k_{\varphi}(u)k_{c,\gamma}(u)]^{-1}-[k_{\varphi}(u)k_{c,\sigma}(u)]^{-1}\right)\sin(\theta - u) \ du \geq 0
\end{align*}
for every $\theta \in [0,\pi]$, where the inequality comes from the hypothesis on the circular curvatures. Now we notice that reversing the orientation of the curves and repeating the argument, we have the same result for the other portion of the curves (see Figure \ref{enclosedcurve}). Therefore, we have that $h_{\gamma}(\theta) \geq h_{\sigma}(\theta)$ for every $\theta \in [0,2\pi]$, and hence the region bounded by $\gamma$ contains the curve $\sigma$.  

\end{proof}

\begin{figure}[h]
\centering
\includegraphics{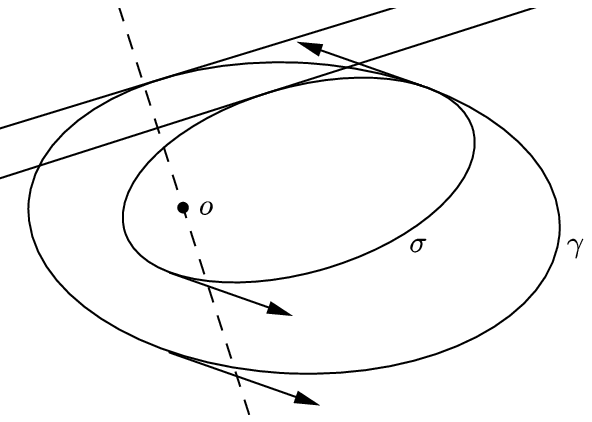}
\caption{Theorem \ref{inclusion}.}
\label{enclosedcurve}
\end{figure}

\begin{remark} Clearly, the theorem also holds if we consider the Minkowski curvature or the normal curvature instead of the circular curvature. Indeed, use (\ref{minkeuclidcurv}) and (\ref{normaleuclid}), respectively, instead of (\ref{circulareuclid}), and then the proof would be analogous.
\end{remark}

\begin{coro} Let $\gamma:[0,c]\rightarrow X$ be a simple, closed and strictly convex curve of class $C^2$. Then the region bounded by $\gamma$ contains its smallest circle and anti-circle of curvature. Moreover, $\gamma$ is contained in both regions bounded by its largest circle and anti-circle of curvature, respectively.
\end{coro}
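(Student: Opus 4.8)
The plan is to deduce this corollary directly from Theorem \ref{inclusion} (and, for the anti-circular part, from the Remark following it), by taking as comparison curves the osculating circles at the \emph{vertices} of $\gamma$. Since $\gamma$ is simple, closed, strictly convex and of class $C^2$, we may reparametrize it by the angle $\theta \in [0,2\pi]$ of its tangent vector with a fixed direction, it is positively oriented after possibly reversing the parameter, and by (\ref{circulareuclid}) its circular curvature $k_{c,\gamma}$ is continuous and strictly positive. Hence $k_{c,\gamma}$ attains a maximum $k_{\max} = k_{c,\gamma}(\theta_1)$ and a minimum $k_{\min} = k_{c,\gamma}(\theta_0)$, with $0 < k_{\min} \leq k_{\max} < +\infty$; the corresponding osculating circles are, by definition, translates of $k_{\max}^{-1}S$ and $k_{\min}^{-1}S$, so they are simple, closed, strictly convex curves of class $C^2$ with constant circular curvatures $k_{\max}$ and $k_{\min}$, respectively.

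For the smallest circle of curvature, let $C$ be the osculating circle of $\gamma$ at $\gamma(\theta_1)$. Then $C$ shares its point and tangent direction with $\gamma$ at $\theta_1$, and $k_{c,\gamma}(\theta) \leq k_{\max} = k_{c,C}(\theta)$ for every $\theta$. Applying Theorem \ref{inclusion} with $\gamma$ playing the role of the outer curve and $C$ playing the role of the inner curve $\sigma$ (after translating the tangent-angle parameter so that $\theta_1$ becomes the initial value, and checking both curves are positively oriented), we obtain that $C$ lies in the region bounded by $\gamma$. For the largest circle of curvature, let $C'$ be the osculating circle of $\gamma$ at $\gamma(\theta_0)$; now $k_{c,C'}(\theta) = k_{\min} \leq k_{c,\gamma}(\theta)$ for every $\theta$, so Theorem \ref{inclusion} applied with $C'$ as the outer curve and $\gamma$ as the inner curve $\sigma$ yields that $\gamma$ lies in the region bounded by $C'$. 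The statements about the smallest and largest \emph{anti}-circles of curvature follow verbatim, replacing circular curvature by normal curvature throughout: one uses the Remark after Theorem \ref{inclusion} (which grants the analogous inclusion for the normal curvature) together with the fact that the normal curvature is the circular curvature in the anti-norm, so that extrema of the normal curvature correspond exactly to the smallest and largest anti-circles of curvature.

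The only point that requires genuine care is matching the osculating circle and $\gamma$ to the precise hypotheses of Theorem \ref{inclusion}: a common initial point, a common initial tangent direction, parametrization by tangent angle, and positive orientation. This is routine but not vacuous — one must observe that the osculating circle at a point is precisely the (unique) translate of a scaled unit circle agreeing with $\gamma$ there to first order, so choosing a vertex as the common base point automatically supplies the shared initial data, and the orientation check is immediate since translates and homothets of $S$ inherit its orientation. Beyond this bookkeeping, no new ideas are needed.
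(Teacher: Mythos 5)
Your argument is correct and is exactly the intended one: the paper's own proof of this corollary consists of the single line that it follows straightforwardly from Theorem \ref{inclusion}, and what you have written is precisely the routine instantiation of that theorem (osculating circle at the maximum of $k_c$ as the inner curve $\sigma$, osculating circle at the minimum as the outer curve, and the Remark for the anti-circle/normal-curvature case). The bookkeeping you flag about shared initial point, tangent direction, tangent-angle parametrization and orientation is handled appropriately.
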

\begin{proof} Due to Theorem \ref{inclusion}, the proof is straightforward.

\end{proof}

It is worth mentioning that, by different methods, Guggenheimer \cite{Gug1} proved that, under the same hypothesis, the region bounded by the curve contains at least two of its circles of curvature. Also, Ghandehari \cite{Ghan1} enunciated a related result as consequence of a theorem concerning the shape of the curves which have minimum length under certain conditions (namely, referring to initial, and final points, initial tangent direction, and constrained circular curvature). \\

We finish this section by revisiting a result by Petty \cite{Pet} on the important class of curves of \emph{constant width}. Let $\gamma:[0,c]\rightarrow X$ be a closed, simple and strictly convex curve. The \emph{width} of $\gamma$ in a given direction is the (Minkowski) distance between the two tangent lines to $\gamma$ at this direction (see \cite{martini2}). For a curve whose width is constant, it is known that the sum of the curvature radii of any pair of opposite points equals this width, see the discussion in $\S$ 6 of the survey \cite{Cha-Gro}. Using relative differential geometry, Chakerian \cite{chakerian} studied this property already for normed planes and spaces, see also \cite{hug} and the discussion in $\S$ 2.7 of \cite{martini2}. We reprove it now for the planar case, using our methods prepared here.

\begin{prop} Let $\gamma:[0,c]\rightarrow X$ be a simple, closed and strictly convex curve of constant width $d \in \mathbb{R}$, which is at least of class $C^2$. Then the sum of the curvature radii of any pair of opposite point equals $d$. 
\end{prop}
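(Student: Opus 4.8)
The plan is to pass to an auxiliary Euclidean structure (as in Section 5) and to rewrite both the circular curvature radius and the Minkowski width in terms of Euclidean support functions, so that the constant-width hypothesis turns into a simple functional identity. First I would fix a Euclidean structure on $X$ with the standard determinant form and parametrize $\gamma$ by the angle $\theta$ of its outward (Euclidean) normal, writing $\gamma(\theta) = h_\gamma(\theta)(\cos\theta,\sin\theta) + h_\gamma'(\theta)(-\sin\theta,\cos\theta)$ for the Euclidean support function $h_\gamma$, and similarly $\varphi(\theta) = h_S(\theta)(\cos\theta,\sin\theta) + h_S'(\theta)(-\sin\theta,\cos\theta)$ for the unit circle $S$ (both $C^2$ and with $h''+h>0$ by strict convexity). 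Since $\gamma'(\theta)$ and $\varphi'(\theta)$ are both positive multiples of $(-\sin\theta,\cos\theta)$, the identity $\gamma'(\theta)=\rho(\theta)\varphi'(\theta)$ from the direct description of circular curvature in Section \ref{secdefmink} shows that the curvature radius of $\gamma$ at $\gamma(\theta)$ is
\[
\rho(\theta) \;=\; k_c(\theta)^{-1} \;=\; \frac{h_\gamma''(\theta)+h_\gamma(\theta)}{h_S''(\theta)+h_S(\theta)},
\]
consistent with (\ref{circulareuclid}) and with $h''+h$ being the Euclidean curvature radius.

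Next I would compute the width. The opposite points of $\gamma$ are the pairs $\gamma(\theta),\gamma(\theta+\pi)$, with common tangent direction $w_\theta:=(-\sin\theta,\cos\theta)$; the two tangent lines there lie at Euclidean distance $h_\gamma(\theta)+h_\gamma(\theta+\pi)$. The Minkowski distance between two parallel lines of direction $w_\theta$ is realized along the segment that is Birkhoff orthogonal to $w_\theta$, and the Minkowski-unit vector Birkhoff orthogonal to $w_\theta$ is exactly $\varphi(\theta)$ (the boundary point of $B$ whose supporting line is parallel to $w_\theta$). Comparing Euclidean components in the normal direction $(\cos\theta,\sin\theta)$, and using $\langle\varphi(\theta),(\cos\theta,\sin\theta)\rangle = h_S(\theta)$, one gets that the width of $\gamma$ in this direction is $\bigl(h_\gamma(\theta)+h_\gamma(\theta+\pi)\bigr)/h_S(\theta)$; note that this is automatically $\pi$-periodic because $h_S$ is. Hence $\gamma$ has constant width $d$ if and only if
\[
h_\gamma(\theta)+h_\gamma(\theta+\pi) \;=\; d\,h_S(\theta), \qquad \theta\in[0,2\pi].
\]

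Finally I would apply the operator $\tfrac{d^2}{d\theta^2}+\mathrm{id}$ to this identity, which commutes with the shift $\theta\mapsto\theta+\pi$, obtaining
\[
\bigl(h_\gamma''(\theta)+h_\gamma(\theta)\bigr)+\bigl(h_\gamma''(\theta+\pi)+h_\gamma(\theta+\pi)\bigr) \;=\; d\bigl(h_S''(\theta)+h_S(\theta)\bigr).
\]
Because the unit ball is centrally symmetric, $h_S$ is $\pi$-periodic, so $h_S''(\theta+\pi)+h_S(\theta+\pi)=h_S''(\theta)+h_S(\theta)$; dividing by this positive quantity gives
\[
\rho(\theta)+\rho(\theta+\pi) \;=\; \frac{\bigl(h_\gamma''(\theta)+h_\gamma(\theta)\bigr)+\bigl(h_\gamma''(\theta+\pi)+h_\gamma(\theta+\pi)\bigr)}{h_S''(\theta)+h_S(\theta)} \;=\; d,
\]
which is the assertion. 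The step demanding the most care is the width computation: one must justify that the Minkowski distance between the two parallel tangent lines is attained along the Birkhoff-orthogonal direction, correctly identify that direction with the corresponding boundary point of $S$, and verify that the resulting expression is $\pi$-periodic as it must be; everything else is routine differentiation and bookkeeping.
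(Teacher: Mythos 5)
Your argument is correct, but it follows a genuinely different route from the paper's. You transport the classical Euclidean constant-width proof through the dictionary of Section 5: express both the curvature radius and the width via Euclidean support functions, turn the constant-width hypothesis into the linear identity $h_\gamma(\theta)+h_\gamma(\theta+\pi)=d\,h_S(\theta)$, and apply the operator $\tfrac{d^2}{d\theta^2}+\mathrm{id}$, using that $h_S''+h_S$ is $\pi$-periodic. The paper instead stays coordinate-free: it parametrizes $\gamma$ so that $\gamma'(u)=\rho(u)\varphi'(u)$ with $u$ the area parameter of $S$, first proves the geometric lemma that the chord joining a pair of opposite points is exactly $d\varphi(u)$ (i.e., it is left-normal, Birkhoff orthogonal to the common tangent direction), and then a single differentiation of $\gamma(u)-\gamma(u+\lambda(S))=d\varphi(u)$ gives $\rho(u)+\rho(u+\lambda(S))=d$. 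The paper's route yields as a by-product the statement about diametral chords being Birkhoff orthogonal to the tangents, which your computation bypasses; your route, in exchange, makes the mechanism transparent (linearity of the support-function characterization of width plus commutation of $\tfrac{d^2}{d\theta^2}+\mathrm{id}$ with the antipodal shift) and is closer to the relative-differential-geometry proofs of Chakerian cited in the paper. Two points you flagged do need the justification you sketch: the Minkowski distance between the two parallel supporting lines is indeed attained along a Birkhoff-orthogonal segment (immediate from the definition $\|x\|\leq\|x+tw\|$), and the division by $h_S''(\theta)+h_S(\theta)$ is legitimate because this quantity is the Euclidean curvature radius $k_\varphi(\theta)^{-1}$ of $S$, which is positive under the paper's standing assumptions on the unit circle. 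With those remarks made explicit, your proof is complete.
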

\begin{proof} First, assume that $\varphi(u):[0,2\lambda(S)]\rightarrow X$ is a parametrization of the unit circle by twice the area of the sector, and re-parametrize $\gamma$ in such a way that we have
\begin{align*} \gamma'(u) = \rho(u)\varphi'(u).
\end{align*}
Setting things this way, we have that $k_c(u) = \rho(u)^{-1}$, i.e., $\rho(u)$ is precisely the curvature radius of $\gamma$ at $\gamma(u)$. Our first step is to prove that the line joining a pair of opposite points is left normal to the tangent direction of $\gamma$ at these points. The idea is to consider, for each point $\gamma(u)$, the point $q(u)$ of the opposite tangent line such that $\gamma(u) - q(u) \dashv_B \gamma'(u)$. We may write
\begin{align*} q(u) = \gamma(u + \lambda(S)) + g(u)\gamma'(u),
\end{align*}
for some function $g:[0,2\lambda(S)]\rightarrow \mathbb{R}$. We shall prove that $g$ equals zero. For this purpose, notice that since $\gamma$ has constant width we have $\gamma(u) - q(u) = d\varphi(u)$ for each $u\in [0,2\lambda(S)]$. Now, differentiating both equalities yields
\begin{align*} -\rho(u+\lambda(S))\varphi'(u) + g'(u)\gamma'(u) + g(u)\gamma''(u) = \gamma'(u) - d\varphi'(u).
\end{align*} 
The right hand side is a vector in the direction of $\varphi'(u)$. Therefore, the left hand side must be the same, and hence we have $g(u) = 0$.\\

In view of this argument, we may write $\gamma(u) - \gamma(u+\lambda(S)) = d\varphi(u)$ for any $u \in [0,2\lambda(S)]$. Now, we derivate to obtain
\begin{align*} d\varphi'(u) = \gamma'(u) - \gamma'(u+\lambda(S)) = [\rho(u) + \rho(u+\lambda(s))]\varphi'(u).
\end{align*} 
Therefore, we have indeed $\rho(u) + \rho(u+\lambda(S)) = d$ for any $u \in [0,2\lambda(S)]$, what we wanted to show. 

\end{proof}

\begin{coro} Let $\gamma:[0,c]\rightarrow X$ be a closed, simple and strictly convex curve of constant width $d \in \mathbb{R}$. Then, 
\begin{align*}l(\gamma) = d\frac{l(S)}{2}.
\end{align*}  
Moreover, if each pair of opposite points divides $\gamma$ into two portions of equal length, then $\gamma$ is a Minkowski circle. 
\end{coro}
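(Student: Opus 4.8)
The plan is to recycle the parametrization and the curvature–radius identity established in the preceding proposition. First I would parametrize the unit circle $S$ by arc length, $\varphi(t):[0,l(S)]\rightarrow X$, and re-parametrize the (positively oriented) curve $\gamma$ so that $\gamma'(t)=\rho(t)\varphi'(t)$ with $\rho(t)>0$; as recalled in Section~\ref{secdefmink}, this $\rho(t)=k_c(t)^{-1}$ is exactly the curvature radius of $\gamma$ at $\gamma(t)$, and since $\|\varphi'(t)\|=1$ one gets $l(\gamma)=\int_0^{l(S)}\rho(t)\,dt$. Central symmetry of $S$ gives $\varphi(t+l(S)/2)=-\varphi(t)$, hence $\varphi'(t+l(S)/2)=-\varphi'(t)$, so the point opposite to $\gamma(t)$ is precisely $\gamma(t+l(S)/2)$ (indices read modulo $l(S)$). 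With this, the previous proposition says exactly $\rho(t)+\rho(t+l(S)/2)=d$ for all $t$.

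For the length formula I would split the integral at $l(S)/2$ and substitute $\tau\mapsto\tau+l(S)/2$ in the second half, obtaining
\[
l(\gamma)=\int_0^{l(S)/2}\rho(t)\,dt+\int_0^{l(S)/2}\rho\big(t+l(S)/2\big)\,dt=\int_0^{l(S)/2}\big(\rho(t)+\rho(t+l(S)/2)\big)\,dt=\frac{d\,l(S)}{2}.
\]

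For the second statement I would let $L(t)=\int_t^{t+l(S)/2}\rho(\tau)\,d\tau$ denote the length of the arc of $\gamma$ running from $\gamma(t)$ to its opposite point; the complementary arc then has length $l(\gamma)-L(t)=\tfrac{d\,l(S)}{2}-L(t)$. The hypothesis that opposite points bisect the perimeter forces $L(t)\equiv \tfrac{d\,l(S)}{4}$, and differentiating in $t$ (legitimate because $\gamma$ is $C^2$, so $\rho$ is continuous) yields $\rho(t+l(S)/2)-\rho(t)=0$. Combining this with $\rho(t)+\rho(t+l(S)/2)=d$ gives $\rho\equiv d/2$, i.e.\ $\gamma$ has constant circular curvature $2/d$; by the theorem identifying curves of constant circular curvature with arcs of Minkowski circles, and since $\gamma$ is closed, $\gamma$ is a full Minkowski circle of radius $d/2$.

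The only delicate point is the bookkeeping of parametrizations at the outset — checking that the ``opposite point'' genuinely corresponds to the shift by $l(S)/2$ in the arc-length-of-the-tangent parameter, and that the curvature–radius identity of the preceding proposition transfers verbatim to this parametrization. Once that is pinned down, both assertions reduce to the two short integral computations above.
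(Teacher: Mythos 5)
Your argument is correct, and it splits naturally into two halves that compare differently with the paper. For the length formula you do essentially what the paper does: split the integral over one full period at the half-period and invoke the sum-of-curvature-radii identity from the preceding proposition. The only cosmetic difference is that you use the arc-length parametrization of $S$ (so $\|\varphi'\|=1$ and the factor $\|\varphi'(u)\|$ disappears), whereas the paper keeps the area parametrization and carries $\|\varphi'(u)\|$ through the computation; your worry about transferring the identity $\rho(t)+\rho(t+l(S)/2)=d$ between parametrizations is legitimate but harmless, since the curvature radius and the notion of opposite points are parametrization-independent and central symmetry gives $\varphi(t+l(S)/2)=-\varphi(t)$ in either parametrization. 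For the second assertion your route is genuinely different and, frankly, cleaner. The paper argues by contradiction: it first establishes (via successive bisection) that opposite arcs of length $d\,l(S)/2^n$ have equal length, then picks a point with $\rho(u_0)>d/2$, a small dyadic arc around it, and derives a strict inequality between the lengths of two opposite arcs. Your argument instead introduces the half-perimeter function $L(t)=\int_t^{t+l(S)/2}\rho$, observes that the bisection hypothesis forces $L\equiv l(\gamma)/2$, and differentiates to get $\rho(t+l(S)/2)=\rho(t)$, which combined with the width identity gives $\rho\equiv d/2$ directly. This avoids the dyadic bookkeeping entirely, needs only continuity of $\rho$ (guaranteed by the standing $C^2$ and strict convexity assumptions), and lands immediately on the theorem classifying curves of constant circular curvature as arcs of Minkowski circles. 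Both proofs are sound; yours is the more economical of the two.
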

\begin{proof} For the first claim, we make a simple calculation:
\begin{align*} l(\gamma) = \int_0^{2\lambda(S)}||\gamma'(u)||du = \int_0^{2\lambda(S)}\rho(u)||\varphi'(u)||du = \\ = \int_0^{\lambda(S)}\rho(u)||\varphi'(u)|| + \rho(u+\lambda(S))||\varphi'(u+\lambda(S))||du = \int_0^{\lambda(S)}d||\varphi'(u)||du = d\frac{l(S)}{2}.
\end{align*}
For the other claim we have to do a little more. We say that two arcs of $\gamma$ are opposite if their endpoints are respectively opposite. By suitably choosing the initial point and dividing successively, one can easily note that for any arc with length $d\frac{l(S)}{2^n}$, $n \geq 1$, the opposite arc has the same length. Now, assume that $\gamma$ is not a Minkowski circle. Then its circular curvature is not constant, and we can find some $u_0 \in [0,2\lambda(S)]$ for which $\rho(u_0) > d/2$. By continuity, we may choose a number $\epsilon > 0$ such that $\rho(u) > d/2$ in $[u_0,u_0+\varepsilon]$ and the length of $\gamma$ between $\gamma(u_0)$ and $\gamma(u_0+\varepsilon)$ equals $d\frac{l(S)}{2^n}$ for some $n \in \mathbb{N}$. It follows that $\rho(u) < d/2$ for every $u \in [u_0+\varepsilon,u_0+\lambda(S)+\varepsilon]$. But then we have
\begin{align*} \int_{u_0}^{u_0+\varepsilon}\rho(u)||\varphi'(u)||du > \int_{u_0}^{u_0+\varepsilon}\frac{d}{2}||\varphi'(u)||du > \int_{u_0}^{u_0+\varepsilon}\rho(u+\lambda(S))||\varphi'(u)||du = \\ = \int_{u_0+\lambda(S)}^{u_0+\lambda(S)+\varepsilon}\rho(u)||\varphi'(u)||du.
\end{align*}
Since the first and the latter are lengths of two opposite arcs, we have a contradiction. Then $\gamma$ must be a Minkowski circle.

\end{proof}

\begin{remark} The same holds for a curve with constant \emph{anti-width} (i.e., the width in the anti-norm) if we consider the \emph{anti-curvature radius} (i.e., the inverse of the normal curvature).
\end{remark}

\section{Evolutes, involutes and parallels}

Since we have the notion of osculating circle related to the circular curvature (and, of course, the notion of osculating anti-circle, associated with the normal curvature), it is natural to define and study the \emph{evolute} of a given \emph{regular curve} (i.e., a curve whose tangent vector field does not vanish). Among the papers in which these and other related concepts were studied we may refer to \cite{haddou}, \cite{biberstein}, \cite{craizer}, \cite{Cra-Tei-Ba}, \cite{Pet}, and \cite{tabachnikov}. For the discrete framework, see \cite{craizermartini} and \cite{Tei-Cra-Ba}. \\

Let $\gamma:[0,c]\rightarrow X$ be a smooth regular curve whose circular curvature does not vanish (hence the other curvatures also do not vanish), and assume also that $X$ is a smooth normed plane. Suppose that, for the sake of simplicity, $\gamma$ is parametrized by arc length (denoted, as usual, by $s$). Also, assume again that $t(s)$ is the function such that $\gamma'(s) = \frac{d\varphi}{dt}(t(s))$, where $t$ is an arc-length parameter in the unit circle. We define the \emph{evolute} of $\gamma$ to be the curve $\xi:[0,c]\rightarrow X$ given by
\begin{align} \label{evolute} \xi(s) = \gamma(s) - \rho(s)\varphi(t(s)),
\end{align}
where $\rho(s) = k_c(s)^{-1}$ is the curvature radius of $\gamma$ at $\gamma(s)$, as defined previously. In other words, the evolute of $\gamma$ is the locus of all its \emph{curvature centers} (i.e., the centers of its osculating circles). We shall give an example to illustrate the idea.

\begin{example}\label{lp} Consider $\mathbb{R}^2$ endowed with the usual $l_p$ norm 
\begin{align*}||(x,y)||_p = \left(|x|^p+|y|^p\right)^{1/p},
\end{align*}
where $1 < p < +\infty$. Let $q \in \mathbb{R}$ be such that $1/p + 1/q = 1$, and consider the parametrization $t\in[0,1] \mapsto (t^{1/p},(1-t)^{1/p})$ of the portion of the unit circle which lies in the first quadrant (the other portions are simple reflections of this). A tedious calculation shows that the evolute of the portion in the first quadrant of the $l_q$ circle $S_q:=\{(x,y)\in\mathbb{R}^2:|x|^q+|y|^q = 1\}$ is the curve whose coordinates are
\begin{align*} x(t) = t^{\frac{1}{q}}-\left(\frac{p}{q}\right)^2\frac{t^{\frac{p}{q}-\frac{1}{q}}}{(1-t)^{\frac{p}{q}-\frac{1}{q}+1}}\left(1+\left(\frac{t}{1-t}\right)^{\frac{p}{q}}\right)^{\frac{1}{p}-2} and\\
y(t) = (1-t)^{\frac{1}{q}} - \left(\frac{p}{q}\right)^2\frac{t^{\frac{p}{q}-1}}{(1-t)^{\frac{p}{q}}}\left(1+\left(\frac{t}{1-t}\right)^{\frac{p}{q}}\right)^{\frac{1}{p}-2},
\end{align*}
where $t \in (0,1)$, and we take limits to define the evolute in the extremities of the interval. To obtain the evolute of the entire $l_q$ circle, one just has to reflect the curve above through the coordinate axis and through the origin. Figure \ref{evolutelp} illustrates this evolute for the case $p = 3$. \\
\end{example}

\begin{figure}[h]
\centering
\includegraphics{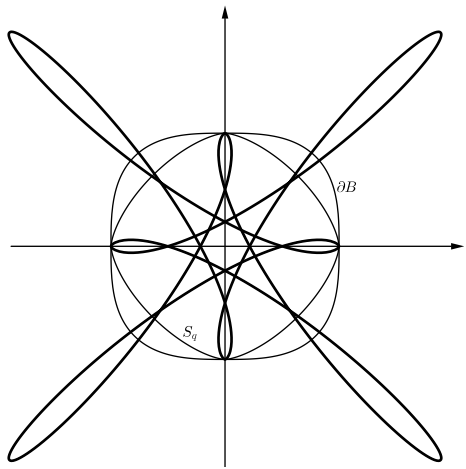}
\caption{The evolute of the $l_{\frac{3}{2}}$ circle in the $l_3$ norm.}
\label{evolutelp}
\end{figure}

Still considering the usual parametrization for the curve $\gamma$ and for the unit circle,  for each $s \in [0,c]$ we call the line $r \mapsto \gamma(s) + r\varphi(t(s))$ the \emph{left-normal line} of $\gamma$ at $\gamma(s)$. It is worth saying that Biberstein \cite{biberstein} and Craizer \cite{craizer} defined the evolute of a curve to be the envelope of its field of left-normal lines. It is easy to see that the definitions coincide (as it was made by both authors, indeed). One just has to differentiate (\ref{evolute}) to obtain
\begin{align}\label{evolutederivative} \xi'(s) = -\rho'(s)\varphi(t(s)),
\end{align}
and then $\xi'(s) \dashv_B \gamma'(s)$. In other words, the tangent line of the evolute at $\xi(s)$ is precisely the left-normal line of $\gamma$ at $\gamma(s)$. A third equivalent definition for the evolute of a curve was noticed by Craizer \cite{craizer} and comes from the viewpoint of singularity theory (see \cite{izumiya}).

\begin{prop} Define the \emph{squared distance function} of $\gamma$ to a fixed point $a \in X\setminus\{\gamma\}$ by $f(s) = ||\gamma(s) - a||^2$. Then the point $a$ lies in the left-normal line to $\gamma$ at $s_0$ if and only if $f'(s_0) = 0$ (i.e., $s_0$ is a singularity of type $A_{\geq 1}$ of $f$). Moreover, $a$ is the center of curvature of $\gamma$ at $s_0$ if and only if $f'(s_0) = f''(s_0) = 0$ (i.e., $s_0$ is a singularity of type $A_{\geq 2}$ of $f$). 
\end{prop}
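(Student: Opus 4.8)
The plan is to compute $f'$ and $f''$ directly, using only the norm, the determinant form $[\cdot,\cdot]$, and the duality properties of the anti-norm from Section~\ref{secdefmink}; I keep the notation of the paragraphs preceding (\ref{evolute}) ($\varphi$ the arc-length parametrization of $S$, $t(s)$, $\rho(s)=k_c(s)^{-1}$, $\xi$). Write $c(s):=\gamma(s)-a$, so $c(s)\neq 0$ for all $s$, and for $v\neq 0$ let $n_v\in S_a$ be the unique vector with $v\dashv_B n_v$ and $[v,n_v]>0$; in particular $n_{\gamma'(s)}$ is the right normal $n_\gamma(s)$. Since the anti-norm of the anti-norm is the original norm, $\|v\|=\sup\{[v,y]:y\in S_a\}$ with the supremum attained exactly at $n_v$, so $\|v\|=[v,n_v]$. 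Put $N(s):=n_{c(s)}$ (it is $C^1$ because $S$ is $C^2$ and strictly convex). Then $N(s)\in S_a$ for all $s$, so $N'(s)$ is tangent to $S_a$ at $N(s)$; and since the anti-norm reverses Birkhoff orthogonality, $c(s)\dashv_B N(s)$ gives $N(s)\dashv_B^a c(s)$, hence that tangent line is parallel to $c(s)$ and $[c(s),N'(s)]=0$. Differentiating $\|c(s)\|=[c(s),N(s)]$ therefore yields $\|c(s)\|'=[\gamma'(s),N(s)]$ and
\[ f'(s)=2\|c(s)\|\,[\gamma'(s),N(s)]. \]

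\emph{First equivalence.} As $\|c(s_0)\|\neq 0$, $f'(s_0)=0$ iff $[\gamma'(s_0),N(s_0)]=0$, i.e.\ iff $N(s_0)\parallel\gamma'(s_0)$; together with $c(s_0)\dashv_B N(s_0)$ this says $c(s_0)\dashv_B\gamma'(s_0)$. Since $\varphi(t(s_0))\dashv_B\varphi'(t(s_0))=\gamma'(s_0)$ and, by strict convexity and smoothness of $S$, the vectors Birkhoff orthogonal to $\gamma'(s_0)$ form the single line $\mathbb{R}\varphi(t(s_0))$, this is equivalent to $c(s_0)\in\mathbb{R}\varphi(t(s_0))\setminus\{0\}$, i.e.\ to $a=\gamma(s_0)-c(s_0)$ lying on the left-normal line $r\mapsto\gamma(s_0)+r\varphi(t(s_0))$. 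The converse is the same chain read backwards.

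\emph{Second equivalence.} Assume $f'(s_0)=0$ and write, from the first part, $a=\gamma(s_0)-r\,\varphi(t(s_0))$ with $r\neq 0$, so $c(s_0)=r\,\varphi(t(s_0))$ and $\|c(s_0)\|=|r|$. Differentiating $f'$ again and dropping the term carrying $[\gamma'(s_0),N(s_0)]=0$ leaves
\[ f''(s_0)=2\|c(s_0)\|\bigl([\gamma''(s_0),N(s_0)]+[\gamma'(s_0),N'(s_0)]\bigr). \]
In the first summand I would use the Frenet formula $\gamma''(s_0)=k_m(s_0)n_\gamma(s_0)$ from (\ref{frenet1}) and the identity $N(s_0)=\mathrm{sgn}(r)\,\gamma'(s_0)/[\varphi(t(s_0)),\varphi'(t(s_0))]$ (checked from the definition of $n_v$, using that $\pm\varphi(t(s_0))$ are antipodal points of $S$). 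For $N'(s_0)$, write $c(s)/\|c(s)\|=\varphi(w(s))$, so $N(s)=\varphi'(w(s))/[\varphi(w(s)),\varphi'(w(s))]$; differentiating and invoking the structural identity $\tfrac{d}{dt}\!\left(\varphi'(t)/[\varphi(t),\varphi'(t)]\right)=-\,k_m^S(t)\,\varphi(t)/[\varphi(t),\varphi'(t)]^{2}$ ($k_m^S$ the Minkowski curvature of $S$; a short computation from the Frenet formula for $\varphi$), together with $w'(s_0)=1/r$ (from $\|c(s_0)\|'=0$), expresses $N'(s_0)$ as an explicit multiple of $\varphi(t(s_0))$. After the constants cancel, both sign cases for $r$ collapse into
\[ f''(s_0)=\frac{2\,k_m(s_0)}{[\varphi(t(s_0)),\varphi'(t(s_0))]}\,\bigl(\rho(s_0)-r\bigr), \]
where one uses $k_m(s)=k_m^S(t(s))\,k_c(s)$, so that $k_m^S(t(s_0))/k_m(s_0)=k_c(s_0)^{-1}=\rho(s_0)$. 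Since $k_m(s_0)\neq 0$ (as $k_c$ does not vanish) and $[\varphi(t(s_0)),\varphi'(t(s_0))]>0$, we get $f''(s_0)=0$ iff $r=\rho(s_0)$, i.e.\ iff $a=\gamma(s_0)-\rho(s_0)\varphi(t(s_0))=\xi(s_0)$ is the center of curvature; cf.\ (\ref{evolute}).

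The main obstacle is the bookkeeping in $f''$: one must carry through the two a priori possible signs of $r$ (whether $a$ lies on the ``inner'' or the ``outer'' ray of the left-normal line) and the various normalizing factors ($\|c(s_0)\|=|r|$, $w'(s_0)=1/r$, the determinant $[\varphi,\varphi']$, the anti-normalizations of $n_\gamma$ and of $N(s_0)$), all of which must conspire to cancel and to merge the two cases into one formula. The real content lies in the two auxiliary identities used above — the formula for $\tfrac{d}{dt}(\varphi'/[\varphi,\varphi'])$ and the relation $k_m=(k_m^S\!\circ t)\,k_c$ — each of which follows by a routine computation from the facts that $\varphi$ is an arc-length parametrization of $S$ and that twice-the-sector-area parametrization is the anti-norm arc-length parametrization (Section~\ref{secdefmink}). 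A more conceptual alternative would be to observe that $f'(s_0)=f''(s_0)=0$ says precisely that the Minkowski circle through $\gamma(s_0)$ centered at $a$ has second-order contact with $\gamma$ there, and then to invoke the uniqueness of the osculating circle discussed in Section~\ref{secdefmink}; I would mention this but present the direct computation for completeness.
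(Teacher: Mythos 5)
Your proof is correct, but it reaches both equivalences by a different mechanism than the paper. For the first equivalence the paper argues conceptually: writing $f=D_a\circ\gamma$ with $D_a(x)=\|x-a\|^2$, it observes that the level curves of $D_a$ are the Minkowski circles centred at $a$, so $f'(s_0)=0$ exactly when $\gamma'(s_0)$ lies in the kernel of the derivative of $D_a$, i.e.\ is tangent to the circle centred at $a$ through $\gamma(s_0)$, which is precisely $\gamma(s_0)-a\dashv_B\gamma'(s_0)$; you instead extract the explicit formula $f'=2\|c\|\,[\gamma',N]$ from the dual representation $\|v\|=[v,n_v]$ of the norm. For the second equivalence the paper never computes $f''$ as a number: it writes $\gamma(s)-a=g(s)\varphi(\theta(s))$ with $g>0$ (absorbing your sign $\mathrm{sgn}(r)$ into the angle $\theta$), differentiates this \emph{vector} identity twice, compares coefficients against $\gamma''=k_c\,\frac{d^2\varphi}{dt^2}$ in the basis $\{\varphi,\frac{d\varphi}{dt}\}$, and only uses $f=g^2$ to translate $g''(s_0)=0$ into $f''(s_0)=0$. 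Your route buys more: the closed form $f''(s_0)=\frac{2k_m(s_0)}{[\varphi,\varphi']}\,(\rho(s_0)-r)$ also records on which side of the centre of curvature the point $a$ sits, which the proposition does not ask for, at the price of the two auxiliary identities and the sign bookkeeping you flag. (Both identities do check out: $\frac{d}{dt}\bigl(\varphi'/[\varphi,\varphi']\bigr)=-k_m^S\,\varphi/[\varphi,\varphi']^2$ follows by expanding in the basis $\{\varphi,\varphi'\}$, the relation $k_m=(k_m^S\circ t)\,k_c$ follows from $\gamma''=t'\frac{d^2\varphi}{dt^2}$, and your final formula reduces correctly to $2k(\rho-r)$ in the Euclidean case.) One practical advantage of the paper's bookkeeping is that the quantities it introduces ($g$, $\theta$, and the values $g'(s_0)=0$, $g(s_0)\theta'(s_0)=1$, $\theta''(s_0)=0$) are reused verbatim in the two subsequent propositions on $A_{\geq 3}$ and $A_3$ singularities, whereas your computation would have to be redone at higher order there. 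Your closing remark about second-order contact with the osculating circle is indeed the geometric content, but as a proof it would be circular here, since identifying second-order contact with $f'(s_0)=f''(s_0)=0$ is essentially the computation being asked for.
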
 
\begin{proof} Consider the function $D_a:X\rightarrow \mathbb{R}$ given by $D_a(x) = ||x-a||^2$. It is clear that its level curves are the Minkowski circles centered at $a$. This means that the kernel of the derivative of $D_a$ at a given point $b \in X\setminus\{a\}$ is given by the tangent direction at $b$ to the circle with center in $a$ and passing through $b$. Since $f(s) = D_a\circ \gamma$, it follows that $f'(s_0) = 0$ if and only if $\gamma'(s_0)$ points in the tangent direction at $\gamma(s_0)$ to the circle with center $a$ and passing through $\gamma(s_0)$. But this is the same as stating that $\gamma(s_0) - a \dashv_B \gamma'(s_0)$. \\

We come now to the other claim. We assume that $f'(s_0) = 0$ (and, consequently, that $\gamma(s_0) - a \dashv_B \gamma'(s_0)$) and prove that, in this case, $f''(s_0) = 0$ if and only if $a$ is the center of curvature of $\gamma$ at $\gamma(s_0)$. Since $a \notin \gamma$, at least locally we may write
\begin{align*} a = \gamma(s) - g(s)\varphi(\theta(s)), 
\end{align*}
where $g$ is a positive function, and $\theta$ is the same arc-length parameter as $t$. We just choose another letter to make the due distinction between them. Namely, while $t(s)$ is the function for which $\gamma'(s) = \frac{d\varphi}{dt}(t(s))$, the function $\theta(s)$ is such that $\varphi(\theta(s))$ points in the (oriented) direction of $\gamma(s) - a$. Differentiating, we have
\begin{align*} \gamma'(s) = g'(s)\varphi(\theta(s)) + g(s)\theta'(s)\frac{d\varphi}{d\theta}(\theta(s)).
\end{align*}
Hence, at $s_0$ we must have $g'(s_0) = 0$, $g(s_0)\theta'(s_0) = 1$ and $\theta(s_0) = t(s_0)$. Differentiating again and evaluating at $s_0$, we get
\begin{align*} \gamma''(s_0) = g''(s_0)\varphi(t(s_0)) + g(s_0)\theta''(s_0)\gamma'(s_0) + \theta'(s_0)\frac{d^2\varphi}{dt^2}(t(s_0)).
\end{align*}
Now remember that $\gamma''(s_0) = k_c(s_0)\frac{d^2\varphi}{dt^2}(t(s_0))$. It follows that $g''(s_0) = 0$ if and only if $\theta'(s_0) = k_c(s_0)$. On the other hand, the condition $g''(s_0) = 0$ is equivalent to $f''(s_0) = 0$, since we clearly have $f(s) = g(s)^2$. Also, $\theta'(s_0) = k_c(s_0)$ if and only if $g(s_0) = k_c(s_0)^{-1}$. Since $g(s) = ||\gamma(s) - a||$, we have the desired. 

\end{proof}

\begin{remark}\label{remarkevo} Notice that, in the last calculation, whenever the point $a$ is the center of curvature of $\gamma$ at $\gamma(s_0)$ we also have $\theta''(s_0) = 0$. \\
\end{remark}

We shall go a little further in this direction. A point $\gamma(s)$ of $\gamma$ is a \emph{vertex} if $k_c(s) \neq 0$ and $k_c'(s) = 0$. A vertex is called an \emph{ordinary vertex} if we also have $k_c''(s) = 0$. Analogously to the Euclidean case (see \cite{izumiya}), we have the following

\begin{prop} Let $f(s) = ||\gamma(s)-a||^2$ be the squared distance function of $\gamma$ with respect to $a$. Then we have $f'(s_0) = f''(s_0) = f'''(s_0) = 0$ (i.e., $f$ has a singularity of type $A_{\geq 3}$ at $s_0$) if and only if the point $a$ is the center of curvature of $\gamma$ at $\gamma(s_0)$ and $\gamma(s_0)$ is a vertex of $\gamma$. 
\end{prop}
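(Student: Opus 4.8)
The plan is to continue the computation from the proof of the previous proposition one order further. Write $f(s)=g(s)^2$ with $g(s)=\|\gamma(s)-a\|$, so that $f'=2gg'$, $f''=2(g')^2+2gg''$ and $f'''=6g'g''+2gg'''$. By the previous proposition, the pair of conditions $f'(s_0)=f''(s_0)=0$ is equivalent to $a$ being the center of curvature of $\gamma$ at $\gamma(s_0)$; under this hypothesis one has $g'(s_0)=g''(s_0)=0$, $g(s_0)\theta'(s_0)=1$, $\theta'(s_0)=k_c(s_0)$, $\theta(s_0)=t(s_0)=:t_0$, and, by Remark~\ref{remarkevo}, also $\theta''(s_0)=0$. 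In particular $g(s_0)\neq 0$ and $k_c(s_0)\neq 0$, so $f'''(s_0)=2g(s_0)g'''(s_0)$, and $\gamma(s_0)$ is a vertex precisely when $k_c'(s_0)=0$. Hence the whole statement reduces to the claim that, under these hypotheses, $g'''(s_0)=0$ if and only if $k_c'(s_0)=0$; both implications of the proposition then follow at once from this equivalence together with the reduction above.

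To establish the claim I would compute $\gamma'''(s_0)$ in two ways. First, differentiating $\gamma(s)=a+g(s)\varphi(\theta(s))$ three times and evaluating at $s_0$, the relations above annihilate most terms and leave $\gamma'''(s_0)=g'''(s_0)\varphi(t_0)+g(s_0)\theta'''(s_0)\tfrac{d\varphi}{dt}(t_0)+k_c(s_0)^2\tfrac{d^3\varphi}{dt^3}(t_0)$, where I used $g(s_0)\theta'(s_0)^3=k_c(s_0)^2$. Second, differentiating $\gamma'(s)=\tfrac{d\varphi}{dt}(t(s))$ twice gives $\gamma'''(s_0)=k_c'(s_0)\tfrac{d^2\varphi}{dt^2}(t_0)+k_c(s_0)^2\tfrac{d^3\varphi}{dt^3}(t_0)$. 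Subtracting, the third-order terms cancel and we are left with $g'''(s_0)\varphi(t_0)+g(s_0)\theta'''(s_0)\tfrac{d\varphi}{dt}(t_0)=k_c'(s_0)\tfrac{d^2\varphi}{dt^2}(t_0)$.

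Since $S$ is strictly convex, $\{\varphi(t_0),\tfrac{d\varphi}{dt}(t_0)\}$ is a basis of $X$, so I would write $\tfrac{d^2\varphi}{dt^2}(t_0)=A\,\varphi(t_0)+B\,\tfrac{d\varphi}{dt}(t_0)$ and compare coefficients, obtaining $g'''(s_0)=A\,k_c'(s_0)$ (and $g(s_0)\theta'''(s_0)=B\,k_c'(s_0)$). It then remains to check $A\neq 0$: in the arc-length parametrization $\tfrac{d\varphi}{dt}(t_0)\in S$, hence $\tfrac{d\varphi}{dt}(t_0)\dashv_B\tfrac{d^2\varphi}{dt^2}(t_0)$; and $\tfrac{d^2\varphi}{dt^2}(t_0)\neq 0$, since otherwise $\gamma''(s_0)=k_c(s_0)\tfrac{d^2\varphi}{dt^2}(t_0)$ would vanish, contrary to the standing assumption that the circular curvature does not vanish. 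Two nonzero Birkhoff-orthogonal vectors are never parallel, so $\tfrac{d^2\varphi}{dt^2}(t_0)$ has a nonzero component along $\varphi(t_0)$, i.e.\ $A\neq 0$. Therefore $g'''(s_0)=0$ if and only if $k_c'(s_0)=0$, which completes the argument.

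The only real difficulty is bookkeeping: carrying the threefold differentiation of $\gamma=a+g\,\varphi(\theta)$ without error, and keeping in mind that the auxiliary parameter $\theta$ and the circular parameter $t$ agree only to first order at $s_0$ (they share value and first derivative there, but $\theta''(s_0)=0$ while $t''(s_0)=k_c'(s_0)$), so that the two expressions for $\gamma'''(s_0)$ genuinely carry different information. Everything else is a coefficient comparison in a fixed basis. One should also note that the argument tacitly uses a $C^3$ unit circle and a $C^3$ curve, in accordance with the paper's convention that \emph{smooth} means ``differentiable enough for the derivatives in question to make sense''.
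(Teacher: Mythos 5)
Your proof is correct and follows essentially the same route as the paper's: reduce to the equivalence $g'''(s_0)=0\Leftrightarrow k_c'(s_0)=0$, compute $\gamma'''(s_0)$ in two ways, and compare the resulting identity in the basis $\{\varphi(t_0),\tfrac{d\varphi}{dt}(t_0)\}$. Your explicit verification that $\tfrac{d^2\varphi}{dt^2}(t_0)$ has a nonzero $\varphi(t_0)$-component (via Birkhoff orthogonality) is a detail the paper leaves implicit, but it is the same argument.
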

\begin{proof} It suffices to assume that $a$ is the center of curvature at $\gamma(s_0)$ and to show that $f'''(s_0) = 0$ if and only if $k_c'(s_0) = 0$ and $k_c''(s_0) \neq 0$. We shall use the same notation as in the last proposition. Recall that in this case we have $g'(s_0) = g''(s_0) = 0$, $\theta(s_0) = t(s_0)$, $g(s_0)\theta'(s_0) = 1$, $\theta'(s_0) = k_c(s_0)$ and $\theta''(s_0) = 0$ (see Remark \ref{remarkevo}). Using this information and differentiating $\gamma''(s)$ in two ways at $s_0$, we get
\begin{align*} k_c'(s_0)\frac{d^2\varphi}{dt^2}(t(s_0)) + k_c(s_0)^2\frac{d^3\varphi}{dt^3}(t(s_0)) = \gamma'''(s_0) = \\ = g'''(s_0)\varphi(t(s_0)) + g(s_0)\theta'''(s_0)\frac{d\varphi}{dt}(t(s_0)) + \theta'(s_0)^2\frac{d^3\varphi}{dt^3}(t(s_0)),
\end{align*}
and hence
\begin{align*} k_c'(s_0)\frac{d^2\varphi}{dt^2}(t(s_0)) = g'''(s_0)\varphi(t(s_0)) + g(s_0)\theta'''(s_0)\frac{d\varphi}{dt}(t(s_0)).
\end{align*}
It follows that $g'''(s_0) = 0$ if and only if $k_c'(s_0) = 0$. It is clear that the condition $g'''(s_0) = 0$ is equivalent to $f'''(s_0) = 0$. 

\end{proof}

So far, evolutes in a normed plane behave analogously as in the Euclidean plane when we have the viewpoint of singularities of squared distance functions. But we do not need to stop here. In the Euclidean plane, a point $\gamma(s_0)$ is an ordinary vertex of $\gamma$ if and only if $f'(s_0) = f''(s_0) = f'''(s_0) = 0$, but $f^{(4)}(s_0) \neq 0$ (in other words, $s_0$ is a singularity of type $A_3$ of $f$). In a normed plane we have the following

\begin{prop} Let $\gamma(s_0)$ be a vertex of $\gamma$, with associated center of curvature in the point $a \in X\setminus\{\gamma\}$. Let also, as usual, $f(s) = ||\gamma(s) - a||^2$. Then $\gamma(s_0)$ is an ordinary vertex if and only if $f$ has a singularity of type $A_3$ at $s_0$.
\end{prop}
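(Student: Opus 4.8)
The plan is to push the computation of the two preceding propositions one order higher. Since $\gamma(s_0)$ is assumed to be a vertex whose center of curvature is $a$, the previous proposition already gives $f'(s_0)=f''(s_0)=f'''(s_0)=0$, so what must be shown is that $f^{(4)}(s_0)\neq0$ exactly when $\gamma(s_0)$ is an ordinary vertex, i.e., when (in addition to $k_c'(s_0)=0$) we have $k_c''(s_0)\neq0$. Writing $f=g^2$ with $g(s)=\|\gamma(s)-a\|>0$ and recalling that $g'(s_0)=g''(s_0)=g'''(s_0)=0$, a direct expansion yields $f^{(4)}(s_0)=2g(s_0)\,g^{(4)}(s_0)$, so everything reduces to showing that $g^{(4)}(s_0)=0$ if and only if $k_c''(s_0)=0$.

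To this end I would, as in the earlier proofs, expand $\gamma$ near $s_0$ in two ways: via the unit-circle frame through $\gamma'(s)=\frac{d\varphi}{dt}(t(s))$, and via $\gamma(s)=a+g(s)\varphi(\theta(s))$, where $\theta$ is the angular parameter pointing toward $\gamma(s)-a$. Differentiating $\gamma''(s)=k_c(s)\frac{d^2\varphi}{dt^2}(t(s))$ twice and using $k_c'(s_0)=0$ gives, with $t_0:=t(s_0)$,
\begin{align*}
\gamma^{(4)}(s_0)=k_c''(s_0)\frac{d^2\varphi}{dt^2}(t_0)+k_c(s_0)^3\frac{d^4\varphi}{dt^4}(t_0).
\end{align*}
On the other hand, differentiating $\gamma(s)=a+g(s)\varphi(\theta(s))$ four times and inserting the data already established at $s_0$ — namely $g'(s_0)=g''(s_0)=g'''(s_0)=0$, $\theta(s_0)=t_0$, $g(s_0)\theta'(s_0)=1$, $\theta'(s_0)=k_c(s_0)$, $\theta''(s_0)=0$, and $\theta'''(s_0)=0$ (this last one being the tangential component of the vector identity used in the previous proof, once $k_c'(s_0)=0$ is substituted) — collapses all lower-order contributions and leaves
\begin{align*}
\gamma^{(4)}(s_0)=g^{(4)}(s_0)\,\varphi(t_0)+\frac{\theta^{(4)}(s_0)}{k_c(s_0)}\frac{d\varphi}{dt}(t_0)+k_c(s_0)^3\frac{d^4\varphi}{dt^4}(t_0),
\end{align*}
where the $\frac{d^4\varphi}{dt^4}$ terms agree because $g(s_0)\theta'(s_0)^4=\theta'(s_0)^3=k_c(s_0)^3$.

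Equating the two expressions, the $\frac{d^4\varphi}{dt^4}(t_0)$ terms cancel and one is left with
\begin{align*}
g^{(4)}(s_0)\,\varphi(t_0)+\frac{\theta^{(4)}(s_0)}{k_c(s_0)}\frac{d\varphi}{dt}(t_0)=k_c''(s_0)\frac{d^2\varphi}{dt^2}(t_0).
\end{align*}
Now I would decompose $\frac{d^2\varphi}{dt^2}(t_0)$ in the basis $\bigl\{\varphi(t_0),\frac{d\varphi}{dt}(t_0)\bigr\}$ and call $c_1$ its $\varphi(t_0)$-coefficient, so that $c_1=-[\frac{d\varphi}{dt}(t_0),\frac{d^2\varphi}{dt^2}(t_0)]\big/[\varphi(t_0),\frac{d\varphi}{dt}(t_0)]$; this is nonzero because the unit circle, being strictly convex and (sufficiently) differentiable, has no inflection points. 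Comparing $\varphi(t_0)$-components therefore gives $g^{(4)}(s_0)=c_1\,k_c''(s_0)$ with $c_1\neq0$, so $g^{(4)}(s_0)=0\Leftrightarrow k_c''(s_0)=0$, whence $f^{(4)}(s_0)=2g(s_0)g^{(4)}(s_0)\neq0$ precisely when $\gamma(s_0)$ is an ordinary vertex, i.e., when $f$ has an $A_3$ singularity at $s_0$.

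The substantive difficulty is purely the fourth-order bookkeeping — verifying that each term carrying $g'$, $g''$, $g'''$, $\theta''$, or $\theta'''$ indeed vanishes at $s_0$ and that the two $\frac{d^4\varphi}{dt^4}$ coefficients coincide; apart from that, the only geometric input beyond the two preceding propositions is the no-inflection property of the unit circle, which is what guarantees $c_1\neq0$.
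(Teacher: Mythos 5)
Your proposal is correct and takes essentially the same route as the paper: both arguments reduce the claim to the identity $g^{(4)}(s_0)\varphi(t(s_0)) + g(s_0)\theta^{(4)}(s_0)\frac{d\varphi}{dt}(t(s_0)) = k_c''(s_0)\frac{d^2\varphi}{dt^2}(t(s_0))$ (note $g(s_0)=k_c(s_0)^{-1}$, so your left-hand side is the same) and then compare $\varphi(t(s_0))$-components to get $g^{(4)}(s_0)=0 \Leftrightarrow k_c''(s_0)=0$. You merely make explicit the fourth-order bookkeeping (in particular $\theta'''(s_0)=0$ and the matching of the $\frac{d^4\varphi}{dt^4}$ coefficients) and the nondegeneracy condition $c_1\neq 0$, which the paper compresses into the assertion that any pair of the three vectors is linearly independent.
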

\begin{proof} If $\gamma(s_0)$ is a vertex of $\gamma$ and the point $a$ is the associated center of curvature, then we already have that $f$ has a singularity of type $A_{\geq 3}$ at $s_0$. In other words, we know that $f'(s_0) = f''(s_0) = f'''(s_0) = 0$. We shall prove that if the vertex is ordinary, then $f^{(4)}(s_0) \neq 0$. To do so, we first have to derivate $\gamma'''(s)$ and to use the informations on $g$, $k_c$, $\theta$ and their derivatives (at $s_0$) to obtain the equality
\begin{align*} g^{(4)}(s_0)\varphi(t(s_0)) + g(s_0)\theta^{(4)}(s_0)\frac{d\varphi}{dt}(t(s_0)) = k_c''(s_0)\frac{d^2\varphi}{dt^2}(t(s_0)).
\end{align*} 
Now, since any pair of the vectors above is linearly independent, it follows that $k_c''(s_0) \neq 0$ if and only if $g^{(4)}(s_0) \neq 0$. Since $g'(s_0) = g''(s_0) = g'''(s_0) = 0$, we have that $g^{(4)}(s_0) \neq 0$ if and only if $f^{(4)}(s_0) \neq 0$. 

\end{proof}

From (\ref{evolutederivative}) we have that the evolute of a regular curve is regular, except at points associated to the vertices of $\gamma$. We shall take a more careful look at the singularities of the evolute (i.e., the points where $\xi'(s) = 0$). A curve $\sigma:J\rightarrow X$ has an \emph{ordinary cusp} at $t_0 \in J$ if $\sigma'(t_0) = 0$, and $\sigma''(t_0)$ and $\sigma'''(t_0)$ are linearly independent. In the Euclidean plane, the evolute has ordinary cusps at points corresponding to ordinary vertices of $\gamma$, and this is true also in a normed plane. The proof consists of derivating $\xi'(s)$ twice and showing that if $s_0$ is an ordinary vertex of $\gamma$. Then the vector $\xi''(s_0)$ must have a component in the direction of $\varphi(t(s_0))$, and the vector $\xi'''(s_0)$ necessarily has a component in the direction of $\frac{d\varphi}{dt}(t(s_0))$. \\

We would like to regard cusps of the evolute from a geometric point of view. To do so, let $\gamma(s_0)$ be an ordinary vertex of $\gamma$, and assume that $s_0 \in (0,c)$. Then we know that $\xi'(s_0) = 0$. Since $\xi$ is the envelope of the left-normal lines of $\gamma$, it follows that both limits
\begin{align*} \lim_{s\rightarrow s_0^{\pm}}\frac{\xi'(s)}{||\xi'(s)||} 
\end{align*} 
exist and point in the direction of $\varphi(t(s_0))$. On the other hand, notice that 
\begin{align*}[\xi'(s),\varphi(t(s_0))] = -\rho'(s)\left[\frac{d\varphi}{dt}(t(s)),\varphi(t(s_0))\right],
\end{align*}
and since $\gamma(s_0)$ is an ordinary vertex, it follows that $\rho'(s)$ changes its sign at $s_0$. Therefore, $[\xi'(s),\varphi(t(s_0))]$ changes its sign at $s_0$, and this yields
\begin{align*} \lim_{s\rightarrow s_0^+}\frac{\xi'(s)}{||\xi'(s)||} = - \lim_{s\rightarrow s_0^-}\frac{\xi'(s)}{||\xi'(s)||}. 
\end{align*}
The geometric meaning of this is that the evolute \emph{changes its orientation} in an ordinary cusp (see Figure \ref{cusp}). This can be understood in view of (\ref{evolutederivative}), since the sign of $\rho'(s)$ changes when we pass through an ordinary cusp. As a consequence, we have that here, as in the Euclidean case, the \emph{signed} length of the evolute of a closed curve with positive curvature is zero. Indeed, 
\begin{align*} \int_0^c-\rho'(s)ds = -\rho(c) + \rho(0) = 0.
\end{align*}
This means that the two sums of the lengths of the respective portions intercalated by the cusps are equal. 

\begin{figure}[h]
\centering
\includegraphics{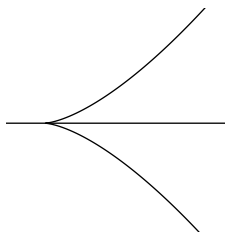}
\caption{An ordinary cusp.}
\label{cusp}
\end{figure}

A curve $\eta:[0,c]\rightarrow X$ is said to be an \emph{involute} of a curve $\gamma:[0,c]\rightarrow X$ if $\gamma$ is the evolute of $\eta$. We follow \cite{craizer} to find the involutes of a curve in a normed plane. Assume that $\gamma(s)$ is a curve parametrized by arc length and, to avoid any confusion, let $u$ be an arc-length parameter in the unit circle. Let $u(s):[0,c]\rightarrow [0,l(S)]$ be such that $\gamma'(s) = \varphi(u(s))$ for each $s \in [0,c]$. We claim that the curve $\eta:[0,c]\rightarrow X$ given by
\begin{align}\label{involute} \eta(s) = \gamma(s) + (c-s)\varphi(u(s)),
\end{align}
where $c \in \mathbb{R}$ is a constant, is an involute of $\gamma$. To check this, notice that
\begin{align*} \eta'(s) = \gamma'(s) - \varphi(u(s))+(c-s)\frac{d}{ds}\varphi(u(s)) = (c-s)\frac{d}{ds}\varphi(u(s)).
\end{align*}
Hence, $(c-s)$ is the curvature radius of $\eta$ at $\eta(s)$. Moreover, the left-normal direction to $\eta$ at $\eta(s)$ is $\varphi(u(s))$. Now, by the definition we have that the evolute $\xi_{\eta}$ of $\eta$ is given by
\begin{align*} \xi_{\eta}(s) = \eta(s) - (c-s)\varphi(u(s)) = \gamma(s).
\end{align*}
Actually, this construction yields every involute of $\gamma$. This is explained in the next proposition.
\begin{prop} Let $\gamma:[0,c]\rightarrow X$ be a curve parametrized by arc length, and let $u(s):[0,c]\rightarrow [0,l(S)]$ be as before. Then any involute of $\gamma$ must be of the form \emph{(\ref{involute})} for some constant $c \in \mathbb{R}$. 
\end{prop}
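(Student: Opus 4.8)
The plan is to recover $\eta$ from $\gamma$ using the characterization of the evolute as the envelope of left-normal lines, which was verified via formula (\ref{evolutederivative}). Suppose $\eta$ is an involute of $\gamma$, so that $\gamma$ is the evolute of $\eta$, with the parameter $s$ identified in such a way that $\gamma(s)$ is the curvature centre of $\eta$ at the corresponding point $\eta(s)$. Applying (\ref{evolutederivative}) to $\eta$ shows that, away from vertices of $\eta$, the tangent line of $\gamma$ at $\gamma(s)$ is exactly the left-normal line of $\eta$ at $\eta(s)$; by continuity this persists through the (isolated) vertices. Since $\gamma$ is parametrized by arc length, this tangent line passes through $\gamma(s)$ with direction $\varphi(u(s))$, where $\gamma'(s) = \varphi(u(s))$. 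Hence $\eta(s)$ lies on it, and we may write
\begin{align*} \eta(s) = \gamma(s) + g(s)\varphi(u(s))
\end{align*}
for some function $g:[0,c]\rightarrow\mathbb{R}$ (after, if necessary, reorienting $\eta$ so that its left-normal direction is $\varphi(u(s))$ rather than its antipode; in a strictly convex plane these are the only two possibilities).

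First I would differentiate this relation. Using $\gamma'(s) = \varphi(u(s))$ and $\frac{d}{ds}\varphi(u(s)) = u'(s)\frac{d\varphi}{du}(u(s))$, one obtains
\begin{align*} \eta'(s) = (1 + g'(s))\varphi(u(s)) + g(s)u'(s)\frac{d\varphi}{du}(u(s)).
\end{align*}
The vectors $\varphi(u(s))$ and $\frac{d\varphi}{du}(u(s))$ are linearly independent, since $\varphi(u(s)) \dashv_B \frac{d\varphi}{du}(u(s))$ and no nonzero vector is Birkhoff orthogonal to itself. On the other hand, as the left-normal line of $\eta$ at $\eta(s)$ has direction $\varphi(u(s))$, the tangent direction $\eta'(s)$ must be the unique direction Birkhoff orthogonal to $\varphi(u(s))$ (uniqueness holds because the plane is smooth), namely that of $\frac{d\varphi}{du}(u(s))$. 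Comparing coefficients in the expression above, the $\varphi(u(s))$-component of $\eta'(s)$ must vanish, i.e. $1 + g'(s) = 0$.

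Then I would conclude: $g'(s) \equiv -1$ forces $g(s) = c - s$ with $c := g(0)$, so that $\eta(s) = \gamma(s) + (c-s)\varphi(u(s))$, which is precisely the form (\ref{involute}). Combined with the earlier direct verification that every curve of this form is indeed an involute of $\gamma$, this shows that (\ref{involute}) describes exactly the involutes of $\gamma$.

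The main obstacle I expect is the bookkeeping at the very start: justifying the identification of the parameter of $\eta$ with that of $\gamma$, and the claim that the tangent line of $\gamma$ coincides with the left-normal line of $\eta$ at corresponding points — including the orientation convention ($\varphi(u(s))$ versus its antipode, resolved by reorienting $\eta$) and the behaviour at vertices of $\eta$, where the evolute $\gamma$ acquires a cusp and the naive envelope description degenerates; there one falls back on continuity together with the fact that such vertices are isolated. Everything after that reduces to the short computation above.
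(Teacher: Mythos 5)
Your argument is correct and is essentially the paper's own proof viewed from the other end: the paper writes $\eta'(s)=f(s)\frac{d}{ds}\varphi(u(s))$ first and then $\gamma(s)=\eta(s)-f(s)\varphi(u(s))$, while you write $\eta(s)=\gamma(s)+g(s)\varphi(u(s))$ first and then use Birkhoff orthogonality of $\eta'(s)$ to $\varphi(u(s))$ to kill the tangential component; both reduce to the same differentiation yielding $g'\equiv -1$. The extra care you take about parameter identification, orientation of the left normal, and vertices is reasonable but not needed beyond what the paper's envelope discussion already provides.
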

\begin{proof} Let $\eta$ be an involute of $\gamma$. Since $\gamma$ is the envelope of the left-normal lines of $\eta$, it follows that $\gamma'(s) \dashv_B \eta'(s)$ for any $s \in [0,c]$. Therefore, we may write $\eta'(s) = f(s)\frac{d}{ds}\varphi(u(s))$ for some function $f:[0,c]\rightarrow \mathbb{R}$. The evolute $\gamma$ of $\eta$ must then be given by $\gamma(s) = \eta(s) - f(s)\varphi(u(s))$. Differentiating, we have
\begin{align*} \varphi(u(s)) = \gamma'(s) = \eta'(s) - f'(s)\varphi(u(s)) - f(s)\frac{d}{ds}\varphi(u(s)) = -f'(s)\varphi(u(s)),
\end{align*}
and hence $f'(s)$ equals $-1$. It follows that $f(s) = c-s$ for some constant $c \in \mathbb{R}$.

\end{proof}

Let $\gamma$ be a regular curve. As usual, assume that $s$ is an arc-length parameter in $\gamma$, and that $\gamma'(s) = \frac{d\varphi}{dt}(t(s))$. The curve obtained by moving all the points of $\gamma$ at the same distance along each corresponding left-normal field is called a \emph{left parallel} of $\gamma$. Formally, a left parallel of $\gamma$ is a curve of the form
\begin{align*} \gamma_d(s) = \gamma(s) + d\varphi(t(s)),
\end{align*}
for some constant $d \in \mathbb{R}$. Notice that all the involutes of a given curve are left parallels of the same curve. Also, since $\gamma_d'(s) = (1+dk_c(s))\gamma'(s)$, we have that a left parallel is singular when $d = -k_c(s)^{-1}$. It follows that the evolute of $\gamma$ is precisely the curve whose points are the singularities of the left parallels of $\gamma$ (as in the Euclidean case, see \cite{izumiya}). \\

Of course, \emph{our little theory above can be completely analogized for anti-norms, just replacing the circular curvature by the normal curvature, and the left-normal line field by the right-normal line field.}

\section{Summary and related topics}

When studying the geometry of general normed planes, it is natural to investigate the differences to and the similarities with the Euclidean subcase. Even more, the concepts defined here are geometrically inspired or come directly from the analogous Euclidean concepts. We have shown how to obtain three (a priori) different curvature types by extending (to normed planes) three different ways of calculating or presenting the Euclidean curvature notion. In addition, we introduced a new kind of curvature motivated by analogous reasons and finally turning out as the Minkowski curvature of the anti-norm. This completes the whole framework also regarding duality concepts.\\

Therefore, natural questions may be posed, e.g. for geometric properties of each of these curvature types. We show, for example, that the curves of constant circular curvature are forming arcs of circles, and that the curves of constant normal curvature are forming arcs of anti-circles (the curves of constant Minkowski curvature are classified, too). This implies, in particular, the geometric fact that these are the only curves such that all the (left and right, respectively) normal lines meet at a point. We show also that each curvature type is invariant under an isometry of the plane, and that continuous functions over closed intervals can always be regarded as the curvature function (of any type) of a certain curve, which is unique up to the initial data. \\

Searching properties of curvature types that can characterize Radon planes, or the Euclidean plane, we proved that circular and normal curvature types coincide if and only the plane is Radon, and that if the Minkowski and normal curvature types coincide, then the norm is Euclidean. Even more, we gave a proof for the fact (mentioned, but not proved, by Petty \cite{Pet}) that if the Minkowski curvature of a circle is constant, then the plane must be Euclidean. \\

Using the same technique that Petty \cite{Pet} used for the Minkowski curvature, we proved that the four vertex theorem holds for all curvature concepts, also the new one. Still studying the curvature types for closed curves, we could show, as a consequence of a stronger result, that a simple, closed and strictly convex curve must be contained within the region bounded by its largest osculating circle, and must contain its smallest osculating circle. The methods to show this were also used to prove that a curve of constant width divided into two portions of equal length by any pair of opposite points must be a circle. \\

When studying evolutes, involutes and parallels, we adopted the modern approach of characterizing them by singularities of squared distance functions. It is somehow surprising how the things work in a way very similar to the Euclidean one. We emphasize that these results are also a good starting point for studying differential geometry of curves and surfaces in normed spaces of higher dimensions. \\

We believe that studying the geometry of normed spaces from the viewpoint of differential geometry will not only raise interesting geometric questions referring to these spaces, but can also help to achieve a better understanding regarding the objects we are dealing with in the Euclidean case. Of course, when one is willing to study systematically some rich subject within a different context, the questions to be raised and further directions of research are very numerous. Among them, we may cite, for example, the converse of the four vertex theorem (proved for the Euclidean case in \cite{gluck}, see also \cite{deturck}), which seems to be hard to tackle in normed planes. Further on, it seems that no analogue to Schur's theorem (see \cite[p. 406]{manfredo}) is proved for normed planes. \\

As for the case of planes, one can find in the literature also curvature concepts for normed spaces of higher dimensions (see, e.g., \cite{biberstein}, \cite{Bus3} and \cite{Gug2}). The notions and approaches used there (and also in various further papers) are, in general, regarding methods and objectives different to those presented here. A systematic treatment of this more general subject is analogously missing, and it is clearly a topic of forthcoming work. Another direction of future research is the extension of results presented here (or existing for higher dimensions) to non-symmetric distances or gauges, see, e.g., \cite{Gug2} and \cite{Ja-Ma-Ri}.

\bibliography{bibliography.bib}

\end{document}